\documentclass[11pt openany]{article}

\usepackage{graphics,psfrag,graphicx,subfigure,epsfig}
\usepackage{amsmath,amsfonts,amssymb,amstext,amsthm,amscd,mathrsfs}
\usepackage[spanish,english]{babel}
\usepackage[all]{xy}
\usepackage{multicol}
\usepackage{rotating}

\theoremstyle{definition}
\newtheorem{defi}{Definition}
\newtheorem{obs}{Observation}

\newtheorem{rmk}{Remark}

\theoremstyle{plain}
\newtheorem{theo}{Theorem}
\newtheorem{coro}{Corollary}
\newtheorem{prop}{Proposition}
\newtheorem{lema}{Lemma}
\newtheorem{con}{Conjecture}

\newcommand{\noi}{{\noindent}}
\newcommand{\su} {\mathbf{U}}
\newcommand{\gl} {\mathbf{GL}}
\newcommand{\Z}{\mathbb{Z}}
\newcommand{\C}{\mathbb{C}}
\newcommand{\R}{\mathbb{R}}
\newcommand{\CP}{\mathbb{CP}}
\newcommand{\T} {\mathbb{T}}
\newcommand{\vdu}{\mathcal{N}}
\newcommand{\vd} {\mathcal {M}}
\newcommand{\lv} {\mathbf {LV}}
\newcommand{\mer}{\mathbf {M}}
\newcommand{\ps}{\mathbf {PS}}

\newcommand{\Lam}{\pmb{\Lambda}}
\newcommand{\ZLam}{Z^{^\C}(\Lam)}
\newcommand{\ZLamsu}{{\pmb{Z}}^{^\C}_{(\Lam,n,1)}}
\newcommand{\ZLamss}{{\pmb{Z}}^{^\C}_{(\Lam,n,s)}}

\newcommand{\ZLamp}{Z^{^\C}_{_0}(\Lam)}
\newcommand{\ZLamd}{Z^{^\C}_{_+}(\Lam)}

\newcommand{\ZLamu}{Z^{^\C}(\Lam')}
\newcommand{\s}{\mathbb{S}}
\newcommand{\disc}{\mathbb{D}}
\newcommand{\tange} {\mathbf{T}}
\newcommand{\p} {\mathcal{P}}
\newcommand{\K} {\mathcal{K}}
\newcommand{\vt} {\mathbf{N}}
\newcommand{\fol} {\mathcal{F}}
\newcommand{\te}{\tilde{\mathcal{E}}}
\newcommand{\cv}{\mathcal{X}}
\newcommand{\U} {\mathcal{U}}

\newcommand{\V} {\mathcal{V}}
\newcommand{\ls}{\mathcal{L}}

\begin{document}
\title{Open Book Structures on Moment-Angle Manifolds $\ZLam$ and Higher Dimensional Contact Manifolds.\thanks{Partially supported by project PAPIIT-DGAPA IN100811 and IN108112 and project CONACyT 129280.}}
\author{Yadira Barreto, Santiago L\'opez de Medrano and \\ Alberto Verjovsky }
\date{}

\maketitle

\begin{abstract}
\noi We construct  open book structures on moment-angle ma\-ni\-folds\- and give a new construction of examples of contact manifolds in arbitrarily large dimensions.
\end{abstract}

\noi {\bf Key Words:} Open book decomposition, moment-angle manifolds,  contact structures.

\section{Introduction}
The topology of generic intersections of quadrics in $\R^n$ of the form:
$$
\sum_{i=1}^n\lambda_ix^2_i=0,\quad\quad\sum_{i=1}^n x^2_i=1,
$$
where $\lambda_i\in\R^k$, $i=1,\dots,n$ has been studied for many years: For $k=2$ they are  diffeomorphic to a triple product of spheres or to the connected sum of sphere products (\cite{SLM}, \cite {GL}); for $k>2$ this is no longer the case (\cite{BBCG}, \cite{BM}) but there are large families of them which are again connected sums of spheres products (\cite{LoGli}).\\

\noi  Let $\Lam=\left(\lambda_1,\cdots, \lambda_n  \right)$.
The generic condition, known as \emph{weak hyperbolicity} and equivalent to regularity, is the following:\\

\noi\emph{If $J\subset{1,\dots,m}$ has $k$ or fewer elements then the origin is not in the convex hull of the $\lambda_i$ with $i\in J$.}\\

\noi A crucial aspect of these varieties is that they admit natural group actions. All of them admit $\Z^{^n}_2$ actions. Their complex versions in $\C^n$

$$
\sum_{i=1}^n\lambda_i|z_i|^2=0,\quad\quad \sum_{i=1}^n|z_i|^2=1,
$$
\noi (now known as \emph{moment angle manifolds}) admit natural $\T^n$ actions. The quotient is in both cases a polytope $\p$ that determines completely the ma\-ni\-folds\- and the actions. We will use both notations  $Z=Z(\Lam)=Z(\p)$ and $Z^{^\C}=\ZLam=Z^{^\C}(\p)$ for these manifolds\footnote{The facets of the polytope correspond with the intersections of the manifolds with the coordinate hyperplanes. Technically, we would have to add the datum $n$ to the notations $Z(\p)$ and $Z^{^\C}(\p)$ to specify that some \textit{facets} may be empty. In Theorem 1 this would mean that the binding might be empty. In all our examples this will be explicit.}. They both fall under the general concept of \emph{generalized moment angle complexes} (\cite{BP} and \cite{BBCG}).\\

\noi An open book construction with these manifolds was used to provide a des\-crip\-tion of the topology of some cases not covered by the main theorem in \cite{SLM} (see remark on page 281). In \cite{LoGli} it is a principal technique for proving the results about some families when $k>2$.\\

\noi In section 2 we recall this construction, underlining its consequences for moment-angle manifolds:\\

\noi If $\p$ is a simple convex polytope and $F$ is one of its facets, there is an open book decomposition of $Z^{\C}(\p)$ with binding $Z^{^\C}(F)$ and  trivial mo\-no\-dro\-my.\\

\noi When $k=2$, we also give a topological description of the binding and leaves of the decomposition, in terms of the \emph{odd cyclic partitions} of $n$ that classify the varieties\footnote{See section 7.}. This description is almost complete in the case of moment-angle manifolds for $k=2$: The leaf is the interior of a manifold that can be:\\

\begin{itemize}
\item [a)] a product $\s^{2n_2-1}\times\s^{2n_3-1}\times\disc^{2n_1-2}$,

\item [b)] a connected sum along the boundary of products  of the form 
$\s^p\times\disc^{2n-p-4}$,

\item [c)] in some cases, in the connected sum there may appear summands of the form $\s^{2p-1}\times\s^{2n-2p-3}\backslash\disc^{2n-4}$ or of 
the exterior of an embedded $\s^{2q-1}\times\s^{2r-1}$ in $\s^{2n-4}$.
\end{itemize}

\noi The specific products that appear in the above formulae will be described precisely in \ref{pageC} in section 3. The proofs will be postponed to section 7 where they will follow from a more general theorem for the corresponding real manifolds that requires, as in \cite{SLM}, additional dimensional and connectivity hypotheses. The  corresponding result in the real case amounts to the topological description of the \emph{half} manifolds $Z_{_+}=Z\cap\{x_1\geqslant 0\}$, complementing the results in \cite{SLM}.  Parts of these theorems follow directly from the results in \cite{SLM}, other parts require the use of many of the technical lemmas proved there. All these manifolds with boundary are also generalized moment-angle complexes.\\

\noi In section 4 we recall some results in \cite{MeerVerj1} which state that the $\lv$-$\mer$ complex manifolds $\vdu(\Lam)$ which correspond to configurations $\Lam$ which satisfy an arithmetic condition (Condition ({\bf K}))  fibre (possibly as Seifert fibrations) over toric varieties (or orbifolds with simple singularities) with fibre a complex torus. From this we conclude that the pages of the open books described in section 2 are complex manifolds of the form $\vdu(\Lam)-\vdu(\Lam_0)$ where
$\Lam_0$ is obtained from $\Lam$ by suppressing one element of the configuration.\\

\noi In section 5 we show that every moment-angle manifold $\ZLam$ admits an almost contact structure and, as a consequence, these manifolds also admit a quasi-contact structure. We recall that there is the conjecture \cite{MMP} that every almost contact manifold is in fact a contact manifold.\\ 

\noi In section 6 we describe a new construction of compact contact manifolds $\ZLamss$ in arbi\-tra\-ri\-ly large odd dimensions. 
These manifolds are generalizations of the moment-angle manifolds $\ZLam$ that have been studied by V. G\'omez Guti\'errez and the second author \cite{GL}.\\

\noi Even dimensional moment-angle manifolds can be given complex structures but admit symplectic structures only in a few well-known cases \cite{Meer} so it is surprising that many of the odd dimensional ones admit contact structures. In this respect we remark that using results of C. Meckert \cite{Meckert} and Y. Eliashberg \cite{Elias} it is possible to give a contact structure on these manifolds.  See remark \ref{rmk4} in section 6.\\

\noi However our method is distinct and in some sense explicit without using open books. What we use is the heat flow method described in \cite{AltWu}. In other words, we construct a positive confoliation $\alpha$ which is conductive i.e. every point of the manifold can be connected by a suitable Legendrian curve to a point where the form is positive (see definition 4 in 6.3 in section 6).\\

\noi In section 7 we recall some old results about the topology of intersections of quadrics and use them to proof a new one from which theorem \ref{pageC} of section 2 follows.

\section{Open book decomposition}

Let us recall the concept of an \emph{open book}, introduced by H. E. Winkelnkemper in 1973 (see \cite{Win}). An open book in a smooth manifold $V$ is a pair $(\K,f)$ consisting of the following:

\begin{itemize}
\item [a)]  A proper submanifold $\K$  of codimension two in $V$ with trivial normal bundle, so $\K$  admits  a neighborhood $\vt$ diffeomorphic to $\disc^2\times\K$.

\item [b)] A locally trivial smooth fibration $f:V\backslash\K\to \s^1$ such that there exists a neighborhood $\vt$ of $\K$ as above, in which $f$ is the normal angular coordinate.
\end{itemize}
\noi The submanifold $\K$ is called the \emph{binding} and the 
fibres of $f$ are the \emph{pages} of the open book $(\K,f)$.\\

\begin{center}
\begin{figure}[h]
\centerline{\includegraphics[height=4cm]{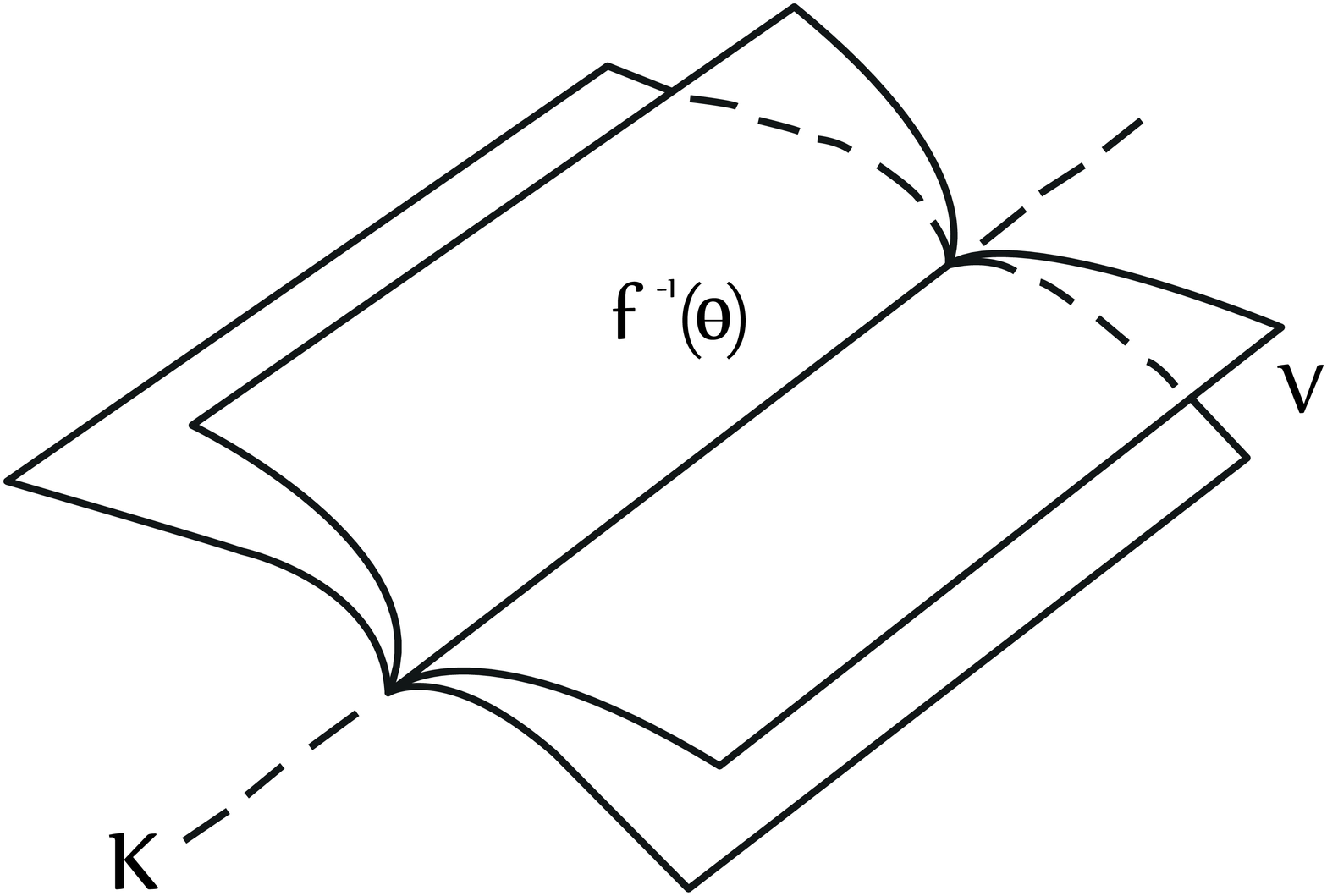}}
\end{figure}
\end{center}   

\noi Also we can define an open book as follows: Let $F$ be a manifold with a boun\-da\-ry and $\phi$ a self diffeomorphism of $F$ that is the identity on $\partial F$. The mapping torus $\Sigma(F,\phi)$ of $\phi$, i.e., the quotient of $F\times\R$ by the equivalence relation ge\-ne\-ra\-ted by $(p,t)\sim\left(\phi(p),t+1\right) $, is a manifold with boun\-da\-ry $\partial\Sigma(F,\phi)=\partial F\times\R/\Z$. Collapsing each circle $\{p\}\times\R/\Z$, to a point, $\{p\}\in\partial F$ we obtain a manifold $\bar{\Sigma}(F,\phi)$ without boundary called the {\it relative mapping torus} of $\phi$.\\

\noi This manifold has an obvious open book structure whose binding is a copy of $\partial F$, the collapsed $\partial F\times \R/\Z$, and whose mapping to the circle is induced by the projection $F\times\R\to\R$ and so the pages are copies of $F$.\\

\noi We call \emph{monodromy} of an open book $(\K,f)$ in a manifold $V$ to any self diffeomorphism of a page $F$,  such that there is a diffeomorphism $\bar{\Sigma}(F,\phi)\to V$ taking the obvious open book to $(\K,f)$.\\

\noi Passing to our varieties, $Z$ admits a $\Z^{^n}_2$ action obtained by changing the signs of the coordinates. The quotient is a simple polytope $\p$ which can be identified with the intersection of $Z$ and the first orthant of $\R^n$. It follows that $Z$ can be re\-cons\-truc\-ted from this intersection by reflecting it on all the coordinate hyperplanes.\\

\noi By a simple change of coordinates $r_i=x^2_i$, this quotient can be identified with the $d$-dimensional convex polytope given by

$$
\sum_{i=1}^n\lambda_ir_i=0,\quad\quad\sum_{i=1}^nr_i=1,
$$
$$
r_i\geqslant 0.
$$

\noi Let $\Lam'$ be obtained from $\Lam$ by adding an extra $\lambda_1$ which we interpret as the coefficient of a new extra variable $x_0$, so we get the manifold $Z'$:
$$
\lambda_1\left(x^2_0+x^2_1\right)+\sum_{i>1}\lambda_ix^2_i=0,
$$
$$
x^2_0+x^2_1+\sum_{i>1}x^2_i=1.
$$

\noi Let $Z_+$ be the intersection of $Z$ with the half space $x_1\geqslant 0$ and $Z_{_+}'$ the intersection of $Z'$ with the half space $x_0\geqslant 0$.\\

\noi The boundary of $Z_{_+}'$ is $Z$ (this shows that $Z$ is always the boundary of a pa\-ra\-lle\-li\-za\-ble manifold).\\

\noi $Z_+$ admits an action  of $\Z^{^{n-1}}_2$ by changing signs on all the variables except $x_1$ and the quotient is again $\p$. In other words, $Z_+$ can be obtained  from $\p\times \Z^{^{n-1}}_2$ by reflecting $\p$ on all the coordinate hyperplanes except $x_1=0$.\\

\noi Consider also the manifold $Z_0$ which is the intersection of $Z$ with the subspace $x_1=0$. $Z_0$ is the boundary of $Z_+$.\\

\noi $\s^1$ acts on $Z'$ (rotating the coordinates $\left(x_0,x_1\right)$) with fixed set $Z_0$. Its quotient can be identified with $Z_+$. The map

$$
\left(x_0, x_1, x_2,\dots, x_m\right) \mapsto \left(\sqrt {x^2_{_0}+x^2_{_1}}, x_2,\dots, x_m\right)
$$
 is a retraction from $Z'$ to $Z_+$ which restricts to the retraction from $Z$ to $Z_+$
 $$
 \left(x_1,x_2,\dots,x_m\right) \mapsto\left(|x_1|,x_2,\dots,x_m\right).$$

\noi Observe further that this retraction restricted to $Z'_{_+}$ is homotopic to the identity: the homotopy preserves the coordinates $x_i$, $i\geqslant 2$ and folds gradually the half space $x_0 \geqslant 0$ of the $x_0$, $x_1$ plane into the ray $x_0=0$, $x_1\geqslant 0$ preserving the distance to the origin.\\

\noi So $Z$ is the double of $Z_+$ and $Z'$ is the double of $Z'_{_+}$, and $Z'$ is the open book with binding $Z_0$, page $Z_+$ and trivial monodromy.

\begin{theo}\label{bookR}
Every manifold $Z'$ is an open book with trivial monodromy whose binding is $Z_0$ and page $Z_+$.
\end{theo}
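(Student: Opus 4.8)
The plan is to identify $Z'$ explicitly with the relative mapping torus $\bar{\Sigma}(Z_+,\mathrm{id})$, which by the discussion preceding the statement is precisely the manifold carrying the open book with binding $\partial Z_+=Z_0$, page $Z_+$, and trivial monodromy. First I would introduce the projection $f\colon Z'\setminus Z_0\to\s^1$ defined by $f(x_0,x_1,\dots,x_n)=(x_0,x_1)/\sqrt{x_0^2+x_1^2}$, which is smooth because $(x_0,x_1)\neq(0,0)$ outside $Z_0$, and pair it with the retraction $\rho(x_0,x_1,x_2,\dots,x_n)=(\sqrt{x_0^2+x_1^2},\,x_2,\dots,x_n)$ already considered above. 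One then checks that $(\rho,f)\colon Z'\setminus Z_0\to\mathrm{int}(Z_+)\times\s^1$ is a diffeomorphism, with inverse $(y_1,\dots,y_n;(c,s))\mapsto(y_1c,\,y_1s,\,y_2,\dots,y_n)$: substituting $x_0=y_1c$, $x_1=y_1s$, $x_i=y_i$ turns the two defining equations of $Z'$ into the two defining equations of $Z$ for $(y_1,\dots,y_n)$, and the constraint $y_1=\sqrt{x_0^2+x_1^2}>0$ is exactly the condition $(y_1,\dots,y_n)\in\mathrm{int}(Z_+)$. Hence $f$ is a trivial fibration over $\s^1$ whose fibre is $\mathrm{int}(Z_+)$; in particular its pages are (interiors of) copies of $Z_+$ and its monodromy will be the identity.

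It remains to check that $Z_0$ is a properly embedded codimension-two submanifold of $Z'$ with trivial normal bundle and that $f$ is the normal angular coordinate in a neighbourhood of it. The key computation is this: writing $G(x)=\lambda_1(x_0^2+x_1^2)+\sum_{i>1}\lambda_ix_i^2$ and $H(x)=x_0^2+x_1^2+\sum_{i>1}x_i^2-1$ for the defining functions of $Z'$, at any point $p=(0,0,y_2,\dots,y_n)\in Z_0$ one has $dG|_p=2\sum_{i>1}\lambda_iy_i\,dx_i$ and $dH|_p=2\sum_{i>1}y_i\,dx_i$, with no $dx_0$ or $dx_1$ appearing. Therefore $T_pZ'$ contains the entire $(x_0,x_1)$-plane, the map $g=(x_0,x_1)\colon Z'\to\R^2$ is a submersion along $Z_0=g^{-1}(0)$ with $\ker(dg|_p)=T_pZ_0$, and after shrinking, $g$ restricts to a proper submersion $\vt\to\disc^2_\varepsilon$ on a suitable neighbourhood $\vt$ of $Z_0$. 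By Ehresmann's theorem this is a fibre bundle over a disk, hence trivial: $\vt\cong Z_0\times\disc^2_\varepsilon$ with $g$ corresponding to the projection onto $\disc^2_\varepsilon$. This gives at once that the normal bundle of $Z_0$ is trivial and that, under this identification, $f|_{\vt\setminus Z_0}$ is $(y,v)\mapsto v/|v|$, i.e. the normal angular coordinate. (Equivalently, one may obtain $\vt$ from the equivariant tubular neighbourhood theorem applied to the $\s^1$-action rotating $(x_0,x_1)$, whose fixed set is $Z_0$.)

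Putting the two steps together, the map $\bar{\Sigma}(Z_+,\mathrm{id})\to Z'$, $([y_1,\dots,y_n],(c,s))\mapsto(y_1c,\,y_1s,\,y_2,\dots,y_n)$, is well defined (the collapsing over $\partial Z_+$ matches $y_1=0$) and is a diffeomorphism carrying the tautological open book of $\bar{\Sigma}(Z_+,\mathrm{id})$ to $(Z_0,f)$; this is the asserted conclusion, and in particular it exhibits $\mathrm{id}$ as a monodromy. The compactness and regularity of $Z'$ and $Z_0$ used here follow from the weak hyperbolicity hypothesis (note that weak hyperbolicity of $\Lam'$ forces that of $(\lambda_2,\dots,\lambda_n)$, so $Z_0$ is a compact manifold). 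I expect the only genuinely delicate point to be the analysis near the binding — the tangent-space computation at $p\in Z_0$ and the resulting triviality of the $\disc^2$-bundle $g\colon\vt\to\disc^2_\varepsilon$; everything else is either the explicit verification above or a restatement of facts already established in the excerpt.
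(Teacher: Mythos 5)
Your argument is correct and follows the same route as the paper: the paper's proof of this theorem is exactly the preceding construction using polar coordinates in the $(x_0,x_1)$-plane (the $\s^1$-action with fixed set $Z_0$, the retraction $(x_0,x_1,x_2,\dots)\mapsto(\sqrt{x_0^2+x_1^2},x_2,\dots)$ onto $Z_+$, and the angular coordinate as the map to $\s^1$). You have merely supplied the verifications the paper leaves implicit — the explicit diffeomorphism $(\rho,f)$ onto $\mathrm{int}(Z_+)\times\s^1$ and the tangent-space computation showing $(x_0,x_1)$ is a submersion near $Z_0$, giving the trivial normal $\disc^2$-bundle — and these are all sound.
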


\noi Since the manifold $Z^{^\C}(\p)$ can be considered for each $i$ as a manifold $Z'$ with repeated coefficient $\lambda_i$, then it is an open book with binding the manifold obtained by taking $z_i=0$. This proves:

\begin{theo}\label{bookC}
If $\p$ is a simple convex polytope and $F$ is one of its facets, there is an open book decomposition of $Z^{^\C}(\p)$ with binding $Z^{^\C}(F)$ and  trivial monodromy.
\end{theo}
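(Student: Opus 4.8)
The strategy is to reduce Theorem~\ref{bookC} to Theorem~\ref{bookR} by exhibiting $Z^{^\C}(\p)$, for each choice of facet, as a manifold of the type $Z'$ considered in the previous section. First I would set up the dictionary between the complex equations and the real ones: writing $z_j = a_j + i b_j$, the defining equations $\sum \lambda_j |z_j|^2 = 0$, $\sum |z_j|^2 = 1$ become $\sum_j \lambda_j (a_j^2 + b_j^2) = 0$, $\sum_j (a_j^2+b_j^2) = 1$. Thus $Z^{^\C}(\Lam)$ is precisely the real manifold $Z(\widetilde\Lam)$ associated to the configuration $\widetilde\Lam = (\lambda_1,\lambda_1,\lambda_2,\lambda_2,\dots,\lambda_n,\lambda_n)$ in which every coefficient is doubled, with $2n$ real variables $a_1,b_1,\dots,a_n,b_n$. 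In this description, setting $z_i = 0$ is the same as setting both $a_i = 0$ and $b_i = 0$.

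Now I would single out the facet $F$ of $\p$, corresponding to some index $i$; equivalently $Z^{^\C}(F)$ is the submanifold $\{z_i = 0\}$. In the real picture, $\widetilde\Lam$ has the coefficient $\lambda_i$ appearing twice (as the coefficients of $a_i^2$ and $b_i^2$), so $Z^{^\C}(\p) = Z(\widetilde\Lam)$ is exactly a manifold of the form $Z'$ from Section~2, with the roles of $(x_0,x_1)$ played by $(a_i,b_i)$ and of the "original" configuration $\Lam$ (with repeated $\lambda_1$) played by $\widetilde\Lam$ with one copy of $\lambda_i$ dropped — i.e. the configuration $(\lambda_1,\lambda_1,\dots,\lambda_i,\dots,\lambda_n,\lambda_n)$ with $\lambda_i$ occurring only once. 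Theorem~\ref{bookR} then applies verbatim: $Z(\widetilde\Lam) = Z'$ is an open book with trivial monodromy, binding $Z_0 = \{a_i = b_i = 0\} = \{z_i=0\}$ and page $Z_+ = Z(\widetilde\Lam)\cap\{a_i \geqslant 0\}$. It remains to identify the binding as a moment-angle manifold: $\{z_i = 0\}$ inside $Z^{^\C}(\p)$ is the moment-angle manifold $Z^{^\C}(\p')$ where $\p'$ is the polytope obtained by deleting the $i$-th inequality $r_i \geqslant 0$ — but that polytope is exactly the facet $F$, since facets of $\p$ correspond to the hyperplane sections $r_i = 0$. Hence the binding is $Z^{^\C}(F)$, as claimed.

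The only genuinely non-formal point is checking that the weak hyperbolicity (regularity) of $\Lam$ for the complex manifold corresponds to the regularity of $\widetilde\Lam$ for the associated real manifold, so that Theorem~\ref{bookR} is legitimately applicable; but this is immediate, since duplicating each $\lambda_j$ does not change the set of convex hulls of sub-configurations, so $\widetilde\Lam$ is weakly hyperbolic whenever $\Lam$ is (and, as already noted in Section~2, $Z'$ is regular whenever the configuration it comes from is). One should also remark, as the footnote in the introduction warns, that if the facet $F$ happens to be empty the binding is empty and the statement degenerates; otherwise everything goes through. I expect the main (very mild) obstacle to be purely bookkeeping: keeping straight which polytope $Z^{^\C}(F)$ corresponds to and confirming that "deleting the $i$-th facet inequality" yields precisely the facet $F$ rather than some other face, which follows from the description of $\p$ as $\{\sum\lambda_j r_j = 0,\ \sum r_j = 1,\ r_j\geqslant 0\}$ and the standard fact that its facets are the slices $r_j = 0$.
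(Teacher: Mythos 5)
Your proposal is correct and follows essentially the same route as the paper: the paper's (one-line) proof is precisely that $Z^{^\C}(\p)$ can be viewed, for each index $i$, as a real manifold $Z'$ with repeated coefficient $\lambda_i$ (since $|z_i|^2=a_i^2+b_i^2$), so that Theorem~\ref{bookR} applies with binding $\{z_i=0\}=Z^{^\C}(F)$. Your additional checks (preservation of weak hyperbolicity under doubling, and the identification of $\{r_i=0\}$ with the facet $F$) are exactly the details the paper leaves implicit.
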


\noi When $k=2$ the topology of $Z^{^\C}$ can be described precisely (see \cite{SLM} and section 8) and that includes also that of all the bindings. It can be expressed in terms of the \emph{cyclic partition} associated to $Z^{^\C}$. We have a precise description of the topology of the leaves in most cases. For our purposes we will only need the case where the total manifold is a moment-angle manifold which is described in the following:

\begin{theo}\label{pageC}
Let $k=2$, and consider the manifold $Z^{^\C}$ corresponding to the cyclic partition $n=n_1+\dots+n_{2\ell+1}$. Consider the open book decomposition of $Z^{^\C}$ corresponding to the binding at $z_1=0$, as given by Theorem \ref{bookC}. Then the leaf of this decomposition is diffeomorphic to the interior of:

\begin{itemize}
\item [a)] If $\ell=1$, the product
$$
\s^{2n_2-1}\times\s^{2n_3-1}\times\disc^{2n_1-2}.
$$

\item [b)] If $\ell>1$ and $n_1>1$, the connected sum along the boundary of $2\ell+1$ manifolds:
$$
\coprod_{i=2}^{\ell+2}\left(\s^{2d_i-1}\times\disc^{2n-2d_i-3}\right)\coprod\coprod_{i=\ell+3}^1\left(\disc^{2d_i-2}\times\s^{2n-2d_i-2}\right).
$$

\item [c)] If $n_1=1$ and $\ell>2$, the connected sum along the boundary of $2\ell$ manifolds:
$$
\coprod_{i=3}^{\ell+1}\left(\s^{2d_i-1}\times\disc^{2n-2d_i-3}\right)\coprod\coprod_{i=\ell+3}^1\left(\disc^{2d_i-2}\times\s^{2n-2d_i-2}\right)
$$
$$
\coprod\left(\s^{2d_2-1}\times\s^{2d_{\ell+2}-1}\backslash\disc^{2n-4}\right).
$$

\item [d)] If $n=1$ and $\ell=2$, a connected sum along the boundary of two manifolds:
$$
\left(\s^{2d_2-1}\times\s^{2d_4-1}\backslash \disc^{2n-4}\right)\coprod\te,
$$
where $\te$ is the exterior of an embedded $\s^{2n_2-1}\times\s^{2n_5-1}$ in $\s^{2n-4}$.
\end{itemize}
\end{theo}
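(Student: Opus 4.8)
\noi The plan is to derive Theorem~\ref{pageC} from a single statement about the real half manifolds $Z_+$, which is proved by the cut-and-surgery techniques of \cite{SLM}. First, by the proof of Theorem~\ref{bookC} we regard $\ZLam$ (with $\lambda_1\in\R^2$) as the real intersection of quadrics $Z'$ obtained by doubling the coefficient $\lambda_1$; then Theorem~\ref{bookR} identifies the page of the open book at $z_1=0$ with $Z_+=Z\cap\{x_1\geqslant0\}$ and the leaf with its interior, where $Z$ is the $(2n-1)$-variable real quadric with configuration $(\lambda_1,\lambda_2,\lambda_2,\dots,\lambda_n,\lambda_n)$ and the cut is along the single un-doubled variable. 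Since the $\lambda_i$ realize the cyclic partition $n=n_1+\dots+n_{2\ell+1}$, after a cyclic relabelling placing the index $1$ in the first block this configuration realizes the partition $m=m_1+\dots+m_{2\ell+1}$ with $m_1=2n_1-1$ and $m_j=2n_j$ for $j\geqslant2$, the cut being along a variable of block~$1$; every formula in (a)--(d) is exactly the specialization to these $m_j$ of one statement describing $Z_+$ for an arbitrary weakly hyperbolic configuration in $\R^2$ with a marked cut. That general statement carries, as in \cite{SLM}, dimensional and connectivity hypotheses, which one checks hold automatically once $m_1$ is odd and the remaining $m_j$ are even --- i.e.\ precisely in the situation of Theorem~\ref{pageC}, which is therefore unconditional.

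\noi For the general statement I would use the following, recalled in section~7: $Z_+$ is the manifold with boundary built from the polytope $\p$ by reflecting on every coordinate hyperplane except $\{x_1=0\}$; its boundary is $Z_0=Z(\Lam_0)$, with $\Lam_0$ deleting one vector of block~$1$; its double along $Z_0$ is the closed manifold $Z=Z(\Lam)$; and the diffeomorphism type of $Z$ (a connected sum of products of two spheres, under the stated hypotheses) is known from \cite{SLM} in terms of the cyclic partition, the dimensions being the partial sums $d_i$ of the $m_j$. The case $\ell=1$ is then immediate: for a three-block configuration $Z\cong\s^{m_1-1}\times\s^{m_2-1}\times\s^{m_3-1}$ in such a way that the reflection in $\{x_1=0\}$ restricts to a reflection of the first factor in a coordinate hyperplane (meeting it in an equatorial $\s^{m_1-2}$, empty when $m_1=1$), so cutting at $\{x_1\geqslant0\}$ replaces that factor by a closed hemisphere and gives $Z_+\cong\disc^{m_1-1}\times\s^{m_2-1}\times\s^{m_3-1}$, which is part~(a).

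\noi For $\ell>1$ I would run the inductive surgery argument of \cite{SLM}, now carrying the boundary $Z_0$ along: deleting the last vector of block~$1$ relates $Z_+(\Lam)$ to $Z_+(\Lam_0)$ and to the closed $Z(\Lam_0)$ by a handle attachment that can be performed rel~$\partial$, and iterating reduces the number of blocks. After the dimensional and connectivity reductions of \cite{SLM}, this exhibits $Z_+$ as a boundary connected sum of the ``half'' building blocks $\s^a\times\disc^b$ and $\disc^a\times\s^b$; tracking which spherical factor of each closed summand $\s^a\times\s^b$ of $Z$ contains the reflection locus shows that exactly one such factor is replaced by a disc, yielding the $2\ell+1$ summands of part~(b) with the dimensions $d_i$ inherited from the description of $Z$. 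When $m_1=1$, i.e.\ $n_1=1$, block~$1$ is a single point, the disc factor degenerates, two prospective summands coalesce, and the surgery that would have split off a spherical factor instead produces the exceptional summand $\s^{2d_2-1}\times\s^{2d_{\ell+2}-1}\setminus\disc^{2n-4}$ together with either the remaining boundary connected sum (part~(c), for $\ell>2$) or the exterior $\te$ of an embedded product of odd-dimensional spheres in $\s^{2n-4}$ (part~(d), for $\ell=2$); these exotic pieces already appear in the description of the corresponding closed $Z$ in \cite{SLM} and survive the cut.

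\noi The main obstacle is the boundary bookkeeping: \cite{SLM} deals with closed manifolds and gives a clean connected-sum description, whereas for $Z_+$ one must (i) check that the relevant surgeries can be carried out rel $Z_0=\partial Z_+$, (ii) determine precisely which spherical factor of each closed summand carries the reflection hyperplane and so degenerates to a disc, and (iii) control the exceptional behaviour when $n_1=1$, where summands merge and the pieces $\s^p\times\s^q\setminus\disc^{2n-4}$ and $\te$ arise. Verifying that the dimensional and connectivity hypotheses of the general real theorem are met after doubling is a routine parity count. Beyond the diffeomorphism type of the page nothing more about the open book needs checking, since by Theorem~\ref{bookC} the monodromy is trivial.
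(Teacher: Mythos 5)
Your overall strategy is the paper's: view the page as the real half--manifold $Z_+$ of the $(2n-1)$--variable configuration with multiplicities $m_1=2n_1-1$, $m_j=2n_j$ for $j\geqslant 2$, prove a general real theorem describing $Z_+$ under dimension and connectivity hypotheses, and then specialize the partition. Your reduction and your cases (a) and (b) agree in substance with section 7 (though the paper obtains (b) more directly than by an induction on blocks: since any $Z$ with $n_1>1$ is itself a $Z'$, the description of $Z_+'$ as a boundary connected sum of pieces $\s^p\times\disc^q$ is already implicit in the $h$-cobordism argument of \cite{SLM} for the closed case).

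The gap is in cases (c) and (d), which are the actual new content. Your justification --- that the exotic pieces ``already appear in the description of the corresponding closed $Z$ in \cite{SLM} and survive the cut'' --- is false: the closed $Z$ is always a plain connected sum of sphere products (or a triple product of spheres); the summands $\s^{d_2-1}\times\s^{d_{\ell+2}-1}\backslash\disc^{n-3}$ and $\te$ (real notation) occur only in $Z_+$. The mechanism the paper uses, and which your outline lacks, is this: when $n_1=1$, exactly the two homology classes indexed by $D_2$ and $D_{\ell+2}$ of $Z_+$ fail to come from the boundary $Z_0$ (deleting the single element of $J_1$ merges the classes $J_{\ell+1}$ and $J_{\ell+2}$ in $\Lam_0$); their representing spheres, obtained by reflecting the faces $F_{L_2}$ and $F_{L_{\ell+2}}$, meet transversally in the single vertex $F_{L_2}\cap F_{L_{\ell+2}}$, and a regular neighborhood of their union is $\s^{d_2-1}\times\s^{d_{\ell+2}-1}\backslash\disc^{n-3}$. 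That transverse intersection point, not a degenerating surgery, is where the exceptional summand comes from. For (d) one further needs the Mayer--Vietoris argument showing that $\te\cup_{Z_0}\left(\s^{n_2-1}\times\disc^{n_3+n_4}\times\s^{n_5-1}\right)$ is a homotopy sphere, whence $\te$ is the exterior of an embedded $\s^{n_2-1}\times\s^{n_5-1}$ in $\s^{n-3}$; nothing in your proposal produces this identification. A secondary point: the hypotheses of the real theorem do not hold ``automatically'' after doubling --- the binding $Z_0$ can be a triple product with an $\s^1$ factor (as in Example 2 of section 3) --- so the unconditional form of the complex statement needs a separate remark rather than a routine parity count.
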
 

\noi The proof of this theorem and of its real version will appear in section 7. Similar results can be given when $k>2$ for large families in the spirit of \cite{LoGli}.

\section{Examples of open book decompositions of some moment-angle manifolds.}

\noi To illustrate the variety of decompositions obtained we give now some examples, by direct application of the previous theorem (which gives always the decomposition with binding at $z_1=0$). The reader may get a better feeling if she or he looks at these examples in the light of the proof of the theorem in section 7. An additional feature of these examples is that we obtain three different open book decompositions of the same moment-angle manifold:\\

\noi Let $\Lam=\left(\lambda_1,\lambda_2,\lambda_3,\lambda_4,\lambda_5\right)$. The $5$-tuple in $\C$ corresponding to the five roots of unity ($\Lam$ satisfies the weak hiperbolicity condition).
 
 \begin{enumerate}
\item Consider $\Lam''$ where $\lambda_1$ has multiplicity 3. We obtain the moment-angle manifold 
$$
Z^{^\C}(\Lam)=\underset{2}{\sharp}\;\s^7\times\s^4\;\underset{3}\sharp\;\s^3\times\s^8.
$$

\noi When $z_1=0$ we have a configuration $\Lam'$, where now the multiplicity of $\lambda_1$ is two. Then the binding is 
$$
Z^{^\C}_{_0}(\Lam')=\underset{2}\sharp\;\s^5\times\s^4\;\underset{3}\sharp\;\s^3\times\s^6.
$$

\noi The page, by Theorem \ref{pageC}(b), is in this case the manifold 
$$
Z^{^\C}_{_+}(\Lam)=\left(\coprod_3\;\s^3\times\disc^7\right)\;\coprod\; \left(\coprod_2\;\disc^6\times\s^4\right).
$$

\item Consider  $\Lam''$ where now $\lambda_2$ has multiplicity 3. The moment-angle manifold is the same:
$$
Z^{^\C}(\Lam)=\underset{2}\sharp\;\s^7\times\s^4\;\underset{3}\sharp\;\s^3\times\s^8.
$$ 

\noi When $z_1=0$ the binding is,

 $$Z^{^\C}_{_0}(\Lam')=\s^5\times\s^3\times\s^1.
 $$
\noi and the page,  by Theorem \ref{pageC}(d), is 
$$
Z^{^\C}_{_+}(\Lam)=\s^7\times\s^3\;\big\backslash\;\disc^{10}\coprod \te,$$ 
where $\te$ is the complement of an embedding $\s^5\times\s^1$ in $\s^{10}$.\\

\item Consider now $\Lam''$ where now $\lambda_3$ has multiplicity 3. The moment-angle manifold is again
$$
Z^{^\C}(\Lam)=\underset{3}\sharp\;\s^3\times\s^8\;\underset{2}\sharp\;\s^7\times\s^4.
$$
\noi When $z_1=0$ the binding is
 $$
 Z^{^\C}_{_0}(\Lam')= \s^1\times\s^7\times\s^1.
 $$ 
\noi and the page, by Theorem \ref{pageC}(d), is 
$$
Z^{^\C}_+(\Lam)=\s^7\times\s^3\;\big\backslash\;\disc^{10} \coprod \te,$$
where $\te$ is the complement of the (unique, up to isotopy) embedding of $\s^1\times\s^1$ in $\s^{10}$.
\end{enumerate}

\noi When we take $\lambda_4$ or $\lambda_5$ with multiplicity 3 we obtain, by symmetry the same open book decompositions as in examples $3$ and $2$ above, respectively.

\begin{sidewaystable}
\begin{center}
Let: $\Lam$ be an admissible configuration,\\
$\ZLam$ the moment-angle manifold corresponding to $\Lam$,\\
$\ZLamd$ the page and $\ZLamp$ the binding of the open book decomposition.
\end{center}
\begin{tabular}{|c|c|c|c|} \hline
\parbox{4.5cm}{\bf \vspace{-.1cm} \small Admissible Configuration\vspace{-.1cm}}
&\parbox{5cm}{\bf \vspace{-.1cm}\centering\small Moment-Angle Manifold\vspace{-.1cm}}
&\parbox{4.5cm}{\bf \vspace{-.1cm} \centering\small Binding\vspace{-.1cm}}
&\parbox{4.5cm}{\bf \vspace{-.1cm} \centering \small Page\vspace{-.1cm}}\\\hline

\parbox{4.5cm}
{\vspace{.2cm}\centering$\Lam=(1,1,i,-1-i)$ \vspace{.2cm}}
&\parbox{5cm}
{\vspace{.2cm} \centering
$\ZLam=\T^2\times\s^3$\vspace{.2cm}}
&\parbox{4.5cm}
{\vspace{.2cm}\centering
$\ZLamp=\T^3$\vspace{.2cm} }
&\parbox{4.5cm}
{\vspace{.2cm} \centering
$\ZLamd=\T^2\times\disc^2$\vspace{.2cm} }\\\hline


 \parbox{4.5cm}
{\vspace{.2cm}\centering
$\Lam=(1^{n-2},i,-1-i)$\vspace{.2cm}}
&\parbox{5cm}
{\vspace{.2cm} \centering
$\ZLam=\T^2\times\s^{2n-5}$\vspace{.2cm}}
&\parbox{4.5cm}
{\vspace{.2cm}\centering
$\ZLamp=\T^2\times\s^{2n-7}$\vspace{.2cm} }
&\parbox{4.5cm}
{\vspace{.2cm}\centering
$\ZLamd=\T^2\times\disc^{2n-6}$\vspace{.2cm}}\\\hline


\parbox{4.5cm}
{\vspace{.2cm}\centering$\Lam=(i,i,1,-1-i,-1-i)$ \vspace{.2cm}}
&\parbox{5cm}
{\vspace{.2cm} \centering
$\ZLam=\s^3\times\s^1\times\s^3$\vspace{.2cm}}
&\parbox{4.5cm}
{\vspace{.2cm}\centering
$\ZLamp=\T^2\times\s^3$\vspace{.2cm} }
&\parbox{4.5cm}
{\vspace{.2cm} \centering
$\ZLamd=\s^1\times\s^3\times\disc^2$\vspace{.2cm} }\\\hline


\parbox{4.5cm}
{\vspace{.2cm}\centering\scriptsize
$\Lam=\left(\lambda_1,\lambda_2,\lambda_3,\lambda_4,\lambda_5\right)$\\ where the $\lambda_i$ are the $5-th$ roots of unity.\vspace{.2cm}}
&\parbox{5cm}
{\vspace{.2cm} \centering
$\ZLam=\underset{5}{\sharp}\; \s^3\times\s^4$\vspace{.2cm}}
&\parbox{4.5cm}
{\vspace{.2cm} \centering
$\ZLamp=\T^2\times\s^3$\vspace{.2cm}}
&\parbox{4.5cm}
{\vspace{.2cm}  \centering\scriptsize
$\ZLamd=\left(\s^3\times\s^3\backslash\disc^6\right)\amalg \tilde{\mathcal{E}},
$
where  $\tilde{\mathcal{E}}$ is the exterior of the canonical embedded $\T^2$ in $\s^6$.
\vspace{.2cm}} \\\hline


\parbox{4.5cm}
{\vspace{.2cm}\centering\scriptsize
$\Lam=\left(\lambda_1,\lambda_1,\lambda_2,\lambda_3,\lambda_4,\lambda_5\right)$\\ where the $\lambda_i$ are the $5-th$ roots of unity. \vspace{.2cm}}
&\parbox{5cm}
{\vspace{.2cm} \centering
$\ZLam=\underset{2}\sharp\;\s^5\times\s^4\;\underset{3}\sharp\;\s^\times\s^6$\vspace{.2cm}}
&\parbox{4.5cm}
{\vspace{.2cm}\centering
$\ZLamp=\underset{5}\sharp\;\s^3\times\s^4$\vspace{.2cm} }
&\parbox{4.5cm}
{\vspace{.2cm} \centering\scriptsize 
$\ZLamd=\underset{2}\coprod\left(\disc^4\times\s^4\right)\coprod\underset{3}\coprod\left(\s^3\times\disc^5\right)$\vspace{.2cm}}\\\hline


\parbox{4.5cm}
{\vspace{.2cm}\centering\scriptsize$\Lam=\left(\lambda_1,\lambda_2,\lambda_3,\lambda_4,\lambda_5,\lambda_6,\lambda_7\right)$\\ where $\lambda_i$ are the $7-th$ roots of unity.\vspace{.2cm}}
&\parbox{5cm}
{\vspace{.2cm} \centering
$\ZLam=\underset{7}\sharp\;\s^5\times\s^6$\vspace{.2cm}}
&\parbox{4.5cm}
{\vspace{.2cm}\centering
$\ZLamp=\underset{2}\sharp\;\s^5\times\s^4\;\underset{3}\sharp\;\s^3\times\s^6$\vspace{.2cm}}
&\parbox{4.5cm}
{\vspace{.2cm} \centering\scriptsize 
$\ZLamd=\underset{2}\coprod\left(\s^5\times\disc^5\right)\coprod \left(\underset{3}\coprod\;\disc^4\times\s^6\right)$ \\$\coprod \Big(\left(\s^5\times\s^5\right)\backslash\disc^{10}\Big)$\vspace{.2cm}}\\\hline

\end{tabular}
\caption{\small Open Book Decomposition of some Moment-Angle Manifolds.}
\label{OBD}
\end{sidewaystable}
\newpage

\section{Complex and algebraic structures on moment-angle manifolds,  $\lv$-manifolds and pages.}

If we take the quotient of $\ZLam$ by the scalar (or diagonal) action of $\s^1$: 
$$
\vdu(\Lam)=\ZLam/\s^1,
$$

\noi we obtain a compact, smooth manifold  $\vdu(\Lam)\subset \CP^{n-1}$. These manifolds are called by some authors  $\lv$-$\mer$ manifolds.\\

\noi It is known that various of these objects admit natural complex structures (see, for example, \cite{BM}, Theorem 12.2): When $k$ is even, both $\vdu(\Lam)$ and $\ZLam\times\s^1$ have natural complex structures.  When $k$ is odd, $\ZLam$ itself has a natural complex structure.\\

\noi Let
$$
\pi_{_\Lambda}:\ZLam\to\vdu(\Lam),
$$ 
\noi denote the canonical projection.\\

\noi Consider now the open book decomposition of $\ZLam$ described above, co\-rres\-pon\-ding to the variable $z_1$. Let $\Lam_0$ be obtained from $\Lam$ by removing $\lambda_1$. It is clear that the diagonal $\s^1$-action on $\ZLam$ has the property that each orbit intersects the page $\ZLamd$ in a unique point and every point of this page is intersected tranversally by the orbits. This implies that the restriction of the canonical projection $\pi_{_{\Lambda}}$ to each page is a diffeomorphism onto its image $\vdu(\Lam)-\vdu(\Lam_0)$.\\

\noi For $k$ even we therefore obtain, by pulling-back the complex structure of $\vdu(\Lam)-\vdu(\Lam_0)$:\\

\begin{rmk}\label{rmk1}
\noi \emph{The page of the open book decomposition of $\ZLam$ with binding $Z^{^\C}_{_0}(\Lam_0)$ admits a natural complex structure which makes it biholomorphic to the complex manifold $\vdu(\Lam)-\vdu(\Lam_0)$.}\\
\end{rmk}

\noi We can go a step further by using the results in \cite{MeerVerj1}:\\

\noi It is shown there that for every $\Lam$ the manifold $\vdu(\Lam)$ has a holomorphic and locally-free action of $\C^{k/2}$ whose orbits determine a tranversally K\"ahler foliation $\fol$ of complex dimension $k/2$. The condition for the leaves of $\fol$ to be compact is the following:\\

\noi An admissible configuration $\Lam=\left(\lambda_1,\dots,\lambda_n\right)$ fulfills condition $(\pmb{K})$ if and only if we may choose, for the (real) space of solutions of the system 

\begin{equation*}
\begin{cases}
\sum_{i=1}^n s_i\lambda_i&=0,\\
\sum_{i=1}^ns_i&=0,
\end{cases}
\end{equation*}
\noi a basis with integer coordinates.\\

\noi Also it was shown in \cite{MeerVerj1} that every configuration $\Lam$ can be made to satisfy condition  $(\pmb{K})$ by an arbitrarily small perturbation. The main result proved in \cite{MeerVerj1} is the following:

 \begin{theo}\label{CEF}
 Let $\Lam$ be an admissible configuration that satisfies condition $(\pmb{K})$. Then \\
 
 \noi (a) The leaves of the foliation $\fol$ of $\vdu(\Lam)$ are compact complex tori of complex dimension $m$.\\
 
 \noi (b) The quotient space of $\vdu(\Lam)$ by $\fol$ is a projective toric variety of complex dimension $n-2m-1$. We denote it by $T(\Lam)$.\\
 
  \noi (c) The toric variety $T(\Lam)$ comes equipped with an equivariant orbifold structure.\\
  
\noi (d) The natural projection $p_{_\Lambda} : \vdu(\Lam)\to T(\Lam)$ is a holomorphic principal Seifert bundle, with compact complex tori of complex dimension $m$ as fibers.\\

\noi (e) The transversely K\"ahlerian form $\omega$ of $\vdu(\Lam)$ projects onto a K\"ahlerian form $\tilde{\omega}$ of $T(\Lam)$.  
 \end{theo}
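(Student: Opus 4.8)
The plan is to use condition $(\pmb K)$ to close up the leaves of $\fol$ into compact complex tori, and then to recognise the leaf space as the projective toric orbifold attached to the polytope $\p$; the remaining assertions will follow by combining standard facts about Riemannian foliations with standard toric geometry. Throughout, set $m=k/2$.

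First I would make the foliation $\fol$ explicit on the model $\vdu(\Lam)=\ZLam/\s^1$. Let $B$ be the $(k+1)\times n$ matrix whose rows are the $k$ coordinate functions $i\mapsto\lambda_i$ together with the constant row $\mathbf 1=(1,\dots,1)$; for an admissible configuration $B$ has rank $k+1$, so the solution space of the $(\pmb K)$-system is $\ker B$, of real dimension $n-k-1=n-2m-1$. Put $W=(\ker B)^{\perp}\subset\R^n$, the row space of $B$; it has dimension $k+1$ and contains $\mathbf 1$. The point is that the leaves of $\fol$ are the orbits, in $\ZLam/\s^1$, of the connected subgroup $\mathbf E\subset\T^n$ which is the image of $W$ under the universal covering $\R^n\to\T^n$, and that $\mathbf E$ carries the complex torus structure recorded by the holomorphic $\C^{k/2}$-action of \cite{MeerVerj1}. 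Since a linear subspace of $\R^n$ is rational exactly when its orthogonal complement is, condition $(\pmb K)$ says precisely that $W$ is rational; hence $\mathbf E$ is a compact subtorus $\T^{k+1}$ of $\T^n$ containing the diagonal circle, and $\mathbf T_\Lam:=\mathbf E/\s^1\cong\T^{k}$ acts on $\vdu(\Lam)$ with the leaves of $\fol$ as its orbits. The orbit through a point $z$ is $\mathbf T_\Lam/L_z$ with $L_z$ finite, and the corresponding period lattice has full rank exactly because $W$ is rational; being at the same time a leaf of the holomorphic foliation $\fol$, each such orbit is therefore a compact complex torus of complex dimension $m$, which is part (a). For $z$ in the open stratum where all $z_i\neq0$ the action is free, so $\mathbf T_\Lam$ is the \emph{common} generic leaf; over the proper faces of $\p$ the isotropy $L_z$ is a non-trivial finite group, and these orbits are the exceptional fibres.

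Since $\fol$ is transversely K\"ahlerian it is in particular a Riemannian foliation, now with compact leaves, so by Molino's structure theory for Riemannian foliations the leaf space $T(\Lam)=\vdu(\Lam)/\fol$ is a compact Hausdorff orbifold and the quotient map $p_{_\Lam}:\vdu(\Lam)\to T(\Lam)$ is a Seifert fibration. Because the leaves are the orbits of a holomorphic, locally free action whose generic fibre is the \emph{fixed} complex torus $\mathbf T_\Lam$, this Seifert fibration is in fact a holomorphic \emph{principal} Seifert bundle with structure group $\mathbf T_\Lam$ --- which is part (d) --- and $T(\Lam)$ acquires the unique complex orbifold structure for which $p_{_\Lam}$ is holomorphic. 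The residual torus $\T^{n-1}=\T^n/\s^1$ commutes with $\fol$ and descends to a holomorphic, effective action of $\T^{n-1}/\mathbf T_\Lam\cong\T^{n-2m-1}$ on $T(\Lam)$ with a dense open orbit, namely the image of the stratum where all $z_i\neq0$; hence $T(\Lam)$ is a complex toric variety of complex dimension $n-2m-1$. Matching its torus strata with the faces of the simple polytope $\p$ --- the facets of $\p$ being the loci where the individual $z_i$ vanish --- identifies $T(\Lam)$ with the toric variety $X_\p$ whose fan is the normal fan of $\p$; under $(\pmb K)$ this fan is rational, and it is simplicial because $\p$ is simple, which yields the equivariant orbifold structure of part (c) (the facet weights being the indices of the relevant sublattices) together with part (b). Finally, the transverse K\"ahler form of $\fol$ is basic, closed and positive, hence it descends to a K\"ahler form $\tilde\omega$ on the orbifold $T(\Lam)$, which is part (e); and since $[\tilde\omega]$ is, up to scale, the class of an ample line bundle on $X_\p$, the orbifold (Baily--Kodaira) embedding theorem shows that $T(\Lam)$ is projective --- alternatively, a toric variety whose fan is the normal fan of a polytope is automatically projective.

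The main obstacle is the second paragraph: pinning down the precise description of $\fol$ on the moment-angle model and carrying out the period-lattice computation that converts the arithmetic statement $(\pmb K)$ into both the compactness of the leaves and the \emph{constancy} of the generic period lattice $L_z$; without the latter one would obtain only a Seifert fibration by a varying family of tori, not a principal bundle. Everything after that is assembly: the orbifold leaf space is Molino's theorem, the toric identification is a matching of torus strata with the faces of $\p$, and projectivity comes from the rationality of $\p$ under $(\pmb K)$.
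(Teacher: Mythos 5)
You should know that the paper itself contains no proof of Theorem~\ref{CEF}: it is quoted as the main result of \cite{MeerVerj1}, so the only thing to compare your sketch against is the argument in that reference. Measured against that, your route is essentially the standard one and is correct in outline: condition $(\pmb{K})$ is the rationality of the solution space $\ker B$ of your matrix $B$, hence of the $(k+1)$-dimensional row space $W$ containing $\mathbf{1}$; therefore the subgroup of $\T^n$ generated by $W$ is a \emph{closed} subtorus, its orbits modulo the diagonal circle are the leaves of $\fol$, and local freeness makes them compact complex tori of complex dimension $m=k/2$. The leaf space is then the toric variety/orbifold attached to the simple polytope $\p$ (equivalently, to its rational simplicial normal fan), which is projective, and the transverse K\"ahler form descends. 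Two minor comments: the appeal to Molino's structure theory is unnecessary once the leaves are realized as orbits of a closed torus acting locally freely on a compact manifold, since the orbit space is then an orbifold directly; and you are right that the substantive points are (i) the explicit identification of the leaf directions with $W$ and (ii) the freeness of the action on the open stratum where all $z_i\neq 0$, which is exactly what upgrades the Seifert fibration to a \emph{principal} one with the exceptional fibres sitting over the proper faces of $\p$.
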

 
 \noi The bundle $p_{_{\Lam}}:\vdu(\Lam)\to T(\Lam)$ is called a  \emph{generalized Calabi-Eckmann fibration} over $T(\Lam)$.  \\

\noi If $\Lam$ satisfies condition $(\pmb{K})$ (and therefore $\Lam_0$ also) the manifolds $T(\Lam)$ and $T(\Lam_0)$ are both toric orbifolds (possibly singular) and therefore both are algebraic varieties. As a consequence, $T(\Lam)-T(\Lam_0)$ is a quasi-projective variety, which (when is nonsingular) is a K\"ahler manifold and, in particular, symplectic.\\

\noi Therefore, we have shown:
\begin{theo}
Assume $k$ is even. Then the leaf of the open book structure on $\ZLam$ is naturally a complex manifold. After a small perturbation of $\Lam$, this leaf admits a holomorphic (Seifert) fibration over a quasitoric variety, with fibre a compact complex torus.
\end{theo}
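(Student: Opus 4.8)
The plan is to assemble the statement from Remark~\ref{rmk1}, the perturbation result of \cite{MeerVerj1}, and Theorem~\ref{CEF}; the only point needing a genuine argument is the compatibility of the generalized Calabi-Eckmann fibration with the deletion of one coordinate. First, for the complex-structure assertion there is nothing to do beyond invoking Remark~\ref{rmk1}: when $k$ is even, the leaf $\ZLamd$ of the open book decomposition with binding $Z^{^\C}_{_0}(\Lam_0)$ is, by construction, biholomorphic to $\vdu(\Lam)-\vdu(\Lam_0)$, where $\vdu(\Lam_0)\subset\vdu(\Lam)$ is the sub-$\lv$-$\mer$ manifold cut out by $z_1=0$ and $\Lam_0$ is obtained from $\Lam$ by deleting $\lambda_1$.

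Next I would arrange the arithmetic hypothesis. By \cite{MeerVerj1} an arbitrarily small perturbation of $\Lam$ makes it satisfy condition $(\pmb{K})$, and such a perturbation changes neither the diffeomorphism type of $\ZLam$ nor its open book decomposition. Moreover $\Lam_0$ then also satisfies $(\pmb{K})$: the relevant space of solutions for $\Lam_0$ is the slice $\{s_1=0\}$ of the solution space of $\Lam$, hence again a rational subspace, which therefore admits an integral basis. Applying Theorem~\ref{CEF} to $\Lam$ and to $\Lam_0$, one obtains holomorphic principal Seifert bundles $p_{_\Lambda}:\vdu(\Lam)\to T(\Lam)$ and $p_{_{\Lambda_0}}:\vdu(\Lam_0)\to T(\Lam_0)$ with compact complex tori as fibres, over projective toric orbifolds $T(\Lam)$ and $T(\Lam_0)$.

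The key step is to check that $\vdu(\Lam_0)$ is a union of fibres of $p_{_\Lambda}$ --- equivalently, that it is \emph{saturated} for the transversely K\"ahler foliation $\fol$ of $\vdu(\Lam)$ --- and that $p_{_\Lambda}$ carries it onto the closed toric suborbifold $T(\Lam_0)\hookrightarrow T(\Lam)$ corresponding to the facet $z_1=0$. Granting this, restriction of $p_{_\Lambda}$ yields a holomorphic Seifert fibration $\vdu(\Lam)-\vdu(\Lam_0)\to T(\Lam)-T(\Lam_0)$ whose fibres are still the compact complex tori of Theorem~\ref{CEF}(a), over the Zariski-open (quasi-projective) toric orbifold $T(\Lam)-T(\Lam_0)$, that is, a quasitoric variety; and when this base is nonsingular it is moreover K\"ahler. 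Transporting everything through the biholomorphism of Remark~\ref{rmk1} then proves the theorem. I expect the saturation claim to be the main obstacle: it requires returning to the construction in \cite{MeerVerj1} of the holomorphic, locally free $\C^{k/2}$-action whose orbits are the leaves of $\fol$, and verifying that its restriction to the subvariety $z_1=0$ coincides with the corresponding action attached to $\Lam_0$, together with a check that the Seifert/orbifold data of $T(\Lam)$ restrict correctly along $T(\Lam_0)$.
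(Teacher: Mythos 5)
Your proposal follows essentially the same route as the paper, which simply assembles Remark~\ref{rmk1}, the perturbation to condition $(\pmb{K})$, and Theorem~\ref{CEF} and then concludes. You are in fact slightly more careful than the text on the two points it leaves implicit: that $\Lam_0$ inherits condition $(\pmb{K})$ (your slice argument $\{s_1=0\}$ is correct, since the intersection of two rational subspaces is rational) and that $\vdu(\Lam_0)$ is saturated for $\fol$, which does hold because the holomorphic $\C^{k/2}$-action of \cite{MeerVerj1} preserves the coordinate hyperplane $z_1=0$ and restricts there to the action attached to $\Lam_0$.
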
 

\section{Different Geometric Structures.}

\noi The even dimensional moment-angle manifolds and the $\lv$-manifolds have the characteristic that, except for a few, well-determined cases, they do not admit symplectic structures. It was, therefore, surprising to us that the odd dimensional ones can have contact structures. 
In fact we can conjecture that they are all contact manifolds:

\begin{con} Assume $k=2m$ is even. Then, for every admissible configuration $\Lam\subset{\R^{2m}}$, the odd dimensional manifold $\ZLam$ is a contact manifold.

\end{con}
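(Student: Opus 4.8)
\noi \emph{The plan} is to follow the confoliation--deformation scheme already used in Section 6 for the manifolds $\ZLamss$. We start from the almost contact structure produced in Section 5 and upgrade it to a genuine contact structure in three stages: (i) exhibit an explicit $1$-form $\alpha$, with $\ker\alpha$ the given almost contact hyperplane field, which is a \emph{positive confoliation}, i.e. $\alpha\wedge(d\alpha)^{N}\geqslant 0$ for $\dim\ZLam=2N+1$; (ii) show $\alpha$ is \emph{conductive} in the sense of Definition 4 of 6.3, so that every point of $\ZLam$ is joined by a Legendrian path to a point where $\alpha\wedge(d\alpha)^{N}>0$; (iii) apply the Altschuler--Wu heat flow \cite{AltWu} to deform $\alpha$ through positive confoliations into a contact form. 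Stages (i) and (iii) are formal once (ii) holds; the entire difficulty sits in (ii), which is exactly what the product-type symmetry of the examples $\ZLamss$ makes available and what the conjecture asserts in general.

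\noi For the concrete $\alpha$ we use that, when $k=2m$, the quotient $\vdu(\Lam)=\ZLam/\s^1$ is a complex manifold and $\pi_{_\Lambda}\colon\ZLam\to\vdu(\Lam)$ is a principal $\s^1$-bundle. After an arbitrarily small perturbation of $\Lam$ --- which alters neither the admissibility nor the diffeomorphism type of $\ZLam$, so nothing is lost --- we may assume Condition $(\pmb{K})$; then Theorem \ref{CEF} furnishes a holomorphic Seifert fibration $p_{_\Lambda}\colon\vdu(\Lam)\to T(\Lam)$ over the \emph{symplectic} toric orbifold $(T(\Lam),\tilde\omega)$ with compact complex tori as fibres. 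Composing, $\ZLam$ fibres over $T(\Lam)$ with fibre $F$ an $\s^1$-bundle over a $2m$-torus --- a compact nilmanifold of odd dimension $2m+1$. One then assembles $\alpha$ from a connection form $\theta$ for $\pi_{_\Lambda}$, a fibrewise contact form on $F$, and the pullback of a potential for a large multiple of $\tilde\omega$, and checks $\alpha\wedge(d\alpha)^{N}\geqslant 0$, with equality precisely along the preimage in $\ZLam$ of the tangent spaces of the torus fibres of $p_{_\Lambda}$.

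\noi The main obstacle is that this first $\alpha$ is \emph{too} degenerate: $d\alpha$ drops rank by $2m$ along the entire foliation by torus fibres, so $\alpha\wedge(d\alpha)^{N}$ vanishes identically there rather than on a conducting subset, and the heat flow has nothing to act on. The real task is to tilt $\alpha$ inside its homotopy class of almost contact structures until its non-contact locus shrinks to a set that can be swept out by Legendrian paths. The natural way to organize this is a genuinely fibrewise construction: equip the $(2m+1)$-dimensional fibre $F$ with a Bourgeois/Heisenberg-type invariant contact form --- which exists because $F$ is an $\s^1$-bundle over a torus, hence a product of a torus with a Heisenberg nilmanifold --- and glue these fibrewise contact structures over $T(\Lam)$ using its symplectic form $\tilde\omega$, in the style of a contact fibre bundle. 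One then still must (a) check that the structure group of $\ZLam\to T(\Lam)$ acts by contactomorphisms of $F$, (b) verify the cohomological compatibility between $[\tilde\omega]$ and the fibrewise contact data, after rescaling, and (c) push the construction through the orbifold points of $T(\Lam)$ and the nontriviality of both $p_{_\Lambda}$ and $\pi_{_\Lambda}$. Step (c) is, in our view, the technical heart, and the reason the statement is at present only a conjecture.

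\noi Two closing remarks. When $k=2$ the diffeomorphism type of $\ZLam$ is completely known (\cite{SLM} and Section 7): it is a connected sum of products $\s^{p}\times\s^{q}$ with $p+q$ odd; as each such product is a contact manifold and the contact connected sum is defined in this dimension, the conjecture should follow here by direct inspection, which also serves as a test of the general strategy. And in all cases the almost contact structure of Section 5 already gives a contact structure non-constructively, by the theorem of Borman--Eliashberg--Murphy that every almost contact structure is homotopic to a contact one; the point of the conjecture is to produce one by the explicit, flow-based method above.
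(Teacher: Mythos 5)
The statement you are asked to prove is stated in the paper as a \emph{conjecture}: the paper itself offers no proof, only supporting evidence (the almost contact and quasi-contact structures of Section 5, Bourgeois's theorem for the products $\s^{2n-5}\times\T^{2m}$, and the Altschuler--Wu construction for the related manifolds $\ZLamss$ in Section 6). Your proposal is likewise not a proof but a strategy outline, and to your credit you say so explicitly. The concrete gap is exactly where you locate it, but it is worth spelling out why it is fatal to a direct transfer of the Section 6 argument: for $\ZLamss$ the form $\alpha$ fails to be contact only along a codimension-two submanifold $W$, which cannot separate the manifold and whose points are accessible by short Legendrian arcs transverse to $W$; in your construction the degeneracy locus of $\alpha\wedge(d\alpha)^{N}$ is the entire preimage of the foliation by torus fibres of $p_{_\Lambda}$ --- a codimension-zero phenomenon along which $d\alpha$ drops rank by $2m$ --- so the accessibility hypothesis of Theorem \ref{AW} is not verified and the heat flow cannot be invoked. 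Until steps (a)--(c) of your fibrewise construction (equivariance of the fibre contact form under the structure group, the cohomological matching with $[\tilde\omega]$, and the passage through the multiple fibres of the Seifert fibration) are actually carried out, nothing has been proved.

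Two smaller points. First, your closing appeal to the theorem that every almost contact structure is homotopic to a contact one would indeed settle the conjecture given the paper's Section 5, but that result postdates and is external to the paper, which cites it only as the conjecture of \cite{MMP}; it cannot be used as if it were established here, and in any case it abandons the explicit flow-based construction that is the stated point of the exercise. Second, your $k=2$ remark is closer to the paper's own evidence than you suggest: Remark \ref{rmk4} records precisely the observation (attributed to Meckert \cite{Meckert} and Eliashberg \cite{Elias}) that connected sums of sphere products of the relevant dimensions carry contact structures, so that part of your proposal reproduces what the paper already knows rather than advancing the conjecture.
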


\noi Here is a first example: The next theorem was proved by F. Bourgeois in  \cite{Bour} (it is a corollary of Theorem 10 in \cite{Giroux}).

\begin{theo}\label{Bour}
If a closed manifold $\vd$ admits a contact structure, then so does $\vd\times\T^2$.
\end{theo}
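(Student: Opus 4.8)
The statement to prove is Theorem~\ref{Bour}: if a closed manifold $\vd$ admits a contact structure, then so does $\vd\times\T^2$.

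\medskip

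The plan is to deduce the result from Giroux's correspondence between contact structures and open book decompositions, exactly as attributed to Bourgeois. First I would recall that, in the given odd dimension $2\ell-1$, a contact structure $\xi$ on $\vd$ is supported by an open book $(\K,f)$ with some page $F$ (a Weinstein/Liouville domain) and monodromy $\phi$; this is Giroux's existence theorem. The key idea is that $\vd\times\T^2$ carries a natural open book whose page is $F\times\T^2$ (or, more precisely, the pages of the product construction) and whose monodromy is $\phi\times\mathrm{id}_{\T^2}$, and one must upgrade this topological open book to a \emph{contact} open book by building a compatible contact form. Concretely, writing coordinates $(\theta_1,\theta_2)$ on $\T^2$ and letting $\lambda$ be the Liouville form on $F$ with contact form $\lambda + d\theta$ on the mapping-torus model of $\vd$, one looks for a one-form on $\vd\times\T^2$ of the shape $\lambda + h_1\,d\theta_1 + h_2\,d\theta_2$ where the functions $h_i$ depend on the radial coordinate near the binding and interpolate appropriately; the Bourgeois construction gives an explicit such formula.

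\medskip

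The steps, in order: (1) invoke Giroux to present $(\vd,\xi)$ as supported by an open book with Liouville page $(F,\lambda)$ and monodromy $\phi$; (2) describe $\vd\times\T^2$ via the product open book, identifying a neighborhood of the new binding $\K\times\T^2$ with $\disc^2\times\K\times\T^2$ and the complement as a mapping torus of $F\times\T^2$; (3) on the complement write down the candidate one-form $\alpha = \lambda + \varepsilon\,(\cos\!s\;d\theta_1 - \sin\!s\;d\theta_2) + \varepsilon'\,(\sin\!s\;d\theta_1 + \cos\!s\;d\theta_2)\wedge(\text{something})$—or rather the precise Bourgeois form, which near the binding in the $(r,s)$-polar coordinates reads $\alpha = h_1(r)\,\lambda + h_2(r)\,d\theta_1 + h_3(r)\,d\theta_2$ with suitable boundary behavior—and (4) verify $\alpha\wedge(d\alpha)^{\ell}\neq 0$ everywhere, checking separately on the binding neighborhood and on the page region, and checking that the gluing is consistent with $\phi$ acting trivially on the $\T^2$ factor.

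\medskip

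The main obstacle is step (3)--(4): choosing the interpolating functions $h_i$ so that the contact (maximal non-integrability) condition holds simultaneously in the interior of the pages, where the $\T^2$-directions must contribute the extra two dimensions of the symplectic structure $d\alpha$, and near the binding, where the model must match $\disc^2\times\K\times\T^2$ and the form degenerates in a controlled way; this is precisely the technical heart of Bourgeois's argument, and the verification that $(d\alpha)^{\ell}\wedge\alpha$ is nowhere zero amounts to a determinant/Wronskian-type inequality on the $h_i$. Since this is stated in the paper as an already-proved theorem of Bourgeois (a corollary of Giroux's Theorem~10), I would not reproduce the computation in full but cite \cite{Bour} and \cite{Giroux}, indicating that the product open book $F\times\T^2$ with monodromy $\phi\times\mathrm{id}$ carries the Bourgeois contact form, and that the contact condition follows from the explicit choice of interpolating functions there.
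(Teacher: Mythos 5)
Your proposal is consistent with the paper, which gives no proof of this statement at all and simply cites Bourgeois \cite{Bour} as a corollary of Giroux's Theorem~10 \cite{Giroux}; your outline of the underlying argument (supporting open book with page $F$ and monodromy $\phi$, then the Bourgeois one-form on $\vd\times\T^2$ built from the interpolating functions near the binding) is an accurate sketch of what the cited reference actually does. Since you ultimately defer to \cite{Bour} for the contact-condition computation, your treatment matches the paper's.
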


\noi Therefore, \emph{For $n>3$, moment-angle manifolds such as
$$
\ZLam=\s^{2n-5}\times\T^{2m}
$$
admit a contact structure.}\\

\noi We will get closer to our conjecture by showing that odd-dimensional moment-angle manifolds admit structures that are weaker versions of the contact structure.

\subsection{Almost contact manifolds and quasi-contact manifolds.}

\begin{defi}
 A $(2n+1)$-dimensional manifold $\vd$ is called \emph{almost contact} if its tangent bundle admits a reduction to $\su(n)\times \R$.
\end{defi}

\noi It is known that every contact manifold is an almost contact manifold (see \cite{Gray}).\\

\noi On the other hand, D. Mart\'inez, V. Mu\~{n}oz and F. Presas in \cite{MMP} defined a quasi-contact manifold as follows.
\begin{defi}
A $(2n+1)$-dimensional manifold $\vd$ is called \emph{quasi-contact} if it admits a closed $2$-form $\omega$ such that $\omega^n$ is a non-zero $2n$-form all over the manifold.
\end{defi}

\noi They proved that given an almost contact manifold $\vd$ and given $\gamma\in H^2(\vd,\R)$, there exist a quasi-contact structure $\omega$ in $\vd$ such that $[\omega]=\gamma$. They also conjectured that every almost contact manifold is in fact a contact manifold.\\

\noi Now, we have:

 \begin{theo} If $k$ is even, $\ZLam$ is an almost contact manifold and also a quasi-contact manifold.
 \end{theo}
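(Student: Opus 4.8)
The plan is to produce an explicit almost contact structure on the odd-dimensional manifold $\ZLam$ (for $k=2m$ even) by exploiting the fact that $\ZLam\times\s^1$ carries a natural complex structure, and then to deduce the quasi-contact statement from the general result of Mart\'inez--Mu\~noz--Presas quoted earlier. First I would recall the relevant dimension count: $\ZLam$ has real dimension $2n-k-1 = 2n-2m-1$, which is odd, say $2p+1$ with $p = n-m-1$; and $\vd:=\ZLam\times\s^1$ then has real dimension $2p+2$. The key input is the statement cited from \cite{BM} (Theorem 12.2): when $k$ is even, $\ZLam\times\s^1$ admits a natural (integrable) complex structure $J$. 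The tangent bundle $T\vd$ is therefore a complex vector bundle of rank $p+1$, i.e. its structure group reduces to $\su(p+1)$ (using any compatible Hermitian metric — equivalently $\mathbf{GL}(p+1,\C)$ up to homotopy).

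The main step is to transfer this reduction to $\ZLam$. I would write $T\vd = T(\ZLam\times\s^1) \cong \pi^*T\ZLam \oplus \underline{\R}$, where $\pi\colon \ZLam\times\s^1\to\ZLam$ is the projection and $\underline{\R}$ is the trivial line bundle spanned by $\partial/\partial\theta$ on the $\s^1$ factor. Restricting to a slice $\ZLam\times\{pt\}$, this gives $T\ZLam \oplus \underline{\R} \cong J$-complex bundle of rank $p+1$; hence the structure group of $T\ZLam\oplus\underline{\R}$ reduces to $\su(p+1)$. By definition this is precisely an almost contact structure on $\ZLam$ in the sense of Definition~1 above (tangent bundle plus a trivial line summand admitting a reduction to $\su(p+1)$ — one unwinds Definition~1 as saying $T\vd$ has a reduction to $\su(p)\times\R$ where $\dim\vd = 2p+1$, so that $T\vd \oplus \underline{\R}$ is a complex bundle). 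So the almost contact structure on $\ZLam$ is obtained by pulling back, via the inclusion of a slice, the complex structure on $T(\ZLam\times\s^1)$.

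Once $\ZLam$ is known to be almost contact, the quasi-contact conclusion is immediate from the result of Mart\'inez--Mu\~noz--Presas recalled just above the statement: given an almost contact manifold $\vd$ and any class $\gamma\in H^2(\vd;\R)$, there is a quasi-contact $2$-form $\omega$ with $[\omega]=\gamma$ (take $\gamma=0$ if one wishes). Applying this to $\vd = \ZLam$ yields a closed $2$-form $\omega$ with $\omega^p$ nowhere zero, which is exactly the quasi-contact condition.

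I expect the only real subtlety to be the bookkeeping in the middle step: one must check that the trivial line bundle used to split $T(\ZLam\times\s^1)$ as $\pi^*T\ZLam\oplus\underline{\R}$ is the same trivial summand appearing in the definition of "almost contact", i.e. that Definition~1's reduction to $\su(p)\times\R$ on a $(2p+1)$-manifold is equivalent to $T\vd\oplus\underline{\R}$ being a complex vector bundle. This is a standard obstruction-theoretic fact (a reduction of the structure group of an odd-rank-plus-one bundle, via the homotopy equivalence $\mathbf{GL}(q,\C)\simeq \su(q)$ and $\mathbf{GL}(2q+1,\R)\supset \mathbf{GL}(q,\C)\times\R$), but it is the place where care is needed; everything else is either quoted (the complex structure on $\ZLam\times\s^1$, the MMP theorem) or formal.
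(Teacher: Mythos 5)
Your argument is correct, but it takes a genuinely different route from the paper's. The paper does not stabilize: it uses the circle fibration $\pi_{_\Lambda}\colon\ZLam\to\vdu(\Lam)$ onto the $\lv$-$\mer$ manifold and the fact that the foliation by orbits of the diagonal $\s^1$-action is transversally holomorphic (because $\vdu(\Lam)$ is complex). This yields an atlas on $\ZLam$ modelled on $\C^{n-2}\times\R$ whose transition differentials are block-triangular, with a $\gl(n-2,\C)$ block in the complex directions and a real entry in the last coordinate; that subgroup of $\gl(2n-3,\R)$ retracts by Gram--Schmidt onto $\su(n-2)\times\R$, so the reduction of $T\ZLam$ itself is exhibited directly, together with the underlying hyperplane field (the distribution transverse to the orbits). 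You instead invoke the complex structure on $\ZLam\times\s^1$ to conclude that $T\ZLam$ plus a trivial real line bundle is a complex bundle, and then destabilize. This works, but the destabilization is precisely the step requiring care, as you note: the naive argument (split off a nowhere-zero section of the rank-$(2p+2)$ bundle and cancel a trivial line summand) sits at the borderline of the stable range, since the ranks equal $\dim\ZLam=2p+1$, and cancellation can fail there (compare $T\s^2\oplus\R\cong\R^3$). The correct justification is the classical $SO(2p+1)$-equivariant diffeomorphism $SO(2p+1)/\su(p)\cong SO(2p+2)/\su(p+1)$, which identifies the bundle of almost contact structures on $T\ZLam$ with the bundle of orthogonal complex structures on the once-stabilized tangent bundle, and hence identifies their sections; with that fact cited your proof closes. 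The quasi-contact conclusion via the theorem of Mart\'inez--Mu\~noz--Presas is identical in both arguments. What the paper's route buys is an explicit, geometrically adapted almost contact structure (relevant to the contact constructions later in the paper); what yours buys is brevity and independence from the transversally holomorphic structure, at the cost of leaning on the stabilization lemma.
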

 
 \begin{proof} 
 Consider the fibration $\pi_{_{\Lam}}:\ZLam\to \vdu(\Lam)$ with fibre the circle, given by taking the quotient by the diagonal action. Since $N(\Lambda)$ is a complex manifold we have that the foliation defined by the diagonal circle action  is transversally holomorphic. Therefore,  $\ZLam$ has an atlas modeled on $\C^{n-2}\times\R$ with changes of coordinates of the charts of the form
 $$
 \left(\left(z_1,\cdots,z_{n-2}\right), t\right)  \mapsto \left(h\left(z_1,\cdots,z_{n-2},t\right), g\left(z_1,\cdots,z_{n-2},t\right)\right),
 $$ 
 \noi where $h:U\to\C^{n-2}$  and $g:U\to\R$ where $U$ is an open set in $C^{n-2}\times\R$ and, for each
 fixed $t$ the function $\left(z_1,\cdots,z_{n-2}\right)\mapsto h\left(z_1,\cdots,z_{n-2},t\right)$ is a biholomorphism onto an open set of $C^{n-2}\times\{t\}$. This means that the differential, in the given coordinates, is represented by a matrix of the form
  $$
 \left[\begin{array}{ccc|c}
  &  &  &  \\
   & A &  & \ast \\
  &  &  & \\ \hline
  0& \dots & 0 & r
  \end{array}\right],
$$
 \noi where $\ast$ denotes a column $(n-2)$-real vector and $A\in{\gl(n-2,\C)}$. 
 The set of matrices of the above type form a subgroup
 of $\gl(2n-3,\R)$. By Gram-Schmidt this group retracts onto $\su(n-2)\times\R$.\\
 
\noi This shows that $\ZLam$ is an almost contact manifold and therefore also a quasi-contact manifold by the result in \cite{MMP}.
 \end{proof}

\begin{rmk}$\;$
\begin{enumerate}
\item The conjecture in \cite{MMP} would imply our conjecture that every odd dimensional moment-angle
manifold admits a contact structure. We are working on a direct
proof of this. It would give support to the conjecture in \cite{MMP}. In section \ref{S6} we will construct contact structures in some manifolds related to moment-angle manifolds.
\item The construction in \cite{MMP} is based on an open book decomposition of the manifold. We have still not related this open book structure with the one we have in the case of a moment-angle manifold.
\end{enumerate}
\end{rmk} 

\subsection {PS-Overtwisted Manifolds.}

\noi The definition of \emph{overtwistedness} in higher dimensions was given by K. Niederkr\"uger based in the existence of a \emph{plastikstufe} (see \cite{Klaus}).

\begin{defi}
 Let $(\vd,\alpha)$ be a cooriented $(2n+1)$-dimensional contact manifold, and let $S$ be a 
closed $(n-2)$-dimensional manifold. A \emph{plastikstufe} $\ps(S)$ with singular set $S$ in $\vd$ is an embedding of the $n$-dimensional manifold 
$$
\iota: \disc^2\times\s^1 \to \vd,
$$ 
that carries a (singular) Legendrian foliation given by the $1$-form $\beta:=\iota^\ast \alpha$ sa\-tis\-fying: 
\begin{itemize}
\item [a)]  The boundary $\partial \ps(S)$ of the plastikstufe is the only closed leaf.
 
\item [b)] There is an elliptic singular set at ${0}\times S$. 

\item [c)] The rest of the plastikstufe is foliated by an $\s^1$-family of stripes, each one diffeomorphic to $(0,1)\times S$, which are spanned between the singular set on one end and approach $\partial\ps(S)$ on the other side asymptotically.
\end{itemize} 
\end{defi}

 \noi A contact manifold is called \emph{$\ps$-overtwisted} if it admits the embedding of a plastikstufe. 

\begin{obs}
 In dimension three, the definition of $\ps$-overtwisted is identical to the standard definition of overtwistedness.
\end{obs}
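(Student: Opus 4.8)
The plan is to unwind the definition of a plastikstufe in the lowest relevant dimension, $2n+1=3$ (so $n=1$), and to recognize the resulting object as the familiar overtwisted disk. First I would note that in this case the plastikstufe degenerates to an embedded $2$-disk $D=\disc^2$ whose elliptic singular set $\{0\}\times S$ is a single point; and since the contact plane field $\xi=\ker\alpha$ is $2$-dimensional, the singular Legendrian foliation $\iota^\ast\alpha$ carried by $D$ is exactly the characteristic (singular) foliation $D_\xi$ induced on $D$ by $\xi$. Reading off conditions (a)--(c) in this situation, they say: $\partial D$ is the unique closed leaf of $D_\xi$; there is a single interior singularity, which is elliptic; and every other leaf belongs to an $\s^1$-family of arcs (the degenerate ``stripes'' $(0,1)\times S$), each issuing from the elliptic singularity and approaching $\partial D$ asymptotically. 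These are precisely the defining features of a standard overtwisted disk, so the first step produces a dictionary: \emph{a plastikstufe in a contact $3$-manifold is the same thing as an embedded disk whose characteristic foliation is of standard overtwisted type}.

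With this dictionary in hand, one implication is immediate: if $(\vd,\alpha)$ is $\ps$-overtwisted, then an embedded plastikstufe is an embedded disk whose characteristic foliation has the standard overtwisted form, so $(\vd,\alpha)$ contains an overtwisted disk and is overtwisted in the classical sense. For the reverse implication I would start from an overtwisted $(\vd,\alpha)$, i.e.\ one in which the characteristic foliation of some embedded disk has a limit cycle, and apply the classical normalization of overtwisted disks (due to Eliashberg; see also \cite{Klaus}) to produce, inside that disk, an embedded disk $D'$ with Legendrian boundary whose characteristic foliation is diffeomorphic to the standard model described above. By the dictionary, $D'$ then satisfies (a)--(c) and is an embedded plastikstufe, so $(\vd,\alpha)$ is $\ps$-overtwisted.

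The one non-formal ingredient is this last normalization step, which is not new: it is the classical fact that a limit cycle in the characteristic foliation of some disk already forces the presence of a genuine standard overtwisted disk. I therefore expect the main obstacle to be bookkeeping rather than mathematics --- deciding which of the several equivalent classical definitions of ``overtwisted'' one matches against, and invoking the equivalence among them --- after which the Observation follows by a direct comparison of definitions, with no further limit-cycle or singularity analysis needed beyond what is already in the literature.
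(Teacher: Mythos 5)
Your unwinding is correct and is exactly the intended justification: the paper states this Observation without any proof (it is a definitional remark imported from Niederkr\"uger's work), and in dimension three the singular set $S$ degenerates to a point, the plastikstufe to an embedded disk, and the induced singular Legendrian foliation to its characteristic foliation, so that conditions (a)--(c) are precisely the description of the standard overtwisted disk. You also correctly isolate the one nontrivial input needed for the converse direction --- Eliashberg's normalization producing a standard overtwisted disk from any disk whose characteristic foliation has a limit cycle --- which is the classical fact that makes the two definitions equivalent (rather than literally identical) and which the paper implicitly takes for granted.
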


\begin{center}
\begin{figure}[h]
\centerline{\includegraphics[height=3cm]{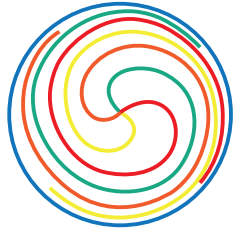}}
\caption{An overtwisted disk.}
\end{figure}
\end{center}   

\noi Following the ideas of F. Presas in the article \cite{Presas}, K. Niederk\"uger and O. Van Koert proved the next interesting results (see \cite{KO}):\\

\noi Let $\s^{2n-1}$ be the unit sphere in $\C^n$ with coordinates $Z=\left(z_1,\dots,z_n\right)\in \C^n$ and let $f$ be the polinomial 
$$
f:\C^n\to\C\quad\text{defined by}\quad \left(z_1,\dots,z_n\right)\mapsto z^2_{_1}+\dots +z^2_{_n}.
$$
The $1$-form 
$$
\alpha_-:=i\sum_{j=1}^n\left(z_jd\bar{z}_j-\bar{z}_jdz_j\right)-i\left(fd\bar{f}-\bar{f}df\right)  
$$ 
defines a contact structure on $\s^{2n-1}$ (see \cite[Proposition 7]{KO}).

\begin{theo}\cite[Corollary 4]{KO}
Every sphere $\s^{2n+1}$ with $n\geqslant 1$ supports a PS-overtwisted contact structure. More precisely, on $\s^{2n+1}$, with $n\geqslant 1$, exists a contact structure which admits the embedding of a plastikstufe PS$(\T^{n-1})$ (with $\T^0:=\{p\}$ and $\T^1:=\s^1$).
\end{theo}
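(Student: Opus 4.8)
I would realise the contact structure by the explicit $1$-form $\alpha_-$ of Proposition~7, now in $n+1$ complex variables, with $f(z)=z_1^2+\dots+z_{n+1}^2$ on $\C^{n+1}$ and $\s^{2n+1}=\{|z|=1\}$; that $\ker\alpha_-=:\xi_-$ is a contact structure is \cite[Prop.~7]{KO}, so the task reduces to embedding $\iota\colon\disc^2\times\T^{n-1}\hookrightarrow\s^{2n+1}$ with $\ker\iota^{\ast}\alpha_-$ the singular Legendrian foliation required in (a)--(c). The organising principle is the Milnor open book of $f$ on $\s^{2n+1}$: its page is the Milnor fibre $T^{\ast}\s^{n}$ of the $A_1$ singularity in $n+1$ variables, its binding is the Stiefel manifold $V_2(\R^{n+1})$ (the unit cotangent bundle of $\s^{n}$), and the \emph{sign} of the term $-i(fd\bar f-\bar f df)$ in $\alpha_-$ is precisely what makes $\xi_-$ the \emph{negative} version of this open book, the one whose monodromy is the inverse of the model Dehn twist along the zero section $\s^{n}\subset T^{\ast}\s^{n}$. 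A negative open book is exactly where one expects a plastikstufe, and the open-book spinning technique of \cite{KO} should supply it.

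\noindent\textbf{Step 1: the model Legendrian and the rotation symmetry.}
Writing $z=x+iy$ with $x,y\in\R^{n+1}$, I would first record that the real sphere $L_0:=\{y=0\}\cap\s^{2n+1}=\s^{n}$ is Legendrian for $\alpha_-$: on $L_0$ the first summand of $\alpha_-$ equals $2\sum_j(x_j\,dy_j-y_j\,dx_j)$, which vanishes on $TL_0$ since $y_j\equiv0$ and $dy_j|_{TL_0}=0$ there, while $df=2\sum_j z_j\,dz_j$ restricts on $TL_0$ to $2\sum_j x_j\,dx_j=0$, killing the $f$-summand as well. Since $f$ is homogeneous of degree two, the diagonal rotations $R_\theta\colon z\mapsto e^{i\theta}z$ preserve $\alpha_-$ (they fix its first summand and replace $f$ by $e^{2i\theta}f$), so every $L_\theta:=R_\theta(L_0)$ is again a Legendrian $n$-sphere, and $\bigcup_\theta L_\theta=\{\,|f|=1\,\}$ is exactly the locus where $|f|\le|z|^2$ is sharp, a mapping torus of the antipodal map of $\s^{n}$ lying in a collar of the binding. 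A short computation further shows that the generator $X$ of the $R_\theta$-action satisfies $\alpha_-(X)=2-4|f|^2$, so $X$ is tangent to $\xi_-$ precisely along $\{\,|f|=2^{-1/2}\,\}$; this singles out the region in which the spun leaves of the plastikstufe will be Legendrian.

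\noindent\textbf{Step 2: the plastikstufe.}
Inside the page $T^{\ast}\s^{n}$ I would choose a Lagrangian whose relevant boundary torus is a Legendrian copy of $\T^{n-1}$ — the natural candidate being (a half of) the conormal Lagrangian of an embedded co-oriented hypersurface $\T^{n-1}\subset\s^{n}$ in the zero section, such a hypersurface existing for every $n\ge1$ with the convention $\T^{0}=\{p\}$. Spinning this Lagrangian around the $\s^{1}$ of the open book, its limiting torus sits in the \emph{fixed} binding and hence collapses the spinning circle to a point; the swept-out set is then an embedded $\disc^2\times\T^{n-1}$, with $\{0\}\times\T^{n-1}$ the collapsed torus, $\s^{1}\times\T^{n-1}=\partial\ps$ the fully spun free boundary, and the $\s^{1}$-family of strips $(0,1)\times\T^{n-1}$ in between. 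Pulling back $\alpha_-$, one then verifies (a) that $\partial\ps$ is the unique closed leaf, (b) that the induced foliation is elliptic along $\{0\}\times\T^{n-1}$, and (c) that the remainder is the strip family asymptotic to the singular set and to $\partial\ps$. The models $n=1$ — where $(\s^3,\xi_-)$ is an overtwisted $\s^{3}$ and $\ps(\T^{0})$ is an overtwisted disk — and $n=2$ fix the normalisations.

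\noindent\textbf{Main obstacle.}
Producing \emph{some} submanifold foliated by Legendrians is cheap — the torus $\{|f|=1\}$ of Step~1 already is one — so the content lies in arranging the correct local models: that near the singular torus the Legendrian foliation is genuinely elliptic (not hyperbolic or of higher order) and that no extra closed leaf appears. This demands control of $\iota^{\ast}\alpha_-$ to second order transverse to $\{0\}\times\T^{n-1}$, together with a careful choice of the spinning isotopy so that the inverse monodromy matches the chosen Lagrangian up and the spun family closes into a genuine plastikstufe rather than an immersed family of strips; this is the step I expect to cost the most work. A structurally cleaner alternative is to invoke the general fact that the negative stabilisation of any contact open book is $\ps$-overtwisted, apply it to $(\s^{2n+1},\xi_{\mathrm{std}})$, and read $\ps(\T^{n-1})$ off the co-core of the attached Weinstein handle; but even then the identification of the singular set with the torus $\T^{n-1}$ still has to be carried out by hand.
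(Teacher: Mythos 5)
First, a point of comparison: the paper does not prove this statement at all --- it is imported verbatim as Corollary~4 of \cite{KO}, with only the defining $1$-form $\alpha_-$ reproduced beforehand (its contactness being \cite[Proposition 7]{KO}). So there is no internal argument to measure your plan against; the benchmark is the proof in \cite{KO} itself, and your outline is faithful to it in spirit: the relevant geometry is indeed the negative version of the Milnor open book of $f=z_1^2+\dots+z_{n+1}^2$, with page $T^{\ast}\s^n$, binding the Stiefel manifold, and inverse Dehn twist monodromy. The computations you do carry out in Step~1 are correct: $i\left(z_jd\bar z_j-\bar z_jdz_j\right)=2\left(x_jdy_j-y_jdx_j\right)$, the real sphere $\{y=0\}\cap\s^{2n+1}$ is Legendrian, $\alpha_-$ is invariant under $z\mapsto e^{i\theta}z$, and $\alpha_-(X)=2-4|f|^2$ for the generator $X$ of that circle action.

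As a proof, however, the proposal has a genuine gap, and you name it yourself: everything that makes the swept-out $\disc^2\times\T^{n-1}$ a plastikstufe rather than merely a submanifold foliated by Legendrian pieces --- the elliptic local model of $\iota^{\ast}\alpha_-$ along $\{0\}\times\T^{n-1}$, the fact that $\partial\ps(\T^{n-1})$ is the \emph{only} closed leaf, and the embeddedness (not just immersedness) of the spun family --- is deferred to ``control of $\iota^{\ast}\alpha_-$ to second order.'' That is precisely where the content of \cite[Corollary 4]{KO} lies, and in \cite{KO} it is not handled by an abstract spinning or stabilisation argument but by writing down an explicit parametrisation of $\disc^2\times\T^{n-1}$ adapted to $\alpha_-$ and computing the pullback directly; the ``negative stabilisations are $\ps$-overtwisted'' principle you offer as an alternative was a consequence of, not an input to, this kind of explicit construction at the time. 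A secondary caution: your candidate singular set, the boundary torus of a conormal Lagrangian of a hypersurface $\T^{n-1}\subset\s^n$, must land \emph{in the binding and be fixed by the spinning} for the circle to collapse to the disk factor $\disc^2$; for $n=1$ the hypersurface $\T^0=\{p\}$ degenerates and the argument has to be checked separately against the known overtwisted $(\s^3,\xi_-)$. Until the local normal form at $\{0\}\times\T^{n-1}$ is actually exhibited, the proposal is a correct strategy but not yet a proof.
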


\noi As a consequence of Theorem \ref{Bour}, we have the next corollary:

\begin{coro}
For $n>3$,  moment-angle manifolds such as $\ZLam=\s^{2n-5}\times\T^{2m}$ are $\ps$-overtwisted contact manifolds. 
\end{coro}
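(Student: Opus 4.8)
\noindent The contact structure on $\ZLam=\s^{2n-5}\times\T^{2m}$ is already the iterated Bourgeois structure recorded right after Theorem~\ref{Bour}, obtained by applying that theorem $m$ times to a contact structure on the sphere factor; the new content is that it may be taken to be $\ps$-overtwisted. The plan is therefore: (1) start the iteration not from an arbitrary contact structure on $\s^{2n-5}$ but from the $\ps$-overtwisted one supplied by Corollary~4 of \cite{KO}; (2) show that Bourgeois' construction, applied to a $\ps$-overtwisted contact manifold, again produces a $\ps$-overtwisted contact manifold; and (3) iterate. For (1), write $2n-5=2(n-3)+1$ with $n-3\geqslant 1$ since $n>3$, so $\s^{2n-5}$ falls under Corollary~4 of \cite{KO} and carries a contact structure $\xi_0$ admitting an embedded plastikstufe $\ps(\T^{n-4})$ (with the conventions $\T^0=\{p\}$, $\T^1=\s^1$).

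\medskip
\noindent For (2), let $(V,\alpha)$ be a closed $\ps$-overtwisted contact manifold with embedded plastikstufe $\iota\colon\disc^2\times S\hookrightarrow V$. By Giroux's theorem choose an open book for $V$ adapted to $\alpha$ and let $\alpha_B=\alpha+f_1\,d\theta_1+f_2\,d\theta_2$ be the associated Bourgeois contact form on $V\times\T^2$, where $(\theta_1,\theta_2)$ are coordinates on $\T^2$ and $f_1,f_2\colon V\to\R$ are the functions furnished by the construction, whose common zero set is the binding. I would isotope $\iota$ into good position with respect to the adapted open book and then \emph{spin} the plastikstufe along one of the $\T^2$-circles, following the model of Bourgeois' normal form near the binding, to obtain an embedding of $\disc^2\times(S\times\s^1)$ into $V\times\T^2$ whose induced singular Legendrian foliation satisfies the three defining conditions of a plastikstufe $\ps(S\times\s^1)$: a unique closed leaf equal to the boundary, an elliptic singular set $\{0\}\times S\times\s^1$, and a complementary $\s^1$-family of stripes each diffeomorphic to $(0,1)\times S\times\s^1$. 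In essence this is the mechanism behind the examples of F. Presas \cite{Presas} and of Niederkr\"uger--van Koert \cite{KO}, so one may either carry it out directly or quote it. Granting (2), iterating $m$ times along
$$\s^{2n-5}\ \rightsquigarrow\ \s^{2n-5}\times\T^{2}\ \rightsquigarrow\ \cdots\ \rightsquigarrow\ \s^{2n-5}\times\T^{2m}$$
equips $\ZLam$ with a contact structure carrying an embedded plastikstufe $\ps(\T^{n-4}\times\T^{m})$, which is what we want.

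\medskip
\noindent The main obstacle is step (2). It is essential that $V$ already contain a plastikstufe, since Bourgeois' construction applied to an arbitrary contact manifold need not yield a $\ps$-overtwisted one, so the argument cannot be purely formal. The delicate points are (i) isotoping the given plastikstufe of $V$ into a position compatible with the adapted open book — in particular controlling it near the binding, where $f_1$ and $f_2$ are small — while keeping it embedded, and (ii) verifying, after spinning it into the extra $\s^1$-direction, that no new closed leaf is created, so that the enlarged boundary remains the \emph{unique} closed leaf, and that the transverse model near $\{0\}\times S\times\s^1$ stays of elliptic type. This forces one to track Bourgeois' explicit normal form near the binding, which is the technical heart of the argument; the remaining steps — the dimension bookkeeping, closedness at each stage of the iteration, and the iteration itself — are routine.
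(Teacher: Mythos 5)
Your route is genuinely different from the one the paper relies on, and the difference is exactly where the gap sits. The corollary only asserts that the smooth manifold $\ZLam=\s^{2n-5}\times\T^{2m}$ admits \emph{some} $\ps$-overtwisted contact structure; it does not assert that the iterated Bourgeois structure itself is $\ps$-overtwisted. The argument the paper is drawing on is the soft combination of two quoted results: Theorem~\ref{Bour}, applied $m$ times to any contact structure on $\s^{2n-5}$, shows that $\ZLam$ carries a contact structure; and the main theorem of Niederkr\"uger--van Koert \cite{KO} (the very title of that paper) says that every closed contact manifold can be given a non-fillable, i.e.\ $\ps$-overtwisted, contact structure --- concretely by taking the contact connected sum with their $\ps$-overtwisted sphere of the same dimension, which does not change the diffeomorphism type and visibly retains the plastikstufe in that summand. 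No interaction between the plastikstufe and the Bourgeois construction is needed.

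Your step (2) --- that the Bourgeois construction applied to a $\ps$-overtwisted $(V,\alpha)$ again yields a $\ps$-overtwisted manifold --- is the genuine gap. You correctly flag the delicate points (positioning the plastikstufe relative to the adapted open book, control near the binding where $f_1,f_2$ vanish, uniqueness of the closed leaf after spinning) but do not resolve them, and they cannot be resolved in general: it is now a theorem of Bowden, Gironella and Moreno that Bourgeois contact structures on $M^{3}\times\T^{2}$ are always tight, hence never $\ps$-overtwisted, even when $M^{3}$ is overtwisted. In particular, for $n=4$ your very first iteration step $\s^{3}\rightsquigarrow\s^{3}\times\T^{2}$ already fails. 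The plastikstufe must therefore be inserted \emph{after} the contact structure has been produced, via the connected-sum theorem of \cite{KO}, rather than propagated through the Bourgeois construction.
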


\section{Higher Dimensional Contact Manifolds.}\label{S6}

\noi Theorem 2 of \cite{SLM} was extended in \cite{GL} to the case where the manifold is given by two quadratic forms which are not necessarily simultaneously diagonalizable. This includes the manifolds we construct now:\\

\noi Let $n> 3$ and $s\geqslant 1$ be two integers and $\Lam=\left(\lambda_1,\dots,\lambda_n\right)$ be an admissible configuration with $\lambda_1\in \C$. Now consider the manifold $\ZLamss$ define by the following equations:
 
\begin{eqnarray}\label{E1}
F_s(X)&:=&\sum_{r=1}^s w^2_{_r}+\sum_{j=1}^n\lambda_j|z_j|^2=0,
\\
\label{E2}
\rho_s(X)&:=&\sum_{r=1}^s|w_r|^2+\sum_{j=1}^n|z_j|^2=1,
\end{eqnarray}
\noi where $X=\left(w_1,\dots,w_s,z_1,\dots,z_n\right)$. $\ZLamss$ has real dimension $2n+2s-3>5$.\\

\noi The topology of the manifolds $\ZLamss$ is related to the topology
of the associated moment-angle manifolds $\ZLam$ as shown by the following theorem. \\

\begin{theo}[\cite{GL}]
The manifold $\ZLamss$ is diffeomorphic to:
$$\underset{j=1}{\overset{2\ell+1}{\sharp}}\left(\s^{2d_j+s-1}\times\;\s^{2n-2d_j+s-2}\right),$$
where $d_j=n_j+\dots+n_{j+\ell-1}$.
\end{theo}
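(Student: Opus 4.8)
\noi \emph{Proof strategy.} The plan is to reduce the statement to the structure theorem of \cite{GL}, which describes intersections of two real quadrics on a sphere \emph{without} assuming the two quadrics are simultaneously diagonalizable. Writing $X=(w_1,\dots,w_s,z_1,\dots,z_n)$, $w_r=a_r+ib_r$, and splitting the complex equation \eqref{E1} into its real and imaginary parts, $\ZLamss$ is cut out on the unit sphere $\{\rho_s=1\}$ by the two real quadratic forms $Q_1:=\mathrm{Re}\,F_s$ and $Q_2:=\mathrm{Im}\,F_s$. In the $z$-directions $Q_1$ and $Q_2$ act diagonally, by $\mathrm{Re}\,\lambda_j$ and $\mathrm{Im}\,\lambda_j$, so these directions carry exactly the planar configuration $\Lam$ with its multiplicities; in each $w_r$-plane $Q_1$ restricts to $a_r^2-b_r^2$ and $Q_2$ to $2a_rb_r$, and these do not commute. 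Hence $\ZLamss$ falls outside the diagonal ($k=2$) theory of \cite{SLM} but inside the framework of \cite{GL}, and the task is to compute the combinatorial invariant that \cite{GL} attaches to the pencil $\mu Q_1+\nu Q_2$ restricted to $\{\rho_s=1\}$.

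\noi First I would verify that the system $\{F_s=0,\rho_s=1\}$ is a regular intersection: the gradients of $\rho_s,Q_1,Q_2$ involve the $w_r$ only through the independent combinations $w_r,\bar w_r$ (which never all vanish since $\rho_s=1$) and involve the $z_j$ exactly as for the moment-angle manifold $\ZLam$, so a failure of regularity would put the origin in the convex hull of two of the $\lambda_j$, contradicting the weak hyperbolicity of $\Lam$; the $w_r$-blocks only add ``positive directions'' and cannot spoil regularity. Second, I would evaluate the \cite{GL}-invariant. Its $z$-part is precisely the data governing $\ZLam$: the odd cyclic partition $n=n_1+\dots+n_{2\ell+1}$ and the integers $d_j=n_j+\dots+n_{j+\ell-1}$ of section 7. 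Each $w_r$-block is ``maximally rotating'': as $(\mu,\nu)$ runs over the circle the eigenvalues of the restriction of $\mu Q_1+\nu Q_2$ to that plane are $\pm\sqrt{\mu^2+\nu^2}$, so the block contributes one positive and one negative direction symmetrically to every half-space count in \cite{GL}'s bookkeeping. Its effect on the classification is therefore uniform in the summand index $j$: each sphere factor of each summand gains one real dimension per variable $w_r$. Feeding this into \cite{GL}'s description of the diffeomorphism type turns the moment-angle answer $\#_{j}\bigl(\s^{2d_j-1}\times\s^{2n-2d_j-2}\bigr)$ into $\#_{j=1}^{2\ell+1}\bigl(\s^{2d_j+s-1}\times\s^{2n-2d_j+s-2}\bigr)$, which is the assertion.

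\noi A more self-contained alternative is induction on $s$, in the open-book spirit of sections 2--3. For $s=0$ one has $\ZLamss={\pmb{Z}}^{^\C}_{(\Lam,n,0)}=\ZLam$ and the result is the moment-angle case proved in section 7. For the inductive step, the locus $\{w_s=0\}\subset\ZLamss$ is a codimension-two copy of ${\pmb{Z}}^{^\C}_{(\Lam,n,s-1)}$ with trivial normal bundle, and $X\mapsto\arg w_s$ exhibits its complement as a fibration over $\s^1$ for which $\arg w_s$ is the normal angular coordinate; one then identifies the page and checks that the (``double-speed'', because of $w_s^2$) monodromy introduces no new connected-sum summands, so that passing from the $(s-1)$-manifold to the $s$-manifold raises each sphere factor by one dimension.

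\noi The main obstacle is the bookkeeping in the second step above --- equivalently, the monodromy analysis in the inductive version: one must read off from \cite{GL} exactly how the non-diagonal $w_r^2$-blocks, which have no counterpart in the diagonal $k=2$ theory of \cite{SLM}, enter the combinatorial classification, and confirm that they do nothing beyond the uniform shift $\s^{2d_j-1}\times\s^{2n-2d_j-2}\rightsquigarrow\s^{2d_j+s-1}\times\s^{2n-2d_j+s-2}$. By comparison, the regularity check and the $s=0$ base case are routine consequences of the weak hyperbolicity of $\Lam$.
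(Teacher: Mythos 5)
The paper does not actually prove this statement: it is imported verbatim from \cite{GL} (listed as ``in preparation''), and the only in-paper justification is the remark opening section \ref{S6} that \cite{GL} extends Theorem 2 of \cite{SLM} to pairs of quadratic forms that are not simultaneously diagonalizable. Your primary strategy therefore coincides with the paper's own treatment: you correctly split $F_s$ into $\mathrm{Re}\,F_s$ and $\mathrm{Im}\,F_s$, observe that each $w_r$-plane contributes the non-commuting pair $a_r^2-b_r^2$, $2a_rb_r$ while the $z$-directions carry the diagonal configuration $\Lam$, and conclude that $\ZLamss$ lies in the class covered by \cite{GL} rather than by the diagonal $k=2$ theory of \cite{SLM}; the regularity check and the dimension count $\left(2d_j+s-1\right)+\left(2n-2d_j+s-2\right)=2n+2s-3$ are also consistent. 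To that extent there is nothing to compare, but note that what you honestly label ``the main obstacle'' --- showing that each rotating $w_r$-block produces exactly the uniform shift $\s^{2d_j-1}\times\s^{2n-2d_j-2}\rightsquigarrow\s^{2d_j+s-1}\times\s^{2n-2d_j+s-2}$ in the connected-sum decomposition --- is the entire mathematical content of the theorem, and neither your proposal nor this paper supplies it; it lives in \cite{GL}. Your heuristic that each block contributes ``one positive and one negative direction symmetrically to every half-space count'' is a plausible guess at how that bookkeeping goes, not an argument.

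A concrete warning about your inductive alternative: the claim that $X\mapsto\arg w_s$ fibres $\ZLamss\setminus\{w_s=0\}$ over $\s^1$ with $\arg w_s$ as normal angular coordinate does not follow the way it does in Theorem \ref{bookR}. There the open book exists because the circle action rotating the repeated coordinate pair preserves the manifold (only $x_0^2+x_1^2$, resp.\ $|z_i|^2$, enters the equations); here the term $w_s^2$ is \emph{not} invariant under $w_s\mapsto e^{i\theta}w_s$, so there is no such action, and on the fibre the relation $w_s^2=-\sum_{r<s}w_r^2-\sum_j\lambda_j|z_j|^2$ forces $\arg w_s$ to be determined (up to the sign ambiguity $w_s\mapsto -w_s$) by the other coordinates. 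One would instead need a Milnor-fibration-type argument together with an honest computation of the resulting order-two monodromy, and ``the monodromy introduces no new connected-sum summands'' is precisely the kind of statement that can fail; as written, that route has a genuine gap, whereas the first route is sound but is a reduction to \cite{GL} rather than a proof.
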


\noi We will show that for every set of $n+s$ positive numbers $a_1,\dots,a_s,b_1,\dots,b_n$ the $1$-form 
$$
\alpha:= i\left[\sum_{r=1}^s\left[a_r\left(w_rd\bar{w}_r-\bar{w}_rdw_r\right)\right]+\sum_{j=1}^n\left[b_j\left(z_j\bar{z}_j-\bar{z}_jdz_j\right)\right]\right]
 $$
can be deformed into a contact form on the manifold $\ZLamss$, by an arbitrarily small $\C^\infty$ perturbation. 

\subsection{Case $s=1$.}

\begin{lema}
For all $X=\left(w_1,z_1,\ldots,z_n\right)\in\ZLamsu$ the $1$-form $\alpha$ restricted to the tangent space $\tange_{_{X}}\left(\ZLamsu\right)$ is a nontrivial form.
\end{lema}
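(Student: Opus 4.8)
The plan is to produce, at every point $X\in\ZLamsu$, a single tangent vector on which $\alpha$ does not vanish; this suffices, since a linear form that is nonzero on one tangent vector is nontrivial on the whole tangent space. The vector I would use is the infinitesimal generator of the residual torus symmetry of the variety.

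First I would record which circle actions leave $\ZLamsu$ invariant. The torus acting by $z_j\mapsto e^{i\theta_j}z_j$ while fixing $w_1$ preserves both \eqref{E1} and \eqref{E2}, since it affects the $z_j$ only through the moduli $|z_j|^2$; note that the \emph{full} diagonal rotation, which also rotates $w_1$, does \emph{not} preserve $F_1$ because of the term $w_1^2$, so one must stay inside this torus. Taking all $\theta_j$ equal gives the flow $z_j\mapsto e^{it}z_j$, $w_1\mapsto w_1$, whose generator is the vector field
$$
V \;=\; \sum_{j=1}^n\bigl(i\,z_j\,\partial_{z_j}-i\,\bar z_j\,\partial_{\bar z_j}\bigr),
$$
which is everywhere tangent to $\ZLamsu$.

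Next I would evaluate $\alpha$ on $V$. Since $dw_1(V)=0$, only the $z$-part of $\alpha$ contributes, and a direct computation gives $\alpha(V)=2\sum_{j=1}^n b_j\,|z_j|^2$. Finally I would observe that this is strictly positive at every point of $\ZLamsu$: the $b_j$ are positive by hypothesis, and the $z_j$ cannot all vanish, for otherwise \eqref{E1} would force $w_1=0$, contradicting \eqref{E2}. Hence $\alpha(V)>0$ everywhere; in particular $V$ is nowhere zero and $\alpha$ restricted to each $\tange_X(\ZLamsu)$ is a nontrivial $1$-form.

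There is no serious obstacle in the lemma itself: the only point requiring care is the choice of an invariant vector field — one must use the rotation of the $z$-coordinates alone rather than the full diagonal rotation — together with the elementary remark that the $z_j$ cannot simultaneously vanish on $\ZLamsu$. The genuine difficulty of this section lies in the subsequent step, namely showing that the positive confoliation $\ker\alpha$ can be $C^\infty$-perturbed into a genuine contact structure; the present lemma only guarantees that $\ker\alpha$ is an honest hyperplane field to start from.
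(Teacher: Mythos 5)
Your argument is correct, but it takes a genuinely different route from the paper. The paper argues by contradiction via Lagrange multipliers: $\alpha$ vanishes on $\tange_{_X}\left(\ZLamsu\right)$ if and only if $\alpha_X$ lies in the span of $d\rho_1$ and the real and imaginary parts of $dF_1$ (encoded by the parameters $T\in\C$ and $\mu\in\R$), and comparing coefficients of $dz_j$, $d\bar z_j$, $dw_1$, $d\bar w_1$ forces the point to be the origin, which is not on the variety. You instead exhibit an explicit witness: the generator $V$ of the circle action $z_j\mapsto e^{it}z_j$, $w_1\mapsto w_1$ (correctly excluding $w_1$ from the rotation, since $w_1^2$ is not invariant), which is tangent to $\ZLamsu$ and satisfies $\alpha(V)=2\sum_j b_j|z_j|^2>0$ because the $z_j$ cannot all vanish on the variety. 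Your computation checks out, and it buys something the paper's does not: a globally defined, nowhere-zero vector field positively transverse to $\ker\alpha$, together with a quantitative lower bound for $\alpha$ --- both potentially useful later when one needs Legendrian accessibility. What the paper's coefficient comparison buys in exchange is that the same multiplier computation is immediately recycled in the next step, where the vectors $v(T,\mu)$ spanning $\ker(d\alpha)$ are determined; your shortcut proves the lemma but does not set up that bookkeeping. One small caveat if you wanted to push the paper's own phrasing: comparing coefficients alone only forces all $z_j=0$ (the $w_1$ equations admit nonzero solutions for suitable $T$), and one must then invoke $F_1=0$ and $\rho_1=1$ to exclude the point; your version avoids this subtlety entirely.
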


 \noi We will only consider the case where $a_1=1$ and $b_1=1,\,\,\,j=1,\dots, n$ since the other cases are completely analogous. 
 
\begin{proof}
 For $X=\left(w_1,z_1,\ldots,z_n\right)\in\ZLamsu$, the linear function 
$$
\alpha_{_X}:\tange_{_{X}}\left(\ZLamsu\right)\to\R
$$ 
\noi is trivial if and only if there exist $T\in\C$ and $\mu\in\R$ such that: 

\begin{eqnarray}\label{5}
\nonumber i\left[\left(w_1d\bar{w}_1-\bar{w}_1dw_1\right)+\sum_{j=1}^n \left(z_jd\bar{z}_j-\bar{z}_jdz_j\right)\right]\\
\nonumber =T\left[2w_1dw_1+ \sum_{j=1}^n\left[\lambda_j\left(\bar{z}_jdz_j+z_jd\bar{z}_j\right)\right]\right]\\
\nonumber+\overline{T}\left[2\bar{w}_1d\bar{w}_1+\sum_{j=1}^n\left[\bar{\lambda}_j\left(z_jd\bar{z}_j+\bar{z}_jdz_j\right)\right]\right]\\
+\mu\left[\left(w_1d\bar{w}_1+\bar{w}_1dw_1\right)+\sum_{j=1}^n \left(z_jd\bar{z}_j+\bar{z}_jdz_j\right)\right]. 
\end{eqnarray}

\noi Comparing coefficients in equation \eqref{5} we conclude that 
the unique point satisfying the last equations is the origin $(0, 0,\dots,0)$. However,  the origin is not in $\ZLamsu$. 
We conclude that $\alpha$ is nontrivial. 
\end{proof}

\noi We will denote by $\xi_\alpha(X)$ the kernel of $\alpha$ at the point $X\in\ZLamsu$.\\

\noi The only vectors $v\in\tange\left(\ZLamsu\right)$ where 
$\iota_vd{\alpha}=0$ are of the form: 
$$
v(T,\mu):=-i \Big(2\overline{T}\bar{w}_1+\mu w_1, (2\Re(T\lambda_1)+\mu)z_1,\ldots, (2\Re(T\lambda_n)+\mu)z_n\Big),
$$

\noi where $T\in\C$ and $t\in\R$.\\

\noi If $v(T,\mu)$ is tangent to the sphere $\s^{2n-1}$ and $w_0\neq0$ then it follows that $T$ must be of the form 
$$T=t\bar{w}^2_{_1}$$ 
with $t\in\R$.\\

\noi The condition $dF_1(v(T,\mu))=0$, when $w_1\neq0$ and $v(T,\mu)\in\s^{2n+1}$ implies:

$$
2tw^2_{_1}|w_1|^2+\mu w^2_{_1}=0,
$$

\noi which implies since $w_1\neq0$ that
$$
\mu=-2t|w_1|^2$$
\noi if $w_1=x+iy$ we have:
$$
\mu=-2t\left(x^2+y^2\right).
$$

\noi Therefore, if $\mu=0$ implies $t=0$ since, by hypothesis, what is inside the  parentheses is  positive.\\

\noi Vectors of the form
$$
v(T,\mu)=-i \Big(2\overline{T}{\bar{w}_1}+\mu w_1, (2\Re(T\lambda_1)+\mu)z_1,\ldots, (2\Re(T\lambda_n)+\mu)z_n\Big),
$$
\noi which are in $\ker\alpha$,  imply that $\mu=0$. Hence, if $w_1\neq0$ we must have that $t=0$ also.\\

\noi  For $w_1=0$ the vectors of the form $v(T,\mu)$ with $T\in\C$ and $\mu\in\R$ are in $\ker (d\alpha)$. Hence $\ker (d\alpha)$ has real dimension three.\\

\noi On the other hand,  on the set of points with $w_1=0$, the vectors of the form:
$$
-i \big(0,2\Re(T\lambda_1)z_1,\ldots, 2\Re(T\lambda_n)z_n \big), \,\,\, T\in\C
$$
\noi are all in $\ker(\alpha)\cap\ker(d\alpha)$. \\

\noi Hence $\ker(\alpha)\cap\ker (d\alpha)$ is a two dimensional vector space. We will denote this 2-dimensional vector space at the point
$X=(0,z_1,\ldots,z_n)$ by $\Pi(X)$.\\

\noi Therefore, in the set of points of $\ZLamsu$ such that $w_1\neq0$,  the form $\alpha$ is a contact form.\\

 \noi For a generic set of admissible configurations $\Lam=\left(\lambda_1,\dots, \lambda_n\right)$  the intersection of $\ZLamsu$ with the complex hyperplane with equation $w_1=0$ is a real codimension-two submanifold $W$ of $\ZLamsu$ (i.e. a submanifold of $\C^{n+1}$ of dimension 2n-3)  and $\Pi(X)\subset\tange_{_X}(W)$.

\begin{rmk}
 $W$ is essentially the moment-angle manifold $\ZLamu\subset\C^n$ where $\Lam'=\left(\lambda_1,\dots,\lambda_n\right)$.
 \end{rmk}

\subsection{Case $s>1$.}

 \begin{lema}
 For all $X=\left(w_1,\dots,w_s,z_1,\dots,z_n\right)\in\ZLamss$ the $1$-form $\alpha$ restricted to the tangent space $\tange_{_X}\left(\ZLamss\right)$ is a non trivial form.
\end{lema}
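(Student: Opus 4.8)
\noi The plan is to imitate the proof of the previous lemma, inserting one extra averaging step to cope with the several variables $w_1,\dots,w_s$. Since $\ZLamss$ is cut out of $\C^{n+s}$ by the (real-valued) functions $\Re F_s,\ \Im F_s$ and $\rho_s$, and these functions are functionally independent along $\ZLamss$, the tangent space $\tange_{_X}(\ZLamss)$ is the common kernel of their differentials at $X$; hence $\alpha_{_X}$ vanishes identically on $\tange_{_X}(\ZLamss)$ if and only if $\alpha$ lies in the real span of those differentials at $X$, i.e.\ there exist $T\in\C$ and $\mu\in\R$ with
$$
\alpha \;=\; T\,dF_s \;+\; \overline{T}\,d\bar F_s \;+\; \mu\, d\rho_s
$$
at the point $X$. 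First I would write the differentials out explicitly,
$$
dF_s=\sum_{r=1}^s 2w_r\,dw_r+\sum_{j=1}^n\lambda_j\bigl(\bar z_j\,dz_j+z_j\,d\bar z_j\bigr),
$$
$$
d\rho_s=\sum_{r=1}^s\bigl(w_r\,d\bar w_r+\bar w_r\,dw_r\bigr)+\sum_{j=1}^n\bigl(z_j\,d\bar z_j+\bar z_j\,dz_j\bigr),
$$
with $d\bar F_s$ the conjugate of $dF_s$, and then compare the coefficients of the independent $1$-forms $dw_r,\ d\bar w_r,\ dz_j,\ d\bar z_j$ on the two sides of the displayed identity.

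\noi Comparing the coefficients of $dz_j$ gives $-i\,b_j\,\bar z_j=\bigl(2\Re(T\lambda_j)+\mu\bigr)\bar z_j$, so whenever $z_j\neq 0$ one would obtain $-i\,b_j=2\Re(T\lambda_j)+\mu$; since $b_j>0$ the left-hand side is a nonzero purely imaginary number while the right-hand side is real, which is impossible. Hence $z_j=0$ for every $j$. Comparing the coefficients of $dw_r$ gives $-i\,a_r\,\bar w_r=2T w_r+\mu\,\bar w_r$ for each $r$; here the single-variable trick available when $s=1$ no longer applies, so instead I would multiply this relation by $w_r$, sum over $r$, and substitute the defining equations: once all $z_j=0$, \eqref{E1} and \eqref{E2} read $\sum_{r}w_r^2=0$ and $\sum_{r}|w_r|^2=1$. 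The sum then collapses to $-i\sum_{r}a_r|w_r|^2=\mu$; since the left side is $-i$ times a nonnegative real number while $\mu$ is real, both must vanish, so $\mu=0$ and $\sum_{r}a_r|w_r|^2=0$, and since every $a_r>0$ this forces $w_r=0$ for all $r$.

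\noi Consequently the only point $X$ at which $\alpha_{_X}$ could be trivial would be the origin, but $\rho_s(0,\dots,0)=0\neq1$, so the origin does not lie on $\ZLamss$ --- a contradiction. Therefore $\alpha_{_X}$ is a nontrivial form for every $X\in\ZLamss$. As in the case $s=1$, none of this uses the particular values of the positive constants, so one may as well assume $a_r=b_j=1$ from the outset. The one genuinely new ingredient compared with the previous lemma, and the step I expect to require the most care, is the passage from the per-coordinate relations $-i\,a_r\bar w_r=2Tw_r+\mu\bar w_r$ to a contradiction: it is precisely there that averaging over $r$, the positivity of the $a_r$, and the sphere equation $\rho_s=1$ must be combined.
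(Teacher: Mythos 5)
Your proof is correct, and its first half is exactly the paper's setup: triviality of $\alpha$ on $\tange_{_X}\left(\ZLamss\right)$ is recast as $\alpha$ lying in the real span of $dF_s$, $d\bar{F}_s$, $d\rho_s$, i.e.\ the existence of $T\in\C$, $\mu\in\R$ solving the displayed identity, and the $dz_j$-coefficients force $z_j=0$ for all $j$. Where you genuinely diverge is the endgame. The paper stops the coefficient comparison there: it notes that the surviving candidate points lie in the locus $z_1=\dots=z_n=0$, where the defining equations become $\sum_r w_r^2=0$, $\sum_r|w_r|^2=1$ (so at least two $w_r$ are nonzero), recognizes this as a Brieskorn manifold, and cites Lutz--Meckert to the effect that $\alpha$ restricts to a contact form on it --- in particular $\alpha$ is already nonzero on that smaller tangent subspace, hence on $\tange_{_X}\left(\ZLamss\right)$. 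You instead finish the linear algebra by hand: multiplying $-ia_r\bar{w}_r=2Tw_r+\mu\bar{w}_r$ by $w_r$, summing over $r$, and substituting the two constraints to get $-i\sum_r a_r|w_r|^2=\mu$, whence $\mu=0$, all $w_r=0$, and the point would be the origin, which is off the manifold. Your route is self-contained and more elementary (no external citation, no contact geometry on the Brieskorn link needed), and it actually shows the coefficient equations have \emph{no} solution on $\ZLamss$ at all; the paper's route is shorter on the page and ties the argument to known contact structures on Brieskorn manifolds. Both are valid.
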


\noi As in the case $s=1$, we will consider  $a_r=b_j=1$ for all $r\in (1,\dots,s)$ and $j\in (1,\dots,n)$ since the other cases are completely analogous.

\begin{proof}
For $X=\left(w_1,\dots,w_s,z_1,\dots,z_n\right)\in\ZLamss$, the linear function
$$
\alpha:\tange_{_X}\left(\ZLamss\right)\to\R$$
 is trivial if and only if there exist $T\in \C$ and $\mu\in\R$ such that 

\begin{eqnarray}
\nonumber i\left[\sum_{r=1}^s\left(w_rd\bar{w}_r-\bar{w}_rdw_r\right)+\sum_{j=1}^n\left(z_jd\bar{z}_j-\bar{z}_jdz_j\right)\right]\\
\nonumber =T\left[\sum_{r=1}^s 2w_rdw_r+\sum_{j=1}^n\left[\lambda_j\left(\bar{z}_jdz_j+z_j\bar{z}_j\right)\right]\right] \\
\nonumber +\overline{T}\left[\sum_{r=1}^s 2\bar{w}_rd\bar{w}_r+\sum_{j=1}^n\left[\lambda_j\left(z_jd\bar{z}_j-\bar{z}_jdz_j\right)\right]\right] \\
+\mu \left[\sum_{r=1}^s \left(w_rd\bar{w}_r+\bar{w}_rdw_r\right)+\sum_{j=1}^n\left(z_jd\bar{z}_j+\bar{z}_jdz_j\right)\right].
\end{eqnarray}

\noi  Comparing coefficients we have that the points satisfying the last equations are of the form $\left(w_1,\dots,w_s,0\dots,0\right)$ where at least two $w_r$, $w_{r'}$ not zero, $r\not=r'$, $r,r'\in\{1,\dots,s\}$.  In  this cases, the equations $F_s(X)=0$ and $\rho_s(X)=1$ defines a Brieskorn manifold and $\alpha$ is a contact form on that manifolds \cite{LM}, in particular is non trivial on it.\\

\noi We conclude that $\alpha$ is non trivial on $\ZLamss$.
\end{proof}

\noi We will be denote by $\xi_\alpha(X)$ the kernel of $\alpha$ at the point $X\in\ZLamss$.\\

\noi  The unique  vectors $v\in\tange\left(\ZLamss\right)$ where $\iota_vd\alpha=0$ are of the form:

$$
v(T,\mu):=
$$
$$
-i\left(2\overline{T}\bar{w}_1+\mu w_1,\dots,2\overline{T}\bar{w}_s+\mu w_s,(2\Re(T\lambda_1)+\mu)z_1,\dots,(2\Re(T\lambda_n)+\mu)z_n\right),
$$
\noi where $T\in\C$ and $\mu\in\R$.\\

\noi If $v(T,\mu)$ is tangent to the sphere $\s^{2n+2s-1}$ and $w_r\not= 0$ for all $r\in (1,\dots,s)$  then it follows that $T$ must be of the form 
$$T=t\sum_{r=1}^s\bar{w}^2_{_r},\;\;\; t\in\R.$$

\noi The condition $dF_s(v)=0$ when $w_r\not= 0$ for all $r$ and $v(T,\mu)\in\s^{2n+2s-1}$ implies:

$$
2t\sum_{r=1}^s\left(\sum_{r=1}^s w^2_{_r}\right)|w_r|^2+\mu\sum_{r=1}^sw^2_{_r}=0,
$$
which implies, since $w_r\not=0$, that
$$
\mu=-2t\sum_{r=1}^s|w_r|^2.
$$

\noi if $w_r=x_r+iy_r$ we have:
$$
\mu=-2t\sum_{r=1}^s\left(x^2_r+y^2_r\right).
$$
\noi Therefore, if $\mu=0$ implies $t=0$ since, by hypothesis, $\sum_{r=1}^s\left(x^2_r+y^2_r\right)$ is positive.\\

\noi Vectors of the form $v(T,\mu)$ which are in $\ker(\alpha)$ imply that $\mu=0$. Hence, if $w_r\not= 0$ we must have that $t=0$ also.\\

\noi When $w_r=0$ for all $r\in\{1,\dots,s\}$, the vectors of the form $v(T,\mu)$ with $T\in\C$ and $\mu\in\R$ are in $\ker (d\alpha)$. Hence $\ker (d\alpha)$ has real dimension three.\\

\noi On the other hand, on the set of points with $w_r=0$ for all $r\in\{1,\dots,s\}$, the vectors of the form
$$
-i\big(0,\dots,0,2\Re(T\lambda_1)z_1,\dots,2\Re(T\lambda_n)z_n\big), \;\;\; T\in\C
$$
\noi are all in $\ker(\alpha)\cap\ker (d\alpha)$. \\

\noi Hence $\ker (\alpha)\cap\ker (d\alpha)$ is a two dimemsional space. We will denote this $2$-dimensional vector space at the point $X=(0,\dots,0,z_1,\dots,z_n)$ by $\Pi(X)$.\\

\noi Therefore, in the set of points of $\ZLamss$ such that $w_r\not=0$ for all $r\in\{1,\dots,s\}$, the form $\alpha$ is a contact form.\\

\noi For a generic set of admissible configurations $\Lam=(\lambda_1,\dots,\lambda_n)$ the intersection of $\ZLamss$ with the complex hyperplanes  $\left\{w_1=0\right\},\dots,\left\{w_s=0\right\}$ is a submanifold $W$ of real codimension $2s$ of $\ZLamss$.

\subsection{Conductive Confoliations.}

\noi We introduce some definitions following  the ideas of S. J. Altschuler and L. F. Wu in \cite{AltWu}:

\begin{defi}
The space of \emph{conductive confoliations}, $Con\left(\ZLamss\right)$, is defined to be the subset of $\alpha\in \Lam^1\left(\ZLamss\right)$ such that 
\begin{itemize}
\item $\alpha$ is a positive confoliation: $\ast\left(\alpha\wedge (d\alpha)^{n+s-2}\right)\geqslant 0$, where $\ast$ denotes de Hodge operator;

\item every point $p\in \ZLamss$ is accessible from a contact point $q\in\ZLamss$ of $\alpha$: there is a smooth path $\gamma:[0,1]\to \ZLamss$ from $p$ to $q$ with $\gamma'(s)$ in the orthogonal complement of $\ker\left(\ast\left(\alpha\wedge (d\alpha)^{n+s-3}\right)\right)$ for  all $s$.
\end{itemize} 
\end{defi}

\begin{theo}[Theorem 2.8, \cite{AltWu}]\label{AW}
If $\alpha\in Con\left(\ZLamss\right)$ then $\alpha$ is $C^\infty$ close to a contact form.
\end{theo}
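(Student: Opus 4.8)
Theorem \ref{AW} is quoted verbatim from \cite{AltWu} (their Theorem 2.8), so the plan is not to devise a new argument but to recall the mechanism by which it is proved, since the role of the previous two subsections is precisely to place our forms $\alpha$ in the situation where it applies. Altschuler and Wu prove such a statement by a \emph{confoliation heat flow}: after fixing a Riemannian metric on $\ZLamss$ one evolves a path of $1$-forms $\alpha_t$ with $\alpha_0=\alpha$ by a degenerate--parabolic equation whose principal part is governed by $d\alpha_t$ and whose instantaneously smoothed solutions are contact forms. Short-time existence and uniqueness of $\alpha_t$, and continuous dependence on the initial datum, come from the standard parabolic package (a DeTurck-type gauge fixing to absorb the degeneracy coming from diffeomorphism invariance, followed by parabolic estimates); in particular one gets $\alpha_t\to\alpha$ in $C^\infty$ as $t\to 0^+$, which is what will deliver the asserted $C^\infty$-closeness once we know that $\alpha_t$ is contact for small $t>0$.

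The heart of the argument is a \emph{strong maximum principle}. The scalar density $u_t:=\ast\left(\alpha_t\wedge(d\alpha_t)^{n+s-2}\right)$ satisfies along the flow a scalar parabolic differential inequality $\partial_t u_t\geqslant \mathcal{L}_t u_t$, where $\mathcal{L}_t$ is a possibly degenerate second-order operator whose diffusion directions are, essentially, the orthogonal complement of $\ker\left(\ast(\alpha_t\wedge(d\alpha_t)^{n+s-3})\right)$ --- the very distribution appearing in the definition of conductivity. By hypothesis $u_0\geqslant 0$ (positive confoliation) and $u_0>0$ on the nonempty open set of contact points of $\alpha$. One then invokes the subelliptic strong maximum principle (Bony-type propagation of positivity): for every $t>0$, $u_t>0$ at every point that can be joined to a contact point of $\alpha$ by a path tangent to the diffusion distribution. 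But conductivity says that every point of $\ZLamss$ is so joinable, hence $u_t>0$ everywhere for all small $t$, i.e. $\alpha_t$ is a positive contact form; letting $t\to 0^+$ exhibits contact forms arbitrarily $C^\infty$-close to $\alpha$. This proves Theorem \ref{AW}.

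What remains genuinely to be done on our side --- and what I expect to be the only real obstacle --- is to check that the explicit $\alpha$ built from positive constants $a_r,b_j$ actually lies in $Con\left(\ZLamss\right)$. For positivity, $\ast(\alpha\wedge(d\alpha)^{n+s-2})\geqslant 0$, I would use the computations of Sections 6.1 and 6.2: off the locus $\Sigma:=\{w_1=\cdots=w_s=0\}$ the form $\alpha$ was shown to be contact, so with the correct global orientation the density is strictly positive there, while on $\Sigma$ it vanishes, and one checks that no sign change occurs. For conductivity one must join an arbitrary $p\in\Sigma$ to a contact point $q\notin\Sigma$ along the diffusion distribution; the natural candidate is a short path moving the $w$-coordinates transversally off $\Sigma$, and one must verify that such a direction is admissible, using the explicit description of $\Pi(X)=\ker\alpha\cap\ker d\alpha$ on $\Sigma$ obtained above together with the fact that, for generic $\Lam$, $\Sigma$ is a clean submanifold of codimension $2s$. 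Controlling the diffusion distribution precisely along $\Sigma$, where $d\alpha$ drops rank, is the delicate point; once $\alpha\in Con\left(\ZLamss\right)$ is established, Theorem \ref{AW} finishes the construction of a contact structure on $\ZLamss$.
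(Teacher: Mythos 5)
Your proposal is correct and matches the paper's treatment: Theorem \ref{AW} is imported verbatim from Altschuler--Wu and the paper offers no proof of it, so the real work lies exactly where you place it, namely in verifying that the explicit form $\alpha$ is a positive confoliation (the Proposition on the vanishing locus $W$) and that every point is accessible along Legendrian paths (the subsequent Lemma). Your sketch of the confoliation heat flow and the propagation-of-positivity maximum principle is a faithful account of the cited mechanism, and your identification of the delicate point --- controlling $\ker\alpha\cap\ker d\alpha$ along the codimension-$2s$ locus where $d\alpha$ drops rank --- is precisely what the paper's Lemma addresses.
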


\noi From the above and the fact that $W$ does not separate $\ZLamss$, since it is of codimension two, it follows:

\begin{prop}
 Let $\ast$ denote the Hodge operator for a given Riemannian metric on $\ZLamss$, then for the appropriate orientation of $\ZLamss$ one has that
$$
\ast\left(\alpha\wedge (d\alpha)^{n+s-2}\right)(X)>0,
$$
for $X\notin{W}$ and  if $X\in W$
$$
\ast\left(\alpha\wedge (d\alpha)^{n+s-2}\right)(X)=0.
$$
Therefore $\alpha$ is a positive confoliation on $\ZLamss$.
\end{prop}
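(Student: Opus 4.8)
The plan is to read the two displayed assertions off the pointwise linear algebra already carried out in Subsections 6.1 and 6.2, and then to promote \emph{nonvanishing} to \emph{positivity} by a connectedness argument. First observe that $\dim_{\R}\ZLamss=2n+2s-3$, so $\alpha\wedge(d\alpha)^{n+s-2}$ has top degree and $\ast\bigl(\alpha\wedge(d\alpha)^{n+s-2}\bigr)$ is a smooth real function on $\ZLamss$. By the two preceding Lemmas $\alpha$ never vanishes on $T\ZLamss$, so at each $X$ the subspace $\ker\alpha_X\subset\tange_X\ZLamss$ has the even dimension $2(n+s-2)$, and one has $\bigl(\alpha\wedge(d\alpha)^{n+s-2}\bigr)_X\neq 0$ exactly when $\alpha$ is a contact form at $X$, which by the computations of Subsections 6.1 and 6.2 happens precisely when $\ker\alpha_X\cap\ker(d\alpha)_X=0$, where $\ker(d\alpha)_X$ denotes the radical of $d\alpha$ restricted to $\tange_X\ZLamss$.

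Next I would run that dichotomy point by point. If $X\notin W$, the computations of Subsection 6.2 — made there for the points with $w_r\neq 0$ for all $r$, and extending to the whole of $\ZLamss\setminus W$ by the same elimination among the vectors $v(T,\mu)$, since a nonzero radical vector lying in $\ker\alpha_X$ would force $\mu=0$ and then $\sum_r|w_r|^2=0$, which is impossible off $W$ — give $\ker\alpha_X\cap\ker(d\alpha)_X=0$, hence $\bigl(\alpha\wedge(d\alpha)^{n+s-2}\bigr)_X\neq 0$ and $\ast\bigl(\alpha\wedge(d\alpha)^{n+s-2}\bigr)(X)\neq 0$. If $X\in W$, the same computations identify $\ker\alpha_X\cap\ker(d\alpha)_X$ with the $2$-plane $\Pi(X)$, so $d\alpha|_{\ker\alpha_X}$ has a radical of dimension at least two, its $(n+s-2)$-th exterior power vanishes, and therefore $\bigl(\alpha\wedge(d\alpha)^{n+s-2}\bigr)_X=0$; thus $\ast\bigl(\alpha\wedge(d\alpha)^{n+s-2}\bigr)$ vanishes identically on $W$.

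It remains to fix the sign. The function $\ast\bigl(\alpha\wedge(d\alpha)^{n+s-2}\bigr)$ is continuous and nowhere zero on $\ZLamss\setminus W$; since $\ZLamss$ is connected — by the theorem of \cite{GL} recalled at the start of this section it is a connected sum of products of spheres — and $W$ has real codimension at least two, $\ZLamss\setminus W$ is connected, so the function has constant sign there. Choosing the orientation of $\ZLamss$ for which that sign is positive then yields $\ast\bigl(\alpha\wedge(d\alpha)^{n+s-2}\bigr)(X)>0$ for $X\notin W$ and $=0$ for $X\in W$, whence $\ast\bigl(\alpha\wedge(d\alpha)^{n+s-2}\bigr)\geqslant 0$ and $\alpha$ is a positive confoliation. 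The step I expect to take the most care is precisely this last one: Subsections 6.1 and 6.2 only locate the contact locus (the open set $\ZLamss\setminus W$) and the degeneracy locus $W$ and say nothing about the sign of the top form, so ruling out a sign change is exactly where connectedness of $\ZLamss$ and the codimension-$\geqslant 2$ (hence non-separating) property of $W$ are indispensable; the only other point needing attention is the mild extension, for $s>1$, of the Subsection 6.2 contact computation to the points where only some of the $w_r$ vanish.
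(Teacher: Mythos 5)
Your proposal is correct and follows essentially the same route as the paper: the paper deduces the proposition directly from the pointwise computations of Subsections 6.1 and 6.2 (contact form off $W$, two-dimensional radical $\Pi(X)$ on $W$) together with the remark that $W$, being of codimension at least two, does not separate $\ZLamss$ --- which is precisely your connectedness/constant-sign step. The only difference is that you make explicit the sign-fixing and the extension of the $s>1$ computation to points where only some of the $w_r$ vanish, both of which the paper leaves implicit.
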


\noi We must show that  every point is accessible from a contact point of $\alpha$.
 
\begin{lema} Let $X\in{W}$. Then, there exists a smooth parametrized curve
$\gamma:[0,1]\to\ZLamss$ such that $\gamma(0)=X$, $\gamma(s)\notin{W}$ if $s\in(0,1]$ and $\gamma'(s)$ is a nonzero vector such that $\gamma'(s)\in\xi_\alpha(\gamma(s))$.
\end{lema}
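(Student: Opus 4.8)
The plan is to construct $\gamma$ explicitly by exploiting the $\mathbb{S}^1\times\cdots\times\mathbb{S}^1$-symmetry in the $w$-coordinates together with the description of $\Pi(X)=\ker\alpha\cap\ker d\alpha$ on $W$ obtained above. Fix $X=(0,\dots,0,z_1,\dots,z_n)\in W$. The key geometric fact already established is that at such a point $\ker d\alpha$ is $3$-dimensional and is spanned by the vectors $v(T,\mu)$ with $T\in\C$, $\mu\in\R$, while $\Pi(X)=\ker\alpha\cap\ker d\alpha$ is the $2$-dimensional subspace cut out by $\mu=0$, i.e. spanned by $-i(0,\dots,0,2\Re(T\lambda_1)z_1,\dots,2\Re(T\lambda_n)z_n)$, $T\in\C$. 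First I would observe that to leave $W$ we must move the $w$-variables off zero; so the natural first-order move is a vector whose $w$-component is nonzero. I claim that for an appropriate choice of direction in the $w$-plane — say $w_r(t)$ growing like $t\,c_r$ for suitable constants $c_r$, with the $z$-coordinates simultaneously rescaled and rotated to stay on $\ZLamss$ — the resulting curve has velocity lying in $\xi_\alpha(\gamma(s))$ for all $s$ and is transverse to $W$ at $s=0$.

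The steps, in order: (1) Write down the candidate curve. Because $W=\{w_1=\dots=w_s=0\}\cap\ZLamss$ has codimension $2s$, but we only need a curve transverse to $W$, I would take $w_r(t)=t\,\bar c_r$ for fixed complex constants $c_r$ (not all zero) chosen so that $F_s$ and $\rho_s$ can be corrected to first order, and correspondingly adjust $(z_1,\dots,z_n)$ by a $t$-dependent scaling $\sqrt{1-t^2\sum|c_r|^2}$ times a rotation that keeps $\sum\lambda_j|z_j|^2$ equal to $-\sum w_r(t)^2=-t^2\sum \bar c_r^2$; since the $\lambda_j$ are generic and $X$ satisfies weak hyperbolicity one can solve this perturbatively, e.g. by the implicit function theorem applied to the two real defining equations restricted to a suitable $2$-parameter family. (2) Compute $\gamma'(0)$ and check it is a nonzero vector in $T_X\ZLamss$ that is not tangent to $W$ — this is immediate from $\dot w_r(0)=\bar c_r\ne 0$ for some $r$. (3) Check $\gamma'(s)\in\xi_\alpha(\gamma(s))=\ker\alpha_{\gamma(s)}$ for every $s$. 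Evaluating $\alpha$ on $\gamma'(s)$ gives, up to the positive weights $a_r,b_j$ (taken $=1$), the expression $i\sum_r(w_r\dot{\bar w}_r-\bar w_r\dot w_r)+i\sum_j(z_j\dot{\bar z}_j-\bar z_j\dot z_j)$; with $w_r(t)=t\bar c_r$ the first sum vanishes identically (it is $i t(\bar c_r \bar{\dot{\bar c}}_r$-type terms that cancel since the $c_r$ are constant), and the second sum vanishes provided the rotation applied to the $z$'s is chosen to be radial-free, i.e. to preserve each $|z_j|^2$ along the curve — which is exactly the freedom we have, since the scaling handles $\rho_s$ and the constraint on $\sum\lambda_j|z_j|^2$ is $2$-real-dimensional while the torus $\prod_j\{|z_j|=\text{const}\}$ has dimension $n$, leaving room after genericity of $\Lam$. (4) Finally, by Sard/genericity of $\Lam$ arrange that $\gamma(s)\notin W$ for $s\in(0,1]$ (automatic once $w_r(s)\ne0$ for $s>0$, which holds for the linear choice) and that the curve stays smooth on $[0,1]$ after a trivial reparametrization so that it does not run off $\ZLamss$; rescale $t$ so the endpoint is at parameter $1$.

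The main obstacle I expect is step (3): producing a genuine curve (not just a first-order direction) whose velocity stays in $\ker\alpha$ for all $s$, while simultaneously remaining on the two constraints $F_s=0$, $\rho_s=1$. The naive linear-in-$t$ ansatz kills the $w$-contribution to $\alpha(\gamma')$ exactly, but keeping the $z$-contribution zero along the whole curve forces one to choose the path in the $z$-torus carefully, and the correction needed to stay on $\{F_s=0\}$ could a priori reintroduce a radial component in the $z_j$'s. Resolving this cleanly is where weak hyperbolicity of $\Lam$ and the genericity hypothesis on the configuration are used: they guarantee that the two real equations define a submanifold near $X$ transverse enough to the torus orbit that one can choose the lift of the $w$-motion into $\ZLamss$ whose $z$-part moves only by rotations (hence in $\Pi$-like directions) to leading order, and then a standard flow-box / integrating-factor argument along $\ker\alpha$ — or simply invoking that $\ker\alpha$ is a smooth corank-$1$ distribution away from nothing on $\ZLamss$ (it is honestly corank $1$ everywhere since $\alpha\ne0$, by the Lemma) so it is integrable to curves through any point in any admissible direction — upgrades the infinitesimal statement to the desired curve. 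I would present this last upgrade as the crux and keep the constraint-solving as a routine implicit-function-theorem verification.
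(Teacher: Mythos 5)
Your closing ``fallback'' is in fact essentially the paper's proof: one observes that $\xi_\alpha(X)$ has codimension $1$ in $\tange_{_X}\left(\ZLamss\right)$ while $T_XW$ has codimension $2s\geqslant 2$, so $\xi_\alpha(X)\not\subset T_XW$ and there is a nonzero vector in $\xi_\alpha(X)$ transverse to $W$; one extends it to a vector field with values in $\xi_\alpha$ by composing an arbitrary smooth extension with the pointwise orthogonal projection onto $\xi_\alpha$, and integrates, shrinking the time interval so that the integral curve stays in a neighborhood of $X$ in which it meets $W$ only at $s=0$. If you lead with that argument (and actually verify the transversality persistence rather than just asserting integrability ``in any admissible direction''), you recover the paper's proof.

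The explicit construction you present as your main argument, however, contains two concrete errors. First, a rotation $z_j\mapsto e^{i\theta_j}z_j$ preserves every $|z_j|^2$ and therefore cannot change $\sum_j\lambda_j|z_j|^2$; a global real scaling multiplies this sum, which equals $0$ at $X$, by a positive number and so also leaves it equal to $0$. Hence no combination of scaling and rotation can achieve $\sum_j\lambda_j|z_j|^2=-\sum_r w_r(t)^2\neq 0$: you must vary the moduli $|z_j|$ non-uniformly in $j$, and this is where regularity (weak hyperbolicity) is genuinely used, via the implicit function theorem. Second, your criterion for killing the $z$-contribution to $\alpha(\gamma')$ is backwards: in polar coordinates $\alpha=\sum_r 2a_r|w_r|^2\,d(\arg w_r)+\sum_j 2b_j|z_j|^2\,d(\arg z_j)$, so a motion preserving each $|z_j|^2$ but rotating the phases contributes $\sum_j 2b_j|z_j|^2\dot{\phi}_j$, which is generically nonzero, whereas a motion keeping every phase fixed and changing only the moduli is automatically Legendrian. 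The repair is the opposite of what you wrote: take $w_r(t)=t\bar{c}_r$ (constant arguments, so the $w$-terms of $\alpha(\gamma')$ vanish) and deform only the moduli, solving the three real equations $\sum_j\lambda_j|z_j|^2=-t^2\sum_r\bar{c}_r^{\,2}$ and $\sum_j|z_j|^2=1-t^2\sum_r|c_r|^2$ for $\left(|z_j|^2\right)_j$ by the implicit function theorem. With that correction your explicit route does work and is more concrete than the paper's soft extension-and-projection argument; as written, steps (1) and (3) fail.
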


\begin{proof}
\noi  Let us fix a Riemannian metric $g$. For $P\in W$ let $\vt(P)$ denote the $2$-dimensional subespace of $\tange\left(\ZLamss\right)$ which is orthogonal to $\tange_{_P}W$ at $P$.\\

\noi Let us first show  that there exists an open neighborhood $X\in\U\subset W$ of $X\in W$ and a smooth and non-vanishing vector field $\cv:\U\to \tange_{_X}\left(\ZLamss\right)$ defined on $\U$ such that

\begin{enumerate}
\item $\cv(X)\in\xi_\alpha(X)\cap \vt(X)$,
\item $\cv(P)\in \xi_{\alpha}(P)$ for all $P\in\U$, 
\item $\cv(P)\notin\tange_{_P}(W)$ for all $P\in\U$. 
\end{enumerate}

\noi Indeed, Let $\ls(X)=\vt(X)\cap\xi_\alpha(X)$. Then $\ls(X)$ has dimension two if $\vt(X)\subset\xi_\alpha(X)$ or $\ls(X)$ has dimension one if $\vt(X)$ is transverse to $\xi_\alpha(X)$.\\

\noi  Let $v_{_X}\in \ls(X)$ be a non zero vector. Extend this vector anchored at $X$ to a smooth vector field $\tilde{\cv}:\K\to\tange\left(\ZLamss\right)$ defined in a neighborhood $X\in\K\subset W$ of $X\in W$. Let $\pi_{_P}:\tange_{_P}\left(\ZLamss\right)\to\xi_\alpha(P)$ be the orthogonal projection. Consider the vector field defined on $\K$ by $\cv_1(P)=\pi_{_P}(\tilde{\cv}(P))$. Then $\cv_1$ is a smooth vector field and by continuity $\cv_1$ satisfies all the required properties in a possible smaller neighborhood $\U$.\\

\noi  To finish the proof of the lemma we have that by standard extension theorems (partition of unity) there exists an extension of $\cv_1$ to a nonsingular vector field $\cv_2:\V\to\tange_{_X}\left(\ZLamss\right)$ defined on an open neighborhood $X\in\V\subset \ZLamss$ of $X$. The vector field defined by $\cv(P)=\pi_{_P}(\cv_2(P))$ has the property that $\cv(P)\in\xi_\alpha(P)$.\\

\noi By multiplying the vector field $\cv$ by a positive constant $c>0$, if necessary, we can assume that all the solutions of the differential equation defined by the vector field $c\cv$ on $\V$ are defined in the interval $(-2,2)$.\\

\noi If $\gamma:[0,1]\to\U$ is the solution of the differential equation determined by $c\cv$ and satisfying the initial condition $\gamma(0)=X$ then this parametrized curve satisfies the requirements of the theorem \ref{AW} if $c$ is sufficiently small.
\end{proof}

\noi The proposition implies that every point of $W$ can be joined, by a Legendrian path of finite length, to a point where the form  $\ast\left(\alpha\wedge (d\alpha)^{n+s-2}\right)(X)$ is positive.\\ 

\noi Since $\alpha$ is a contact form on $\ZLamss-W$ we have that any two points of $\ZLamss-W$ can be joined by a Legendrian curve. Therefore every point of $\ZLamss$ is \emph{accessible} and  $\alpha$ defines a \emph{conductive confoliation} in the sense of J. S. Altschuler and L. F.  Wu (\cite{AltWu}).\\

\noi These forms are also called \emph{transitive confoliations} by Y. Eliashberg and W. P. Thurston \cite{ET}, since we can connect any point of the manifold  to a point where the form $\alpha$ is contact by a Legendrian path of finite length.\\

\noi Therefore applying Theorem \ref{AW} we can deform $\alpha$ to a contact form $\alpha'$.  Furthermore $\alpha'$ can be chosen arbitrarily close to $\alpha$ in the $C^\infty$ topology.  We have the theorem:

\begin{theo} For a generic set of admissible configurations $\Lam=\left( \lambda_1,\ldots, \lambda_n\right)$, the manifold $\ZLamss$ is a contact manifold. \end{theo}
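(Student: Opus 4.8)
The plan is to assemble the final theorem directly from the three ingredients already built up in Section 6: (i) the explicit $1$-form $\alpha$ and the computation of where it fails to be contact, (ii) the fact that $\alpha$ is a positive confoliation, and (iii) the accessibility lemma, and then invoke Theorem \ref{AW} of Altschuler--Wu. Concretely, I would first recall the key structural outcome of Subsections 6.1 and 6.2: after choosing the normalization $a_r=b_j=1$ (the general case being "completely analogous"), one shows by comparing coefficients that $\alpha_X$ is never the zero functional, that $\alpha$ is genuinely contact at every point with all $w_r\ne0$, and that the only locus where $\alpha\wedge(d\alpha)^{n+s-2}$ vanishes is the submanifold $W=\{w_1=\dots=w_s=0\}\cap\ZLamss$, which for a generic admissible $\Lam$ is a clean submanifold of real codimension $2s$ (for $s=1$, codimension two, and essentially the moment-angle manifold $\ZLamu$). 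This gives the first bullet in the definition of a conductive confoliation, i.e. the conclusion of the Proposition.

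Next I would address the second bullet, accessibility. For points of $\ZLamss\setminus W$ this is immediate: there $\alpha$ is a contact form, so its kernel is a genuine contact distribution and any two points can be joined by a Legendrian path (contact distributions are bracket-generating, hence give a totally non-holonomic distribution in the sense of Chow--Rashevskii). For points $X\in W$ I would invoke the accessibility Lemma: one constructs near $X$ a smooth nonvanishing vector field $\cv$ lying in $\xi_\alpha$, transverse to $W$, with $\cv(X)$ moreover inside the normal $2$-plane $\vt(X)$; rescaling $\cv$ so its flow is defined on $(-2,2)$, the integral curve $\gamma$ with $\gamma(0)=X$ leaves $W$ immediately and stays Legendrian, connecting $X$ to a point where $\alpha\wedge(d\alpha)^{n+s-2}>0$. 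Composing such a short Legendrian arc with a Legendrian path inside $\ZLamss\setminus W$ shows every point is accessible from a contact point; here the observation that $W$, being of positive codimension (in fact codimension $\ge2$), does not separate $\ZLamss$ is what lets the paths be concatenated. Hence $\alpha\in Con(\ZLamss)$.

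Finally I would conclude: since $\alpha$ is a conductive (equivalently transitive, in the terminology of Eliashberg--Thurston) confoliation, Theorem \ref{AW} applies and produces a contact form $\alpha'$ on $\ZLamss$ that is $C^\infty$-close to $\alpha$; in particular $\ZLamss$ carries a contact structure. I would state the genericity hypothesis explicitly (it is needed only to guarantee that $W$ is a submanifold, which enters the construction of $\cv$), and remark that the normalization $a_r=b_j=1$ loses nothing.

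The step I expect to be the main obstacle — and the one deserving the most care in writing — is the accessibility argument at points of $W$, i.e. producing the Legendrian curve that escapes $W$. The delicate points are: checking that $\ls(X)=\vt(X)\cap\xi_\alpha(X)$ is nonzero (it has dimension one or two, never zero, because $\vt(X)$ is $2$-dimensional and $\xi_\alpha(X)$ has codimension one in $\tange_X\ZLamss$), so that a transverse-to-$W$ Legendrian direction exists; verifying that the orthogonal projection of a locally extended vector field back into $\ker\alpha$ remains nonvanishing and transverse to $W$ on a possibly smaller neighborhood (a continuity/openness argument); and confirming that the resulting integral curve actually leaves $W$ for all $s\in(0,1]$ rather than staying tangent to it. Everything else is either a coefficient comparison already carried out in Subsections 6.1--6.2 or a direct citation of Theorem \ref{AW}.
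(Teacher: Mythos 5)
Your proposal follows essentially the same route as the paper: establish that $\alpha$ is nontrivial and contact away from $W$, show it is a positive confoliation vanishing exactly on $W$, prove accessibility via the Legendrian escape curve constructed from the projected vector field $\cv$, and conclude with Theorem \ref{AW} of Altschuler--Wu. The details you flag as delicate (nonvanishing of $\ls(X)$, the openness argument for the projected field, and the curve leaving $W$) are precisely the points the paper's accessibility lemma addresses, so the proposal is correct and matches the paper's argument.
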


\noi In other words:

\begin{theo}
Let $n>3$, $s\in\{1,\dots,n\}$ and let $\Lam$ be an admissible configuration. The manifolds 
$$\underset{j=1}{\overset{2\ell+1}{\sharp}}\left(\s^{2d_j+s-1}\times\s^{2n-2d_j+s-2}\right),$$
where $d_j=n_j+\dots+n_{j+\ell-1}$ admit contact structures.
\end{theo}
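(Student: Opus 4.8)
The plan is to deduce this final theorem directly from the immediately preceding results, so that almost all the work has already been done. First I would recall the diffeomorphism classification stated as Theorem~[\cite{GL}] in section~6: for $n>3$, $s\geqslant 1$, and $\Lam$ an admissible configuration with $\lambda_1\in\C$, the manifold $\ZLamss$ defined by equations \eqref{E1}--\eqref{E2} is diffeomorphic to the connected sum
$$
\underset{j=1}{\overset{2\ell+1}{\sharp}}\left(\s^{2d_j+s-1}\times\s^{2n-2d_j+s-2}\right),\qquad d_j=n_j+\dots+n_{j+\ell-1}.
$$
So the target manifolds in the statement are precisely the $\ZLamss$, up to diffeomorphism. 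The point is that being a contact manifold is a diffeomorphism invariant: if $M$ is diffeomorphic to a contact manifold, then $M$ carries a contact structure (pull back the contact form along the diffeomorphism). Hence it suffices to produce a contact structure on $\ZLamss$ itself for generic $\Lam$.

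Next I would invoke the theorem established just above, namely that for a generic set of admissible configurations $\Lam=(\lambda_1,\dots,\lambda_n)$ the manifold $\ZLamss$ is a contact manifold. That statement was obtained via the conductive-confoliation argument: one starts with the explicit $1$-form
$$
\alpha = i\left[\sum_{r=1}^s a_r\left(w_rd\bar w_r-\bar w_rdw_r\right)+\sum_{j=1}^n b_j\left(z_jd\bar z_j-\bar z_jdz_j\right)\right],
$$
shows (Lemmas in 6.1--6.2) that $\alpha$ is everywhere nontrivial on $T\left(\ZLamss\right)$ and is genuinely contact off the codimension-two submanifold $W=\ZLamss\cap\{w_1=\cdots=w_s=0\}$, verifies via the Proposition in 6.3 that $\ast\bigl(\alpha\wedge(d\alpha)^{n+s-2}\bigr)\geqslant 0$ with equality exactly on $W$ (so $\alpha$ is a positive confoliation), and then uses the Lemma in 6.3 to connect every point of $W$ to a contact point by a Legendrian path of finite length, so that $\alpha\in Con\left(\ZLamss\right)$. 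Theorem~\ref{AW} of Altschuler--Wu then deforms $\alpha$, by an arbitrarily small $C^\infty$ perturbation, into an honest contact form $\alpha'$ on $\ZLamss$.

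Combining the two ingredients gives the result: fix $n>3$ and $s\in\{1,\dots,n\}$ (the range $s\leqslant n$ is just to keep within the hypotheses used, the real dimension being $2n+2s-3$), and let $\Lam=(\lambda_1,\dots,\lambda_n)$ be admissible. Perturb $\Lam$ slightly to a generic admissible configuration $\Lam^\sharp$ — this does not change the combinatorial data $(n_1,\dots,n_{2\ell+1})$ and hence does not change the diffeomorphism type of $\ZLamss$, by the $\cite{GL}$ classification — so that the preceding theorem applies and $\ZLamss[\Lam^\sharp]$ carries a contact structure $\alpha'$. Since $\ZLamss[\Lam^\sharp]$ is diffeomorphic to $\underset{j=1}{\overset{2\ell+1}{\sharp}}\left(\s^{2d_j+s-1}\times\s^{2n-2d_j+s-2}\right)$, transporting $\alpha'$ through that diffeomorphism endows this connected sum with a contact structure, which is exactly the assertion.

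The only genuine subtlety — and hence the step I would treat most carefully — is the interplay between "generic" and the diffeomorphism type: I must make sure that an admissible configuration and a nearby \emph{generic} admissible configuration determine the \emph{same} cyclic partition $n=n_1+\dots+n_{2\ell+1}$, so that the perturbation needed to apply the confoliation theorem is invisible at the level of the topological model. This is true because the cyclic partition is a combinatorial (hence locally constant) invariant of an admissible configuration, and admissibility (weak hyperbolicity) is an open condition; a small perturbation staying inside the admissible locus preserves it. With that observed, everything else is a citation of results already proved in the excerpt, and no further computation is required.
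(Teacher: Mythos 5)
Your proposal is correct and follows essentially the same route as the paper: the paper introduces this theorem with the phrase ``In other words,'' i.e.\ it is obtained by combining the preceding theorem (that $\ZLamss$ is contact for generic admissible $\Lam$, via the conductive-confoliation argument and Theorem~\ref{AW}) with the diffeomorphism classification of $\ZLamss$ from \cite{GL}. Your additional remark that a small perturbation to a generic configuration preserves admissibility and the cyclic partition, hence the diffeomorphism type, is a point the paper leaves implicit, but it does not change the argument.
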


\begin{rmk}\label{rmk4}
 It was shown by C. Meckert \cite{Meckert} that the connected sum of contact manifolds of the same dimension
is a contact manifold. It was pointed to us by Dishant Pancholi that this implies the manifolds $\ZLamss$ have a contact structure.\\

\noi Indeed, the manifolds $\ZLamss$ are connected sums of products of the form $\s^n\times\s^m$ with $n$ even and $m$ odd, and $n,m>2$. Whitout loss of generality, we suppose that $m>n$ (the other case is analogous) then $\s^m$ is an open book with binding $\s^{m-2}$ and page $\R^{m-1}$. Hence $\s^n\times\s^m$ is an open book with binding $\s^{m-2}\times\s^n$ and page $\R^{m-1}\times\s^{n}$. The page  $\R^{m-1}\times\s^n$ is paralellizable since it embeds as an open subset of $\R^{m+n-1}$, therefore, since $m+n-1$ is even it has an almost complex structure. Furthermore, by hypothesis, $2n\leq{n+m}$ hence by a theorem of Y. Eliashberg \cite{Elias} the page is Stein and is the interior of an compact manifold with contact boundary $\s^{m-2}\times\s^n$. Hence by  a theorem of E. Giroux \cite{Giroux} $\s^n\times\s^m$ is a contact manifold. However our construction is in some sense explicit  since it is the instantaneous  diffusion through the heat flow of an explicit 1-form which is a positive confoliation.\\
\end{rmk}

\begin{rmk}
\noi Another interesting fact is that the manifolds $\ZLamss$ also have an open book decomposition. However for  these open book decompositions there does not exist a contact form which is supported in the open book decomposition  like in Giroux's theorem 
because the pages are not Weinstein manifolds 
(i.e manifolds of dimension $2n$ with a Morse function with indices of critical points lesser or equal to $n$).
\end{rmk}

\section{Topology of Intersections of quadrics.}

In this last section we recall some old results about the topology of intersections of quadrics $Z(\Lam)$ and the ideas behind their proofs. We use these to proof a new result about the topology of the manifolds with boundary $Z_+(\Lam)$ 
from which Theorem  \ref{pageC} of section 2 follows.

\subsection{Homology of Intersections of quadrics.}

We recall here the results of \cite{SLM}, whose proofs are equally valid for  any intersection of quadrics and not only  for $k=2$:\\

\noi  Let $Z=Z(\Lam)\subset \R^n$ as before, $\p$ its associated polytope  and $F_1,\dots,F_n$ its facets obtained by intersecting $\p$ with the coordinate hyperplanes $x_i=0$ (some of which might be empty).\\

\noi Let $g_i$ be the reflection on the $i$-th coordinate hyperplane and for $J\subset\{1,\dots,n\}$ let $g_J$ be the composition of the $g_i$ with $i\in J$.\\

\noi Let also $F_J$ be the face obtained by intersecting the facets $F_i$ for $i\in  J$.\\

\noi The polytope $\p$, all its faces, and all their combined reflections on the coordinate hyperplanes form a cell decomposition of $Z$. Then the elements $g_J(F_L)$ are generators of the chain groups $C_\ast(Z)$, where to avoid repetitions one has to ask $J\cap L=\emptyset$ (since for any $i$ the intersection $g_i$ acts trivially on $F_i$).\\

\noi A more useful basis is given as follows: let $h_i=1-g_i$ and $h_J$ be  the product of the $h_i$ with $i\in J$. The elements $h_J(F_L)$ with $J\cap L=\emptyset$ are also a basis, with the advantage that $h_JC_\ast(Z)$ is a chain subcomplex for every $J$ and, since $h_i$ annihilates $F_i$ and all its subfaces, it can be identified with the chain complex $C_\ast(\p,\p_J)$, where $\p_J$ is the union of all the facets $F_i$ with $i\in J$. It follows that
$$
H_\ast(Z)\approx\oplus_JH_\ast(\p,\p_J).
$$

\noi For the manifold $Z_+$ we have the same elements, but we cannot reflect them
in the subspace $x_1=0$. This means we miss the classes $h_J(F_L)$ where $1\in J$ and we get\footnote{The retraction $Z\to Z_{_+}$ induces an epimorphism in homology and fundamental group.}
$$H_\ast(Z_{_+})\approx\oplus_{1\notin J} H_\ast(\p,\p_J).$$

\noi These splittings are consistent with the ones  derived from the homotopy splitting of $\Sigma Z$ described in \cite{BBCG}. Even if it is not clear that they are the \emph{same} splitting, having two such with different geometric interpretations is most valuable. 

\subsection{Topology of $Z$ when $k=2$.}

Let us recall the results of \cite{SLM} for $k=2$. In this case, every intersection of quadrics is diffeomorphic to one of the following particular forms:\\

\noi Take $n=n_1+\dots+n_{2\ell+1}$ a partition of $n$, where we will not distinguish between a partition and any of its permutations preserving the cyclic order and we will think of the index $i$ in $n_i$ as an interger mod $2\ell+1$.  Corresponding to it we have the configuration $\Lam$ consisting of the $(2\ell+1)$-th roots of unity, the $i$-th one taken with multiplicity $n_i$. In other words, we have $\{1,\dots,n \}=J_1\cup\dots\cup J_{2\ell+1}$ in increasing order where the size of $J_i$ is $n_i$ and for every $j\in J_i$ one has $\lambda_j$ is the $i$-th of the $(2\ell+1)$-th roots of unity.

\begin{center}
\begin{figure}[h]
\centerline{\includegraphics[height=5cm]{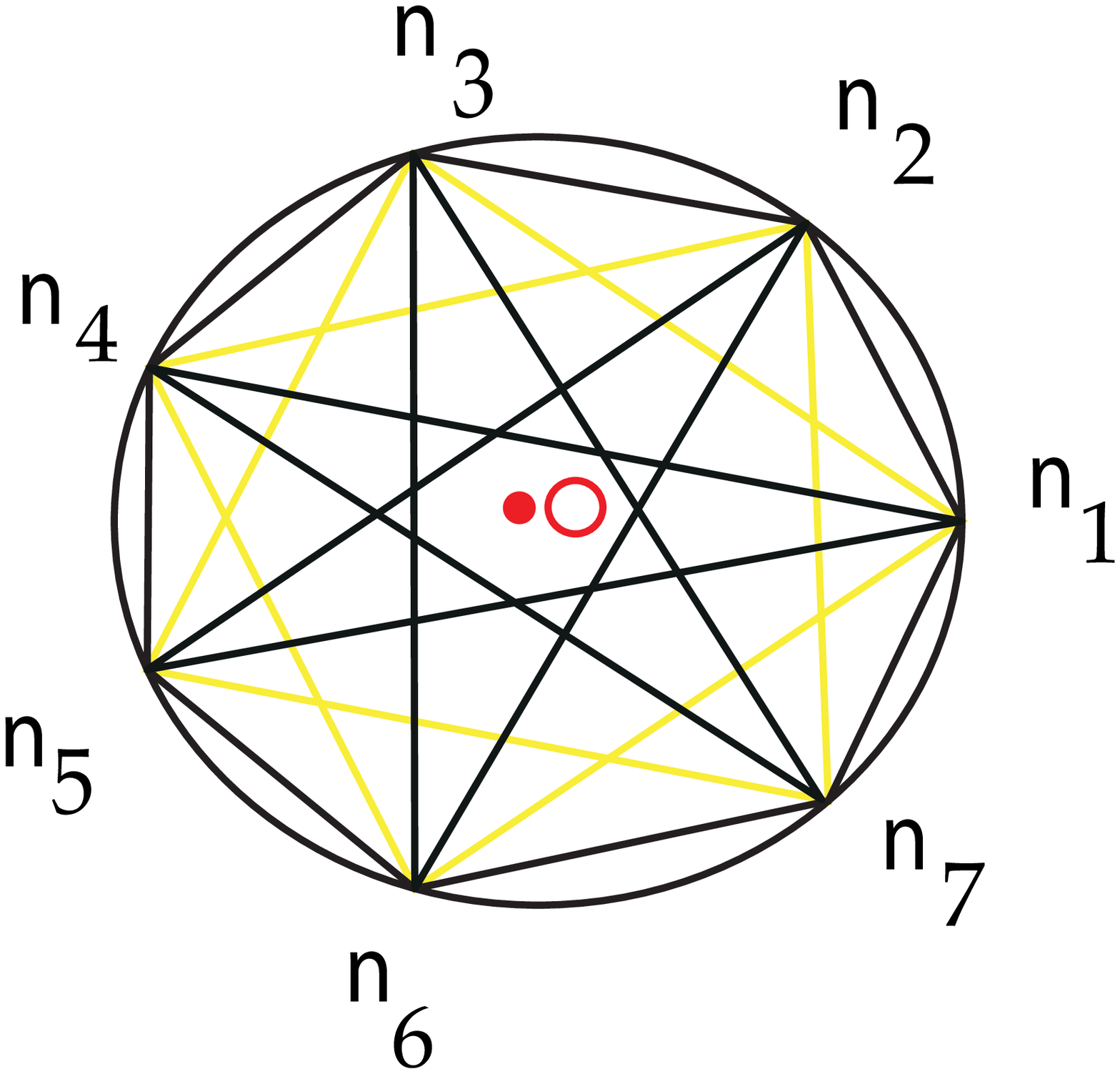}}
\end{figure}
\end{center}

\noi Any configuration can be deformed into one of these by concentrating in one point all the coefficients that is possible without breaking the weak hy\-per\-bo\-li-ci\-ty condition. We think of the $J_i$ as classes of points that can be joined this way.\\

\noi The pairs $(\p,\p_J)$ with non-trivial homology are those where $J$ consists of $\ell$ or $\ell+1$ consecutive classes, that is, those where $J$ is either one of  the $D_i=J_i\cup\dots\cup J_{i+\ell-1}$ or one of their complements $\tilde{D}_i$ in $\{1,\dots,n\}$. In those cases there is just one dimension where the homology is non-trivial and  it is infinite cyclic.

\begin{center}
\begin{figure}[h]
\centerline{\includegraphics[height=5cm]{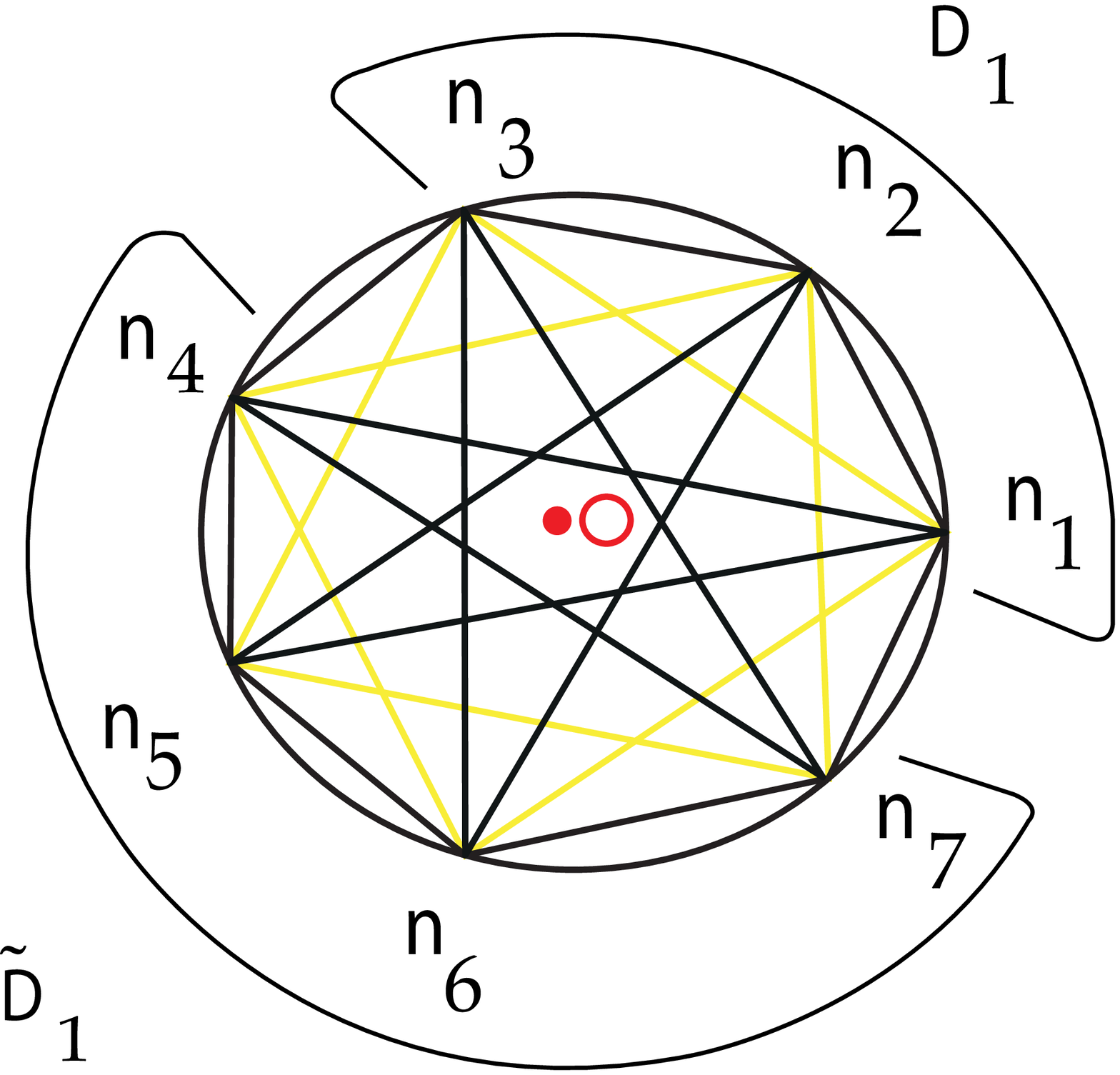}}
\end{figure}
\end{center}   

\noi  In the case of $D_i$ that homology group is in dimension $d_i-1$ where $d_i=n_i+\dots+n_{i+\ell-1}$ is the length of $D_i$. To specify a generator consider the cell $F_{L_i}$ where $L_i=\tilde{D}_i\backslash\left(\{j_{i-1}\}\cup\{j_{i+\ell}\}\right)$ and $j_{i-1}\in J_{i-1}$, $j_{i+\ell}\in J_{i+\ell}$ are any two indices in the extreme classes of $\tilde{D}_i$ (in other words, those contiguous to $D_i$).\\
 
 \noi $F_{L_i}$ is non empty of dimension $d_i-1$. It is not in $\p_{D_i}$, but its boundary is. Therefore it represents a homology class in $H_{d_i-1}(\p,\p_{D_i})$, which is actually a generator, and defines a generator $h_{D_i}F_{L_i}$ of $H_{d_i-1}(Z)$. Since $F_{L_i}$ has exactly $d_i$ facets it is a $d_i-1$-simplex so when reflected in all the coordinate subspaces containing those facets we obtain a sphere, which clearly represents $h_{D_i}F_{L_i}\in H_{d_i-1}(Z)$.\\
 
 \noi The class corresponding to $\tilde{D}_i$ is in dimension $n-d_i-2$ and is the Poincar\'e dual of the the one corresponding to $D_i$. One gets again easily a representative which in this case is not a sphere, but which can be turned into one with a good choice and a surgery if $\ell>1$. For our purpose we do not need to say more about this case.\\
 
 \noi The net result is that, if $\ell>1$, all the homology below the top dimension can be represented by embedded spheres\footnote{This also follows from \cite {LoGli}.} with trivial normal bundle which can be made disjoint inside $Z_+'$ and (since the inclusion $Z\subset Z_{_+}'$  induces an epimorphism in homology due to the homotopy equivalence $Z_+\subset Z_+'$) we represent all the classes in $H_\ast(Z_{_+}')$ by spheres, the $h$-cobordism theorem shows that this manifold is a connected sum along the boundary of manifolds of the form $\s^p\times\disc^{n-3-p}$. Then its boundary $Z$ is a connected sum of spheres products. Knowing its homology we can tell the dimensions of those spheres:\\

\noi \emph{If $\ell>1$ and $Z$ is simply connected of dimension at least $5$, then\footnote{The result has recently been proved in \cite{GL} without the dimension and conectivity hypotheses}:}
$$
Z=\sum_{j=1}^{2\ell+1}\left(\s^{d_i-1}\times\s^{n-d_i-2}\right).
$$
When $\ell=1$ a simple computation shows that
$$
Z=\s^{2n_1-1}\times\s^{2n_2-1}\times\s^{2n_3-1}.
$$

\subsection{Topology of $Z_+$ when $k=2$.}

The topology of $Z_+'$ is implicit in the above proof, and since any $Z$  with $n_1>1$ is such a $Z'$ so we have:\\

\noi \emph{If $Z_0$ is simply connected of dimension at least $5$, and $\ell>1$, $n_1>1$ then:}
$$
Z_+=\coprod_{i=2}^{\ell+2}\left(s^{d_i-1}\times\disc^{n-d_i-2}\right)\coprod\coprod_{i=\ell+3}^1\left(\disc^{d_i-1}\times\s^{n-d_i-2}\right).
$$

\noi The case $n_1=1$ has to be considered separately. The difference in the topology of $Z_+$ between case $n>1$ and $n=1$ can be seen as follows:\\

\noi As mentioned before, in the first case the map $Z_0\to Z_+$ induces an epimorphism in homology.\\

\noi This is not the case for $n_1=1$. For example, take the case $5=1+1+1+1+1$. Here $Z_0$ consists of four copies of $\s^1$ while $Z_+$ is a torus minus four disks\footnote{In this case the polytope is a pentagon and the Euler characteristics of $Z$ and $Z_0$ follow easily by looking at its faces.} Or, equivalently, a sphere minus four disks (where all the homology comes from the boundary) with a handle attached that carries the homology not coming from the boundary.\\

\noi The main fact is that $Z_0$ is given by $2\ell-1$ classes, and has only $4\ell-2$ homology generators below the top dimension, only half of which survive in $Z_+$. While the latter manifold has $2\ell+1$ homology generators.\\

\noi To be more precise, the removal of the element $1\in I_1$ allows the opposite classes $I_{\ell+1}$ and $I_{\ell+2}$ to be joined into one without breaking the weak hyperbolicity condition.

\begin{center}
\begin{figure}[h]
\centerline{\includegraphics[height=5cm]{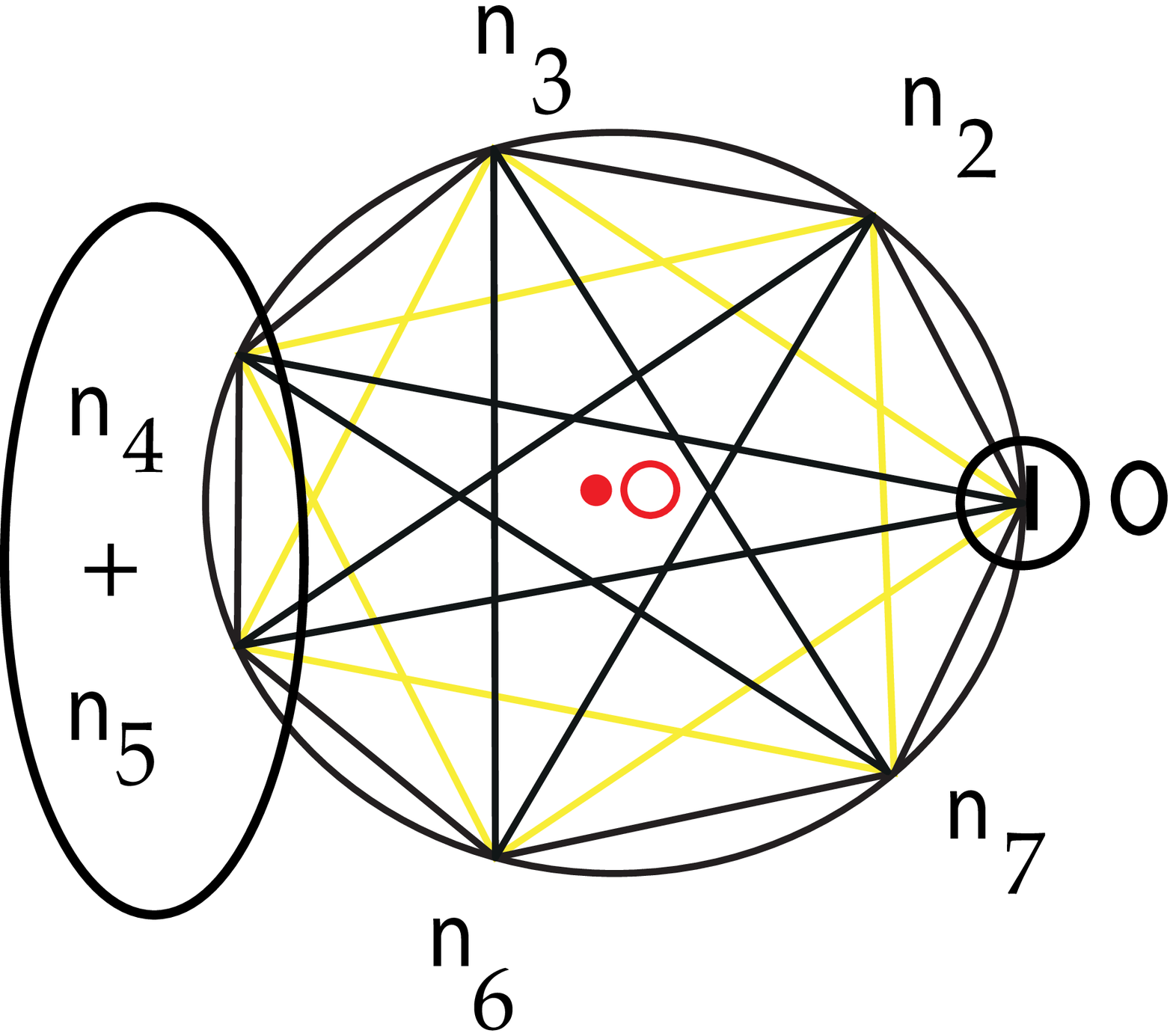}}
\end{figure}
\end{center}

\noi Therefore $Z_0$ has fewer such classes and $D_2=I_2\cup\dots\cup I_{\ell+1}$, which gives a generator of $H_\ast(Z_{_+})$, does not give anything in $H_\ast(Z_{_0})$ because there \emph{it is not a union of classes}  (it lacks the elements of $I_{\ell+2}$ to be so).\\

\noi The two classes in $H_\ast(Z_{_+})$ missing in $H_\ast(Z_{_0})$ are thus those corresponding to $J=D_2$ and $J=D_{\ell+2}$, all the others contain both $I_{\ell+1}$ and $I_{\ell+2}$ and thus live in $H_\ast(Z_{_0})$.\\

\noi As shown above, these two classes are represented by embedded spheres in $Z_+$ with trivial normal bundle built from the cells $F_{L_2}$ and $F_{L_{\ell+2}}$ by reflection, where
$$
L_2=\tilde{D}_2\backslash \left(\{1\}\cup\{j_{\ell+2}\right\}),1\in J_1,j_{d+2}\in J_{d+2}.
$$

$$
L_{\ell+2}=\tilde{D}_{\ell+2}\backslash \left(\{j_{\ell+1}\cup\{1\}\right), j_{\ell+1}\in J_{\ell+1}, 1\in J_{2\ell+2}=J_1.
$$

\noi $F_{L_2}\cap F_{L_{\ell+2}}$ is a single vertex $v$, all coordinates except $x_1$, $x_{j_{\ell+1}}$, $x_{j_{\ell+2}}$  being $0$.

\begin{center}
\begin{figure}[h]
\centerline{\includegraphics[height=5cm]{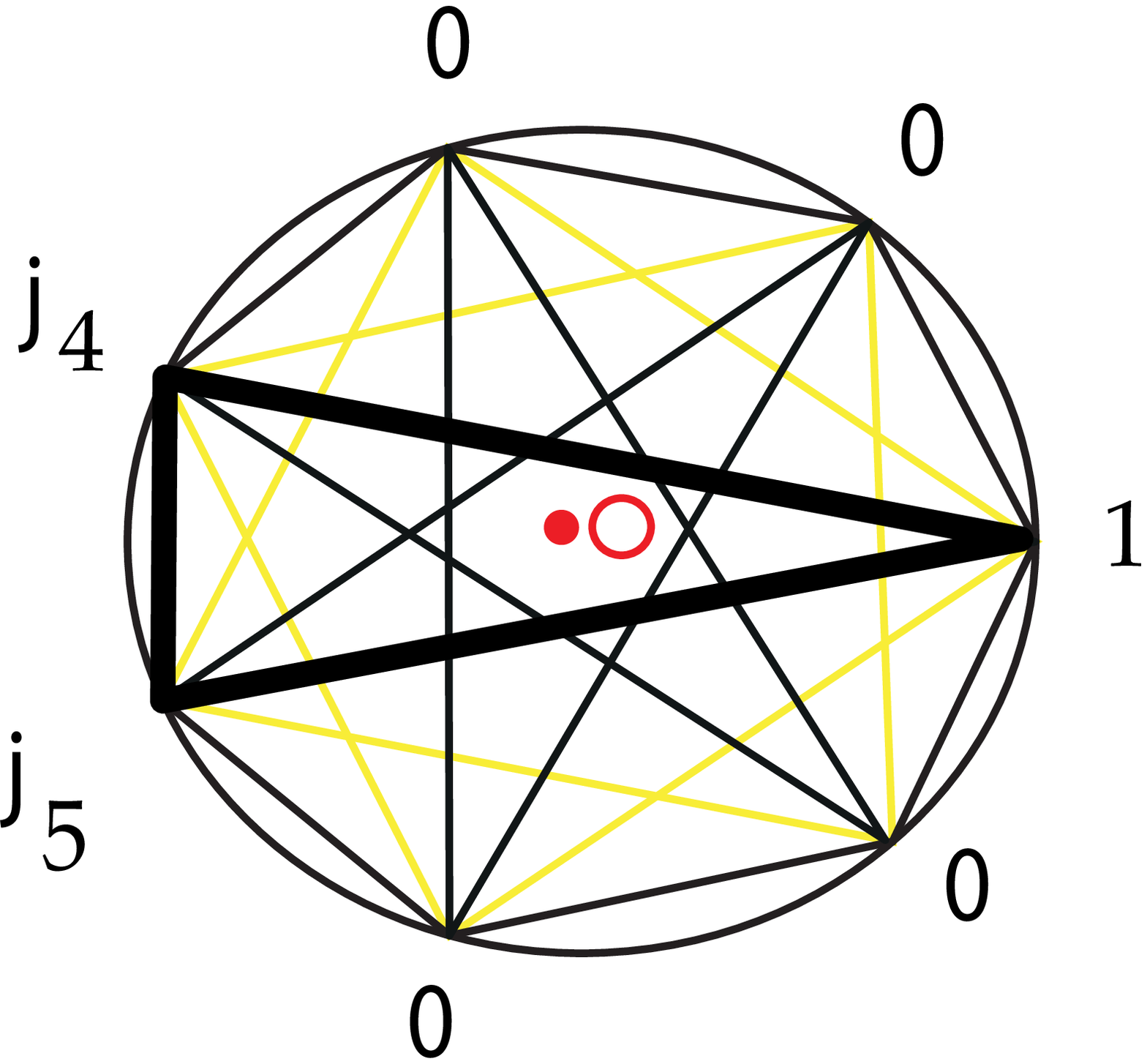}}
\end{figure}
\end{center}   
\noi The corresponding spheres are obtained by reflecting in the hyperplanes co\-rres\-pon\-ding to elements in $D_2$ and $D_{\ell+2}$, respectively. Since these sets are disjoint, the spheres also intersect in a single point.\\

\noi Now, a neighborhood of the vertex $v$ in $\p$ looks like the first orthant of $\R^{n-3}$ where the faces $F_{L_2}$ and $F_{L_{l+2}}$ correspond to complementary subspaces. When reflected in all the coordinates hyperplanes of $\R^{n-3}$, one obtains a neighborhood of $v$ in $Z_+$ where those subspaces produce neighborhoods   of the two spheres. Therefore the spheres intersect transversely in that point. (This shows again that those two classes do not come from the boundary $Z_0$: any homology class coming from the boundary can be separated from any other homology class in $Z_+$ and so has trivial homology intersection with it).\\

\noi A regular neighborhood of the union of those spheres is diffeomorphic to their product minus a disk, specifically to
$$
 \s^{d_2-1}\times\s^{d_{\ell+2}-1}\backslash \disc^{n-3}.
 $$

\noi If $\ell>2$ the rest of the classes coming from $Z_0$ can be represented again by disjoint products $\s^p\times\disc^{n-p-3}$ so finally the $h$-cobordism theorem gives\\

\noi \emph{If $Z$ is simply connected of dimension at least $6$, and $n_1=1$, $\ell>2$ then:}

$$
Z_{_+}=\coprod_{i=3}^{\ell+1}\left(\s^{d_i-1}\times\disc^{n-d_i-2}\right)\coprod\coprod_{i=\ell+3}^1\left (\disc^{d_i-1}\times\s^{n-d_i
-2}\right)
$$
$$
\coprod \left(\s^{d_2-1}\times\s^{d_{\ell+2}-1}\backslash \disc^{n-3}\right).
$$

\noi When $n_1=1$ and $\ell=2$ we have the additional complication that making $x_1=0$ we pass from the \emph{pentagonal} $Z_+$ to the \emph{triangular} $Z_0$ , which is not a connected sum but a product of three spheres and, furthermore, not all its homology below the middle dimension is spherical. All we can say from the above is that 
$$
Z_{_+}=\left(\s^{d_2-1}\times\s^{d_4-1}\backslash \disc^{n-3}\right)\;\sharp\; \tilde{\mathcal{E}}
$$ 
where the latter manifold carries all the homology coming from the boundary $Z_0$.\\

\noi We need to be more precise:\\

\noi The homology generators in $Z_+$ are those corresponding to $n_2+n_3$, $n_3+n_4$, $n_4+n_5$, $n_2+n_3+n_4$ and $n_3+n_4+n_5$. Out of these $n_2+n_3$ and $n_4 + n_5$ give us the handle which is not in $\te$, so the classes in this part are $n_3+n_4$, $n_2+n_3+n_4$ and $n_3+n_4+n_5$ all coming from its boundary, which is $Z_{_0}=\left(\s^{n_2-1}\times\s^{n_3+n_4-1}\times\s^{n_5-1}\right)$. They can all be killed by taking the union $\Sigma$ of $\te$ with $W=\left(\s^{n_2-1}\times\disc^{n_3+n_4}\times\s^{n_5-1}\right)$ along their common boundary.

\begin{center}
\begin{figure}[h]
\centerline{\includegraphics[height=5cm]{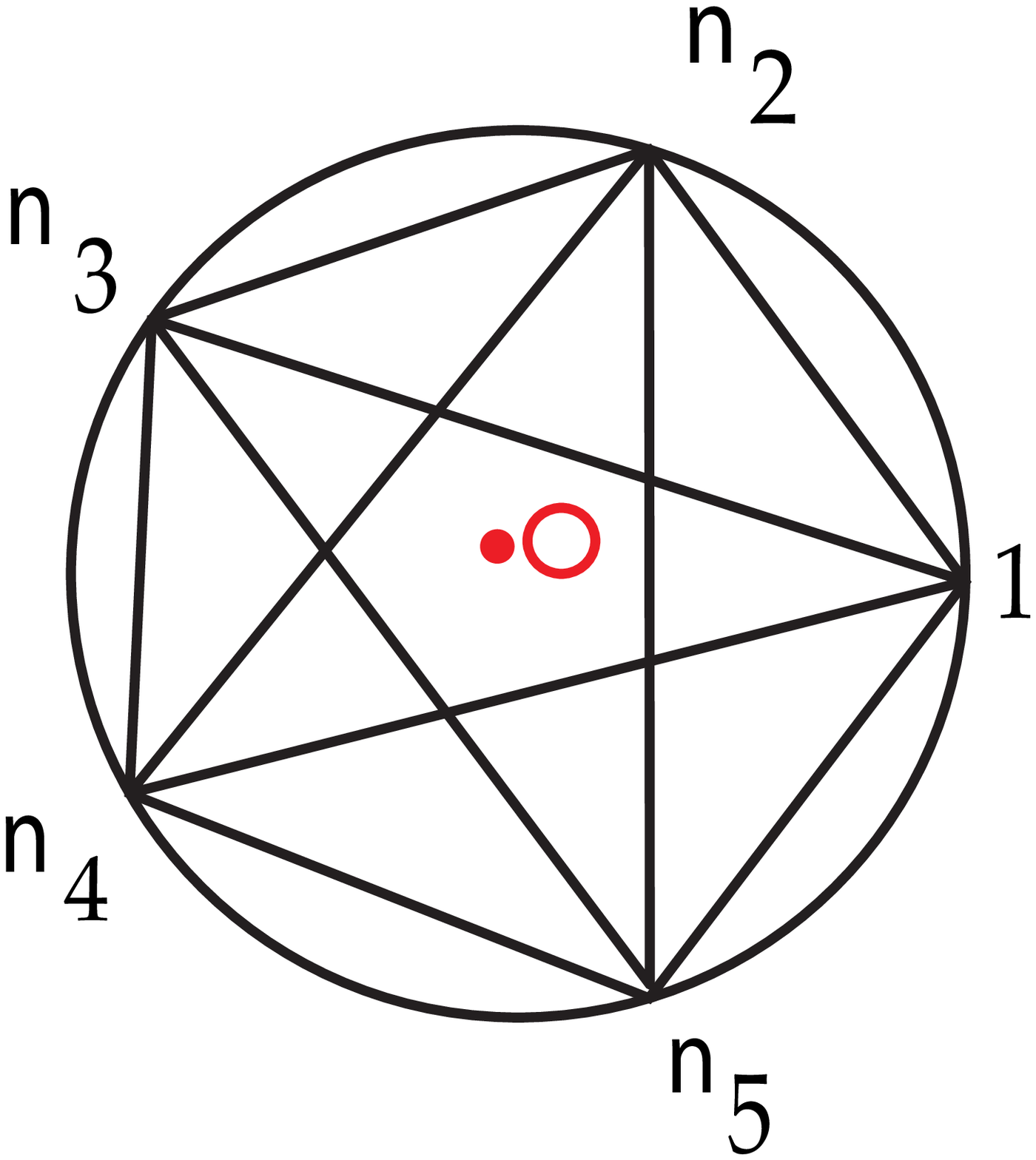}}
\end{figure}
\end{center}   

\noi $\Sigma$ is an $(n-3)$-manifold without boundary and ones sees from the  
above that the inclusions of $Z_0$ in $\te$ and $W$ induce an isomorphism below the top dimension
$$ 
H_\ast(Z_{_0})\approx H_\ast(\tilde{\mathcal{E}})\bigoplus H_\ast(W).
$$

\noi This implies (by the Mayer-Vietoris sequence) that $\Sigma$ is a homology sphere. It is simply-connected if $Z_+$ is so because this implies that 
$\te$ is so too and that $Z_0$ is connected.\\

\noi So $\Sigma$ is a homotopy sphere and $\te$ is the exterior of $W$ inside a homotopy sphere. Adding the inverse of that homotopy sphere at the interior of $\te$ does not change its diffeomorphism type but turns $\Sigma$ into a standard sphere. Therefore $\te$ is diffeomorphic to the exterior of a $\s^{n_2-1}\times\s^{n_5-1}$ in $\s^{n-3}$. It can be expected that this embedding can be assumed to be standard, but for the moment we have the following result:

\begin{theo}
Let $k=2$, and consider the manifold $Z$ corresponding to the cyclic decomposition $n=n_1+\dots+n_{2\ell+1}$ and the half manifold $Z_{_+}=Z\cap\{x_1\geqslant 0\}$. When $\ell>1$ assume $Z$ and $Z_{_0}=Z\cap\{x_1=0\}$ are simply connected and the dimension of $Z$ is at least $6$. Then $Z_+$ diffeomorphic to:
\begin{itemize}
\item [a)] If $\ell=1$, the product 
$$
\s^{n_2-1}\times\s^{n_3-1}\times\disc^{n_1-1}.
$$
\item [b)] If $\ell>1$ and $n_1>1$, the connected sum along the boundary of $2\ell+1$ manifolds:
$$
\coprod_{i=2}^{\ell+2}\left(\s^{d_i-1}\times\disc^{n-d_i-2}\right)\coprod
\coprod_{i=\ell+3}^1\left(\disc^{d_i-1}\times\s^{n-d_i-2}\right).
$$
\item [c)] If $n_1=1$ and $\ell>2$, the connected sum along the boundary of $2\ell$ manifolds:
$$
\coprod_{i=3}^{\ell+1}\left(\s^{d_i-1}\times\disc^{n-d_i-2}\right)\coprod\coprod_{i=\ell+3}^1\left(\disc^{d_i-1}\times\s^{n-d_i-2}\right)
$$
$$
\coprod\left(\s^{d_2-1}\times\s^{d_{\ell+2}-1}\backslash \disc^{n-3}\right).
$$
\item [d)] If $n_1=1$ and $\ell=2$, a connected sum along the boundary of two manifolds:
$$
\left(\s^{d_2-1}\times\s^{d_4-1}\backslash\disc^{n-3}\right)\coprod\te,
$$ 
where $\te$ is the exterior of an embedded $\s^{n_2-1}\times\s^{n_5-1}$ in $\s^{n-3}$.
\end{itemize}
\end{theo}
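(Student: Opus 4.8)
The plan is to read off the four cases from the splitting $H_\ast(Z_{_+})\cong\bigoplus_{1\notin J}H_\ast(\p,\p_J)$ recalled above, using the explicit spherical representatives of the surviving classes together with the $h$-cobordism theorem, and organizing everything according to how the suppression of the index $1\in J_1$ changes the list of those classes. Case (a) is immediate and needs no extra hypothesis: when $\ell=1$ the configuration consists of the three cube roots of unity with multiplicities $n_1,n_2,n_3$, and grouping the coordinates into the corresponding blocks $x=(y,z,w)$ with $y\in\R^{n_1}$, $z\in\R^{n_2}$, $w\in\R^{n_3}$, the first equation forces $|y|^2=|z|^2=|w|^2=\tfrac13$, so coordinate-wise $Z=\s^{n_1-1}\times\s^{n_2-1}\times\s^{n_3-1}$; since $1$ lies in the first block, intersecting with $\{x_1\geqslant0\}$ replaces that factor by the closed hemisphere $\{y_1\geqslant0\}\cap\s^{n_1-1}\cong\disc^{n_1-1}$ and leaves the other two factors untouched.

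Now suppose $\ell>1$. For $k=2$ the nonzero summands $H_\ast(\p,\p_J)$ are exactly those with $J$ a union of $\ell$ or $\ell+1$ consecutive classes, i.e.\ $J=D_i$ or $J=\tilde D_i$, and each is infinite cyclic; imposing $1\notin J$ leaves the classes $D_i$ with $i=2,\dots,\ell+2$ (in degree $d_i-1$) and $\tilde D_i$ with $i\in\{1,\ell+3,\dots,2\ell+1\}$ (in degree $n-d_i-2$), hence $2\ell+1$ generators, all below the top dimension. Following \cite{SLM}, each $D_i$ is represented by an embedded sphere with trivial normal bundle obtained by reflecting the $(d_i-1)$-simplex cell $F_{L_i}$ in the coordinate hyperplanes through its facets, and, after a surgery, each $\tilde D_i$ is likewise represented by such a sphere. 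If $n_1>1$ these cells can be chosen pairwise disjoint in $\mathrm{int}(Z_{_+})$, so a regular neighborhood $N$ of the union of the spheres is a connected sum along the boundary of the handles $\s^{d_i-1}\times\disc^{n-d_i-2}$ ($i=2,\dots,\ell+2$) and $\disc^{d_i-1}\times\s^{n-d_i-2}$ ($i\in\{1,\ell+3,\dots,2\ell+1\}$). Since these spheres form a basis of $H_\ast(Z_{_+})$, the inclusion $N\hookrightarrow Z_{_+}$ is a homology isomorphism and, by general position, a $\pi_1$-isomorphism, so $Z_{_+}\setminus\mathrm{int}\,N$ is a simply connected $h$-cobordism rel $\partial Z_{_+}$ ($Z_{_+}$ is simply connected because $Z\to Z_{_+}$ is onto on $\pi_1$); the $h$-cobordism theorem then gives $Z_{_+}\cong N$, which is (b). When $n_1=1$ the same argument works, except that now the cells $F_{L_2}$ and $F_{L_{\ell+2}}$ share a single vertex, so, as in the discussion preceding the statement, their spheres meet transversely in one point and a regular neighborhood of that pair is $\s^{d_2-1}\times\s^{d_{\ell+2}-1}\backslash\disc^{n-3}$; if moreover $\ell>2$, the remaining $2\ell-1$ classes (all coming from $Z_{_0}$) are again represented by spheres that can be made disjoint from this pair and from one another, and the $h$-cobordism argument gives (c).

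There remains case (d): $n_1=1$, $\ell=2$. The handle produced by the transverse pair $D_2,D_4$ still splits off as $\s^{d_2-1}\times\s^{d_4-1}\backslash\disc^{n-3}$, but now $Z_{_0}=\s^{n_2-1}\times\s^{n_3+n_4-1}\times\s^{n_5-1}$ is a triple product of spheres whose homology below the middle dimension is not entirely spherical, so the complementary summand $\te$ can only be pinned down indirectly. One checks that the three remaining classes of $Z_{_+}$ all come from $\partial\te=Z_{_0}$, and that gluing $\te$ to $W=\s^{n_2-1}\times\disc^{n_3+n_4}\times\s^{n_5-1}$ along $Z_{_0}$ gives, below the top dimension, $H_\ast(Z_{_0})\cong H_\ast(\te)\oplus H_\ast(W)$; by Mayer--Vietoris the closed manifold $\Sigma=\te\cup_{Z_{_0}}W$ is then a homology sphere, and it is simply connected once $Z_{_+}$ is, hence a homotopy sphere. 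Thus $\te$ is the exterior of the embedded $\s^{n_2-1}\times\s^{n_5-1}=\s^{n_2-1}\times\{0\}\times\s^{n_5-1}\subset W\subset\Sigma$, and, replacing $\Sigma$ by the standard sphere (by adding the inverse homotopy sphere in the interior of $\te$, which does not change the diffeomorphism type of $\te$, its boundary being nonempty), we present $\te$ as the exterior of an embedded $\s^{n_2-1}\times\s^{n_5-1}$ in $\s^{n-3}$, giving (d).

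The main obstacle is the transversality and general-position bookkeeping concealed in the phrases \emph{can be chosen pairwise disjoint}, \emph{after a surgery} and \emph{simply connected $h$-cobordism rel boundary}: one must check that the reflected cells are pairwise disjoint (or meet only in the single vertex claimed), that the non-spherical representatives of the $\tilde D_i$ can be surgered to embedded spheres with trivial normal bundle, and that the complement of the regular neighborhood is simply connected, and in the low-codimension cases all of this requires the combinatorial and surgical lemmas of \cite{SLM}. It is exactly here that the hypotheses $\dim Z\geqslant6$ (so that the relevant $h$-cobordisms are between manifolds of dimension $\geqslant5$) and $\pi_1(Z)=\pi_1(Z_{_0})=0$ are used.
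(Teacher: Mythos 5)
Your proposal is correct and follows essentially the same route as the paper: the splitting $H_\ast(Z_{_+})\cong\bigoplus_{1\notin J}H_\ast(\p,\p_J)$, spherical representatives obtained by reflecting the cells $F_{L_i}$, the transverse intersection at the single vertex $v$ when $n_1=1$ producing the $\s^{d_2-1}\times\s^{d_{\ell+2}-1}\backslash\disc^{n-3}$ summand, the $h$-cobordism theorem, and for case (d) the closed manifold $\Sigma=\te\cup_{Z_{_0}}W$ shown to be a homotopy sphere via Mayer--Vietoris. The only cosmetic differences are that you argue case (a) explicitly and run the regular-neighborhood argument directly on $Z_{_+}$ rather than quoting the description of $Z_{_+}'$ from the proof for $Z$; the technical points you flag at the end (disjointness, surgering the $\tilde D_i$ representatives, simple connectivity of the complement) are exactly the ones the paper defers to the lemmas of \cite{SLM}.
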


\noi This theorem clearly implies the following result for the topology of the page of $Z'$, which includes \ref{pageC}:
  
  \begin{theo}
  Let $k=2$, and consider the manifold $Z$ corresponding to the cyclic partition $n=n_1+\dots+n_{2\ell+1}$. When $\ell>1$ assume $Z$  and $Z_{_0}=Z\cap\{x_1=0\}$ are simply connected and the dimension of $Z$ is at least $6$. Consider the open book decomposition of $Z'$ given by theorem \ref{bookR}. Then the leaf of this decomposition is diffeomorphic to the interior of:
  \begin{itemize}
\item [a)] If $\ell=1$, the product 
$$
\s^{n_2-1}\times\s^{n_3-1}\times\disc^{n_1-1}.
$$

\item [b)] If $\ell>1$ and $n_1>1$, the connected sum along the boundary of $\ell+1$ manifolds:
$$
\coprod_{i=2}^{\ell+2}\left(\s^{d_i-1}\times\disc^{n-d_i-2}\right)\coprod\coprod_{i=\ell+3}^1\left(\disc^{d_i-1}\times\s^{n-d_i-2}\right).
$$

\item [c)] If $n_1=1$ and $\ell>2$, the connected sum along the boundary of $2\ell$ manifolds:
$$
\coprod_{i=3}^{\ell+1}\left(\s^{d_i-1}\times\disc^{n-d_i-2}\right)\coprod\coprod_{i=\ell+3}^1\left(\disc^{d_i-1}\times\s^{n-d_i-2}\right)$$
$$
\coprod\left(\s^{d_2-1}\times\s^{d_{\ell+2}-1}\backslash \disc^{n-3}\right).
$$

\item [d)] If $n_1=1$ and $\ell=2$, a connected sum along the boundary of two manifolds:
$$
\left(\s^{d_2-1}\times\s^{d_4-1}\backslash\disc^{n-3}\right)\coprod\te,
$$
 where $\te$ is the exterior of an embedded $\s^{n_2-1}\times\s^{n_5-1}$ in $\s^{n-3}$.
 \end{itemize}
  \end{theo}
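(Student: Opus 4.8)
\noi The plan is to read the statement directly off the preceding theorem, which already determines the half-manifold $Z_+=Z\cap\{x_1\geqslant 0\}$ up to diffeomorphism in each of the four cases. By Theorem \ref{bookR}, $Z'$ is an open book with binding $Z_0$, page $Z_+$ and trivial monodromy, so $Z'\cong\bar{\Sigma}(Z_+,\mathrm{id})$. Unwinding the conventions of Section 2, the leaf of this decomposition --- the fibre of the fibration $f\colon Z'\setminus Z_0\to\s^1$ --- is then exactly the interior of the compact page $Z_+$, the binding $Z_0$ being deleted by $f$; equivalently $Z'\setminus Z_0\cong\mathrm{int}(Z_+)\times\s^1$. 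Concretely this is visible through the $\s^1$-action on $Z'$ rotating the pair $(x_0,x_1)$, which has fixed set exactly $Z_0$, is free on the complement, and has quotient $Z_+$. So the statement is nothing but the preceding theorem with the words ``the interior of'' prepended to each of its four conclusions, and all that remains is to check the cases match.

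\noi This check is routine. For $\ell=1$ the page is the product $\s^{n_2-1}\times\s^{n_3-1}\times\disc^{n_1-1}$, whose interior is (a); for $\ell>1$ with $n_1>1$ it is the connected sum along the boundary displayed in (b); for $n_1=1$ with $\ell>2$ the one in (c); and for $n_1=1$ with $\ell=2$ it is $\left(\s^{d_2-1}\times\s^{d_4-1}\backslash\disc^{n-3}\right)\coprod\te$ of (d), with $\te$ the exterior of an embedded $\s^{n_2-1}\times\s^{n_5-1}$ in $\s^{n-3}$. In each case passing to the interior gives precisely the asserted leaf, and the standing hypotheses for $\ell>1$ (simple connectivity of $Z$ and of $Z_0$, and $\dim Z\geqslant 6$) carry over verbatim, since they are exactly those under which the preceding theorem describes $Z_+$.

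\noi Thus for this statement there is essentially nothing further to prove: all the real content sits in the preceding theorem --- the splitting $H_\ast(Z_+)\cong\oplus_{1\notin J}H_\ast(\p,\p_J)$, the representation of the surviving homology classes by disjoint embedded spheres with trivial normal bundle inside $Z_+'$, the delicate analysis needed when $n_1=1$ (and in particular the ``pentagonal versus triangular'' subtlety when moreover $\ell=2$), and the invocation of the $h$-cobordism theorem --- and that theorem I am entitled to assume. If pressed for the one place where care is genuinely required here, it is merely the observation that the leaf is the \emph{open} page $\mathrm{int}(Z_+)$ and not the compact manifold $Z_+$, because the binding $Z_0$ is removed by $f$; this is precisely why the conclusion reads ``diffeomorphic to the interior of''. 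Finally, I would note that applying the present theorem to the real intersection of quadrics underlying $Z^{^\C}$ --- in which every coefficient is repeated, so that $\lambda_1$ carries the odd multiplicity $2n_1-1$ --- recovers Theorem \ref{pageC} of Section 2.
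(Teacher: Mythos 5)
Your proposal is correct and follows exactly the route the paper takes: the paper derives this theorem as an immediate corollary of the preceding description of $Z_+=Z\cap\{x_1\geqslant 0\}$, the only content being that the leaf of the open book of Theorem \ref{bookR} is the interior of the page $Z_+$ (the binding $Z_0$ having been removed), so each of the four cases is read off verbatim. Your closing remark on how the doubled real configuration (with $\lambda_1$ of multiplicity $2n_1-1$) recovers Theorem \ref{pageC} is also precisely the paper's intended deduction.
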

  
\bigskip
\noi {\bf Acknowledgments.} We would like to thank professors Dishant Pancholi and Steven J. Altschuler for very useful comments.

\end{document}